\documentclass{article}

\usepackage{amsmath, bm, dsfont}
\usepackage{amsthm}
\usepackage{amssymb}
\usepackage[capitalise]{cleveref}
\usepackage{enumerate}
\usepackage[colorinlistoftodos]{todonotes}
\usepackage{graphicx}
\usepackage{algorithm}
\usepackage{algpseudocode}
\usepackage{tikz}
\usepackage{pgfplots}
\usetikzlibrary{patterns, shapes.geometric}
\usepackage{subcaption, makecell}
\usepackage{verbatim}
\usepackage{natbib}
\usepackage[margin=1.2in]{geometry}

\newtheorem{theorem}{Theorem}
\newtheorem{lemma}[theorem]{Lemma}
\newtheorem{remark}[theorem]{Remark}
\newtheorem{corollary}[theorem]{Corollary}

\newtheorem{definition}[theorem]{Definition}

\newtheorem{example}[theorem]{Example}

\DeclareMathOperator*{\argmax}{arg\,max}

\DeclareMathOperator*{\sgn}{sgn}

\title{Linear Model Extraction via Factual and Counterfactual Queries}
\author{
Daan Otto \\ \texttt{d.otto@uva.nl} \and
Jannis Kurtz \\ \texttt{j.kurtz@uva.nl} \and
Dick den Hertog \\ \texttt{d.denhertog@uva.nl} \and
Ilker Birbil \\ \texttt{s.i.birbil@uva.nl} \\
\vspace{0.5em}
Amsterdam Business School \\
University of Amsterdam \\
Amsterdam, 1001 NL, The Netherlands
}

\date{}

\begin{document}

\maketitle

\begin{abstract}
In model extraction attacks, the goal is to reveal the parameters of a black-box machine learning model by querying the model for a selected set of data points. Due to an increasing demand for explanations, this may involve counterfactual queries besides the typically considered factual queries. In this work, we consider linear models and three types of queries: factual, counterfactual, and robust counterfactual. First, for an arbitrary set of queries, we derive novel mathematical formulations for the classification regions for which the decision of the unknown model is known, without recovering any of the model parameters. Second, we derive bounds on the number of queries needed to extract the model's parameters for (robust) counterfactual queries under arbitrary norm-based distances. We show that the full model can be recovered using just a single counterfactual query when differentiable distance measures are employed. In contrast, when using polyhedral distances for instance, the number of required queries grows linearly with the dimension of the data space.
For robust counterfactuals, the latter number of queries doubles. Consequently, the applied distance function and robustness of counterfactuals have a significant impact on the model's security. 
\end{abstract}

\section{Introduction}
As machine learning models become progressively more prevalent in research and real-world applications, there is an increasing attention to their impact on privacy, security, and explainability. Recent work outlines attack techniques that threaten model security and privacy, such as model extraction \citep{rigaki2023survey}.
Model extraction aims to reconstruct a target black-box model by querying specific data points and using their outcomes to find the original model's parameters or train a surrogate model that replicates the original model's behavior. Model extraction attacks can jeopardize the model integrity and the intellectual property of the model owner. In combination with model inversion attacks \citep{fredrikson2015model} or attacks that reconstruct training data \citep{boenisch2023curious,ferry2024trained}, this poses privacy risks, which are especially relevant when models are trained on sensitive data, \textit{e.g.}, in medical or financial domains.

Besides, as machine learning models become more complex, their inner workings become less comprehensible to human users. This decrease in explainability leads to the continuously growing field of Explainable Artificial Intelligence (XAI), providing tools for explanations for a large variety of machine learning models, such as counterfactual explanations \citep{wachter2017counterfactual}. A counterfactual for a given factual instance is a (small) perturbation of the instance itself such that the decision of the model flips, answering the question: \textit{``In what situation would the outcome be B instead of A?''}. However, as counterfactual explanations can enhance transparency and trustworthiness, significant security risks are introduced, as the explanations can expose sensitive information about data and the underlying black-box model \citep{shokri2021privacy,nguyen2024survey,milli2019model,Oksuz_2024}. Attackers may exploit counterfactuals to extract the true model parameters  \citep{khouna2025counterfactuals}.

Model extraction methods using factual or counterfactual queries were already studied in several works. \citet{lowd2005adversarial} demonstrated that the parameters of linear classifiers on continuous data can be extracted within an $(1+\epsilon)$ factor using a polynomial number of factual queries. \citet{tramer2016stealing} extend on this and present model extraction attacks for other model classes, including logistic regression, neural networks, and decision trees. The authors consider the machine learning-as-a-service setting, where partial feature vectors can also be queried to obtain confidence values for the model predictions. Besides, \citet{reith2019efficiently} focus on model extraction of Support Vector Regression models using equation-solving attacks. By querying data points, they find a system of equations that find the model parameters. Another approach that has been studied tries to minimize the query cost by framing model extraction as an active learning problem \citep{chandrasekaran2020exploring,pal2020activethief}. 

Other works additionally consider counterfactual queries. In \cite{berning2024trade} and \cite{goethals2023privacy}, the authors describe the privacy issues when the counterfactual mechanism uses an actual existing data point, \textit{e.g.}, another person's data. These works present how a trade-off between privacy and the quality of counterfactual explanations can be made using an anonymization method. A different approach is presented by \citet{aivodji2020model} and consists of creating a dataset of factual instances, then querying a set of counterfactual queries to obtain a balanced dataset to train a surrogate model. In their work, \citet{wang2022dualcf} propose a two-step approach that uses counterfactuals of counterfactuals to obtain the parameters of a linear classifier. Here, they assume the counterfactuals are not lying on the decision boundary. They show the effectiveness of their model experimentally. However, no theoretical bounds on the number of queries needed to extract the original model are presented. \citet{dissanayake2024model} use the $\ell_2$-norm for the \textit{minimal edit} counterfactual to obtain polytope approximations of classifiers whose decision boundary is convex and has a continuous second derivative. The authors also discuss the reconstruction of neural networks with ReLU activation functions using counterfactual queries. 
Lastly, \citet{khouna2025counterfactuals} use counterfactual queries to reconstruct decision trees exactly and provide guarantees on the complexity of the number of queries required for model extraction. However, for many model classes, theoretical bounds on the required number of queries for complete model extraction are still not researched.

In this work, we focus on linear models and consider three types of queries: (i) factuals, (ii) exact counterfactuals, and (iii) exact robust counterfactuals. We study the research questions (i) how much arbitrary queries reveal about the classification regions, and (ii) how many queries are needed to recover the model parameters. By applying techniques from robust optimization, we first derive novel mathematical formulations for the classification regions for the situation where an arbitrary set of query results is given. This extends the current literature to the situation where hand-crafted queries cannot be performed. Second, we show that with a small number of targeted queries the exact model parameters can be recovered. The derived bounds extend the current literature by considering more general setups of counterfactuals, involving arbitrary norm-based distance functions, and robust counterfactuals. Our developed theory shows the effect of the distance function and the robustness of the counterfactuals on the number of required queries for model extraction.   

While the performance of linear models in machine learning is restricted due to their limited complexity, they are widely used because of their interpretability. In highly regulated fields like banking, there are often restrictions on using nonlinear predictors. Regulatory frameworks such as the BCBS \citep{BCBS}, the GDPR’s rules on automated decision-making \citep{GDPR}, and the SR 11-7 Guidance on Model Risk Management \citep{Fed117} emphasize explainability, documentation, and transparency, which favors the adoption of inherently interpretable models like linear regression. Besides, the simple structure of linear models allows us to derive precise mathematical formulations for the subsets of the classification regions and exact bounds on the number of (robust) counterfactual queries needed to extract the model parameters exactly. This analysis provides a foundation that may be extended to more complex classifiers in future work.

Our contributions consist of the following:
\begin{enumerate}
    \item We derive novel and computationally tractable characterizations for the data points for which we can detect the classification without querying the model again, when provided an arbitrary set of (i) factuals, (ii) exact counterfactuals, or (iii) exact robust counterfactuals.
    \item We extend on the current literature by providing upper bounds on the number of (robust) counterfactual queries needed to fully extract the linear classifier's parameters for general norm-based distance functions. Our results show that the choice of the distance measure for (robust) counterfactuals has a significant impact on the number of queries needed for model extraction.
\end{enumerate}

\section{Preliminaries}
In this work, we denote vectors in boldface. Subscripts are used to index vector components, while superscripts are used to index different vectors. We denote the standard basis vectors of $\mathbb{R}^p$ with $\bm{e}^1,\hdots,\bm{e}^p$. We use superscripts in parentheses, $\bm{x}^{(i)}$ to denote data points in a collection $\{\bm{x}^{(i)}\mid i\in I\}$.

Consider a trained classifier $h_{\bm{a},b}: \mathcal X \to \{-1,1\}$ which maps each data point $\bm{x}$ in the data space $\mathcal X\subseteq\mathbb{R}^p$ either to class $-1$ (\textit{`No'}) or $1$ (\textit{`Yes'}). Concretely, we consider linear classifiers given by the hyperplane of the form $\bm{a}^\top \bm{x} -b = 0$ such that
\[
h_{\bm{a},b} (\bm{x}) = \begin{cases}
    1, & \text{ if } \bm{a}^\top \bm{x} - b\ge 0; \\
    -1, & \text{ if } \bm{a}^\top \bm{x} - b< 0,
\end{cases}
\]
where $\bm{a}\in\mathbb{R}^p\setminus\{\bm{0}\}, b\in\mathbb{R}$. We denote $(\bm{a},b)$ as the parameter vector of the linear classifier $h$. Note that classification models involving non-linear transformations of a linear model of the form $f(\bm{a}^\top \bm{x})\ge \alpha$ fall in our framework if $f$ is monotonic and invertible, since in this case we can equivalently reformulate $\bm{a}^\top \bm{x} \ge f^{-1}(\alpha)$ and set $b:=f^{-1}(\alpha)$. Hence, all results in this work can be applied to logistic regression where $f$ is the sigmoid function.

We make the following general assumptions:
\begin{enumerate}
    \item[(A1)] We assume that $\mathcal{X}$ is the full real data space, \textit{i.e.}, $\mathcal{X}=\mathbb{R}^p$.
    \item[(A2)] We assume a non-zero $\bm{a}$, \textit{i.e.}, there exists an $i \in \{ 1,\ldots ,p\}$ with $a_i\neq 0$. Hence, both \textit{`Yes'}  and \textit{`No'} points exist. 
\end{enumerate}

Evidently, two hyperplanes given by $(\bm{a},b)$ and $(\hat{\bm{a}},\hat b)$ are equivalent iff there exists a non-zero scalar $\lambda$ such that $(\bm{a},b) = \lambda (\hat{\bm{a}}, \hat b)$. Hence, hyperplanes are invariant under scaling. 

\begin{definition}
[Equivalent Hyperplane]
Given a hyperplane with parameters $(\bm{a},b)$ such that $\bm{a}^\top \bm{x} -b = 0$, an equivalent hyperplane is given by $(\hat{\bm{a}},\hat b)$ such that $\hat{\bm{a}}^\top \bm{x}-\hat b=0$ if and only if $\bm{a}^\top \bm{x} -b = 0$.
\end{definition}

Next, we define the different query mechanisms we consider in this work, (i) factual, (ii) counterfactual, and (iii) robust counterfactual, which are also depicted in \cref{fig:defs}.

\begin{definition}
[Factual Query]
A factual query $q_F:\mathcal{X}\rightarrow\{0,1\}$ maps a data point $\bm{x}\in X$ to the label of the linear classifier, \textit{i.e.},  $q_F(\bm{x}) := h_{\bm{a},b} (\bm{x})$.
\end{definition}

A mechanism often used in XAI is the counterfactual query. This mechanism outputs a \textit{minimal edit} to a data point to get a desired output from the original model.

\begin{definition}[Counterfactual Query]
Given an arbitrary norm $\| \cdot \|_{N_1}$ (norm-1), a counterfactual (CF) query $q_{CF}:\mathcal{X}\rightarrow\mathcal{X}$ maps a data point $\bm{x}\in \mathcal{X}$ to an optimal solution $\bm{x}_{CF}^*$ of the problem
\begin{equation}\label{eq:CF_problem}
    \begin{aligned}
    \min_{\bm{x}_{CF}} & \ \| \bm{x}_{CF}-\bm{x}\|_{N_1} \\
    s.t. \quad & h_{\bm{a},b}(\bm{x})\neq h_{\bm{a},b} (\bm{x}_{CF}), \\
    & \bm{x}_{CF}\in \mathcal X.
\end{aligned}
\end{equation}
\end{definition}

Note that with slight abuse of notation, we use the minimum operator instead of the infimum operator in the latter problem. Since the region for points which are classified as \textit{`No'} is open, it may happen that the latter optimization problem has no optimal solution. However, in practical settings, a counterfactual is usually calculated over the closure of the region, both for points classified as \textit{`Yes'} or \textit{`No'}. Therefore, when defining the classification regions or calculating (robust) counterfactuals we will consider the closure of the classification regions.

A drawback of counterfactual queries is the lack of robustness; a counterfactual lies on a decision boundary, meaning it is highly sensitive to slight changes in the data point. To combat this problem, we also consider robust counterfactuals which were proposed in the literature, \textit{e.g.}, see \cite{maragno2024finding}.

\begin{definition}[Robust Counterfactual Query]
For a given robustness set $\mathcal S$ and an arbitrary norm  $\| \cdot \|_{N_1}$, a robust counterfactual (RCF) query $q_{RCF}:\mathcal{X}\rightarrow\mathcal{X}$ maps a data point $\bm{x}\in \mathcal{X}$ to an optimal solution $\bm{x}_{RCF}^*$ of the problem
\begin{equation}\label{eq:robust_CF_problem}
\begin{aligned}
    \min_{\bm{x}_{RCF}} & \ \| \bm{x}_{RCF}-\bm{x}\|_{N_1} \\
    s.t. \quad & h_{\bm{a},b}(\bm{x})\neq h_{\bm{a},b} (\bm{x}_{RCF} + \bm{s}) \quad \forall \bm{s}\in\mathcal S, \\
    & \bm{x}_{RCF}\in \mathcal X.
\end{aligned}
\end{equation}
\end{definition}
The definition of a robust counterfactual ensures that for each perturbation of the point by a point in the robustness set $\mathcal S$, the perturbed point remains a counterfactual. Note that a common class of robustness sets is the class of norm-balls of a given radius, where the norm does not have to coincide with the norm used in the objective function of problem \eqref{eq:robust_CF_problem}. To prevent confusion, we use $\|\cdot\|_{N_2}$ (norm-2) to define the robustness set, \textit{i.e.}, $\mathcal S:=\{\bm{s}\mid \|\bm{s}\|_{N_2}\leq \rho\}$ for $\rho>0$. Geometrically, a robust counterfactual is the closest point to the factual instance, such that the whole norm-2-ball around the point lies on the other side of the decision boundary; see Figure \ref{fig:defs}.

In the following, we distinguish between norms that are differentiable at any point $\bm{x}\in\mathcal{X}\setminus\{0\}$ and norms that do not have this property. We will respectively refer to these norms as differentiable norms and non-differentiable norms. Examples of differentiable norms are $\ell_p$-norms with $1<p<\infty$, while $\ell_p$-norms with $p\in \{ 1,\infty\}$ are examples for non-differentiable norms. 

\cref{tab:results} presents a summary of our key results on model extraction using the various query mechanisms introduced above. From this table, we can conclude that using a non-differentiable norm-1 preserves privacy more than a differentiable norm. Moreover, when using robust counterfactuals, we see that more queries are needed to extract the model's parameters than for regular counterfactuals. 

\begin{figure}[H]
   \centering
   \includegraphics[scale=0.4]{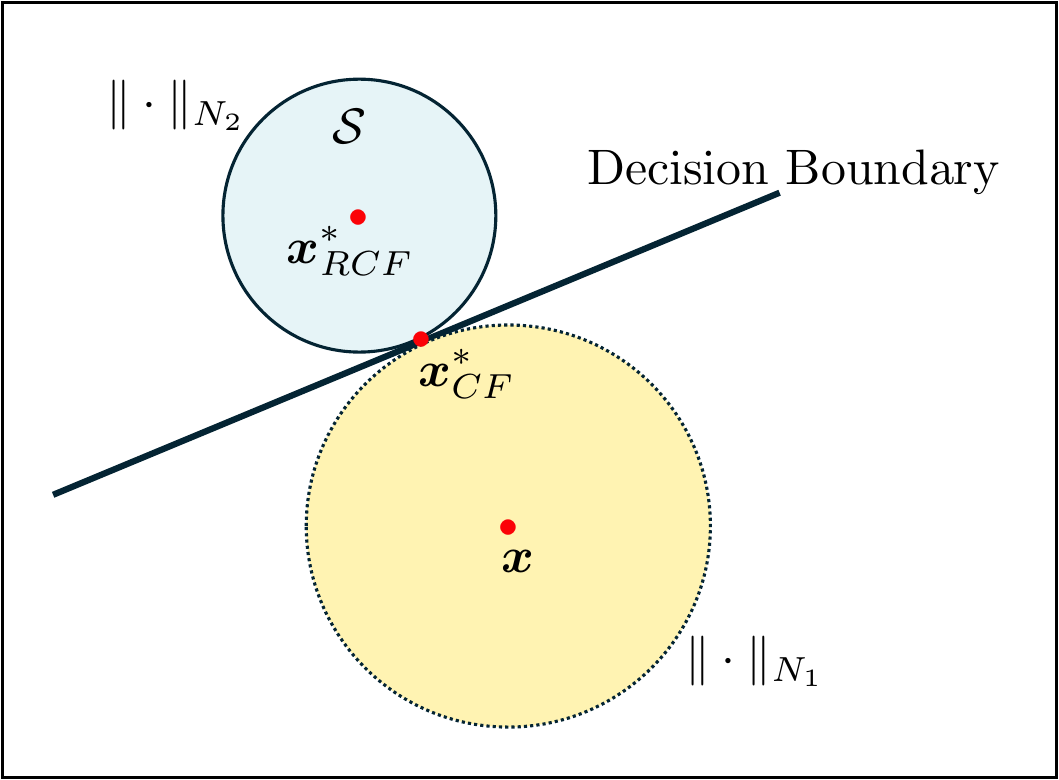}
   \caption{Illustration of the definitions.}
   \label{fig:defs}
\end{figure}

\begin{table}[H]
\begin{center}\resizebox{\textwidth}{!}{%
\begin{tabular}{|c|c||c|c|c|}  \hline
    Query Type & \makecell{Norm-1\\ differentiable} &Reconstruction & $\#$Queries & Result
\\ \hline \hline
    Factual &-& $\epsilon$-approx.& $\mathcal{O}(\log(\epsilon^{-1})+\text{size}(c))$ &\cite{lowd2005adversarial}  \\ \hline
    CF &Yes& Exact& 1 &\cref{thm:CF_theorem_diff}
\\ \hline
CF &No& Exact& $p+1$& \cref{thm:CF_nondiff}
\\ \hline
    RCF  &Yes & Exact& 1 RCF \& 1 Factual & \cref{thm:RCF_diff}
\\ \hline
    RCF  &No  & Exact& $p+1$ RCFs \& $p+1$ Factuals & Corollary \ref{crl:non_diff_RCF_main}\\ \hline
 \end{tabular}%
}
\caption{Information of hyperplane extraction by different query types. \cite{lowd2005adversarial} assume the model parameters have magnitude either $0$ or in $[2^{-c},2^c]$.}
\label{tab:results}
\end{center}
\end{table}

\section{Factual Queries}
In \cite{lowd2005adversarial}, it was shown that by smartly performing up to $\log (\epsilon^{-1})$ factual queries for the classifier $h_{\bm{a},b}$, the hyperplane parameters $\bm{a},b$ can be recovered up to relative accuracy of $\epsilon>0$. However, the latter result relies on the situation that we are able to query a potentially large number of points. On the other hand, if only an arbitrary number of query results is given and we do not have the possibility to perform the required amount of queries, the question remains whether we can already know the query outcome of certain points without querying the model. For example, if we have a set of points given for which we know the model returns the classification $1$, then we know that every point in the convex hull of the given points must be classified as $1$ as well. We next discuss how much information is extracted regarding the \textit{`Yes'} and \textit{`No'} regions by an arbitrary set of factual queried data points and show novel mathematical formulations for these regions which extend beyond the convex hull.

Suppose we have a set of data points $\{\bm{x}^{(i)}, i\in I\}$ where $I$ is the index set. Moreover, assume that the output of the factual query for $\bm{x}^{(i)}$ is \textit{`No'} for $i\in I_{0}$, and is \textit{`Yes'} for $i\in I_1$, where $I_{0} \cup I_1 = I$. We then get the following convex region (polyhedron) as possible values for the unknown parameters $\bm{a},b$:
\begin{equation}\label{eq:Uncertainty_ab_F}
  \mathcal{U}_{\bm{a},b}^F = \{(\bm{a},b) \;\; | \; \;\bm{a}^{\top}\bm{x}^{(i)}-b \leq0  \;  \forall i \in I_{0}, \quad  \bm{a}^{\top}\bm{x}^{(i)} -b\geq 0 \;  \forall i \in I_1 \}.
\end{equation}
We now consider the question: For a given data point $\bar{\bm{x}}\in\mathcal{X}$, can we already know --based on the information given by the factual queries-- whether the classifier yields a \textit{`Yes'} or \textit{`No'}?
Clearly, each point in the convex hull of $\bm{x}^{(i)}, \forall i \in I_{0}$, will yield a \textit{`No'}, and each point in the convex hull of $\bm{x}^{(i)}, \forall i \in I_1$, will yield a \textit{`Yes'}. 

However, we can show that there are many more data points for which we can detect their classification without querying the model again. We refer to all data points for which we can detect the classification will be \textit{`No'} without querying the model again as the \textit{`No'} region ($\mathcal{X}_{\textit{`No'}}$). Similarly, the \textit{`Yes'} region ($\mathcal{X}_{\textit{`Yes'}}$) refers to all data points for which we can detect that the classification will be \textit{`Yes'} without querying the model again. We can test whether we will obtain a \textit{`No'} for $\bar{\bm{x}}$ by solving the following linear optimization problem:
\begin{equation*} \max_{\bm{a},b} \{ \bm{a}^{\top}\bar{\bm{x}} -b\; : \: (\bm{a},b) \in \mathcal{U}_{\bm{a},b}^F  \}.\label{eq:no-rgn}
\end{equation*} 
If the optimal value of this problem is at most $0$, this means that for all possible $(\bm{a},b)\in \mathcal{U}_{\bm{a},b}^F$, the original classifier will output a \textit{`No'} for $\bar{\bm{x}}$. This results in the following convex set for the \textit{`No'} region:
\begin{equation}
  \mathcal{X}_{\textit{`No'}}:=\{ \bm{x}  \;\; | \;\;  \bm{a}^{\top}\bm{x} - b \leq 0 \quad \forall \bm{a},b\in \mathcal{U}_{\bm{a},b}^F \}=\{ \bm{x}  \;\; | \;\;  \max_{\bm{a},b\in \mathcal{U}_{\bm{a},b}^F}\bm{a}^{\top}\bm{x} - b \leq 0  \}.\label{eq:no-rgn-opt}
\end{equation}
On the other hand, if there exists an $(\bm{a},b) \in \mathcal{U}_{\bm{a},b}^F$ such that $\bm{a}^{\top}\bar{\bm{x}} -b> 0$ we cannot know whether $\bar{\bm{x}}$ will be classified as \textit{`No'} without factual querying $\bar{\bm{x}}$.
Similarly, if the optimal value of the following linear optimization problem
\begin{equation*}
    \min_{\bm{a},b} \{ \bm{a}^{\top}\bar{\bm x} -b\; : \: (\bm{a},b) \in \mathcal{U}_{\bm{a},b}^F  \}\label{eq:yes-rgn}
\end{equation*} 
is larger than $0$, the original classifier will output \textit{`Yes'} for $\bar{\bm x}$. Moreover, note that the set of points, for which we know for sure that we will obtain a \textit{`Yes'} from the original classifier, is the following convex region: 
\begin{equation}
  \mathcal{X}_{\textit{`Yes'}}:=\{ \bm{x}  \;\; | \;\;  \bm{a}^{\top}\bm{x} - b \geq 0 \quad \forall \bm{a},b\in \mathcal{U}_{\bm{a},b}^F \}=\{ \bm{x}  \;\; | \;\;  \min_{\bm{a},b\in \mathcal{U}_{\bm{a},b}^F}\bm{a}^{\top}\bm{x} - b \geq 0  \}.\label{eq:yes-rgn-opt}
\end{equation}

By dualizing the optimization problems described in \eqref{eq:no-rgn-opt} and \eqref{eq:yes-rgn-opt}, we get different equivalent formulations of the \textit{`Yes'} and \textit{`No'} regions. In the following theorem, we derive such a mathematical formulation for the regions for which we can detect the classification without additional queries.
\begin{theorem}\label{thm:rgn_factuals}
    Given data points $\bm{x}^{(i)}$, $i\in I$ such that each $\bm{x}^{(i)}$ is classified as \textit{`No'} for $i\in I_{0}$ and as \textit{`Yes'} for $i\in I_1$, where $I_{0} \cup I_1 = I$. Then, the \textit{`No'} and \textit{`Yes'} regions are given by
    \begin{align*}
    \mathcal{X}_{\textit{`No'}} &= \left\{ \bm{x} \; | \;\; \exists \bm{u} \; : \;
\sum_{i\in I_{0}} u_i- \sum_{i\in I_1} u_i = 1, \; 
\sum_{i\in I_{0}} \bm{x}^{(i)}u_i- \sum_{i\in I_1} \bm{x}^{(i)}u_i = \bm{x}, \; 
 \bm{u}\geq \bm{0}\right\},
     \end{align*} 
and
\begin{align*}
    \mathcal{X}_{\textit{`Yes'}}&=\left\{ \bm{x} \; | \;\; \exists \bm{u} \; : \;
\sum_{i\in I_1} u_i- \sum_{i\in I_{0}} u_i = 1, \; 
\sum_{i\in I_1} \bm{x}^{(i)}u_i- \sum_{i\in I_{0}} \bm{x}^{(i)}u_i = \bm{x}, \; 
 \bm{u}\geq \bm{0}\right\},
    \end{align*} 
respectively.
\end{theorem}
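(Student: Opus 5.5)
The plan is to use LP duality (equivalently, Farkas' lemma) on the linear problem in \eqref{eq:no-rgn-opt}, and then obtain the \textit{`Yes'} region by symmetry. The key observation is that $\mathcal{U}_{\bm{a},b}^F$ in \eqref{eq:Uncertainty_ab_F} is defined only by homogeneous linear inequalities in $(\bm{a},b)$. It is therefore a polyhedral cone, and it contains $(\bm{0},0)$. For fixed $\bm{x}$, the inner problem $\max\{\bm{a}^\top\bm{x}-b : (\bm{a},b)\in\mathcal{U}_{\bm{a},b}^F\}$ is always feasible, its optimal value is at least $0$, and by homogeneity the value is either exactly $0$ or $+\infty$. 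Hence $\bm{x}\in\mathcal{X}_{\textit{`No'}}$ if and only if this LP is bounded, i.e.\ has optimal value $0$.

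First I will write the cone in standard form. Let $(\bm{a},b)$ be the variable and set
\[
\bm{m}^{(i)}:=(\bm{x}^{(i)},-1)\ \text{for } i\in I_0, \qquad \bm{m}^{(i)}:=-(\bm{x}^{(i)},-1)\ \text{for } i\in I_1 .
\]
Then $\mathcal{U}_{\bm{a},b}^F=\{(\bm{a},b): (\bm{m}^{(i)})^\top(\bm{a},b)\le 0\ \forall i\in I\}$, and the objective is $\bm{c}^\top(\bm{a},b)$ with $\bm{c}=(\bm{x},-1)$.

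Second, I will take the LP dual, $\min\{0 : \sum_{i\in I} u_i\bm{m}^{(i)}=\bm{c},\ \bm{u}\ge\bm{0}\}$. Since the primal is feasible, strong duality gives that the primal is bounded (value $0$) if and only if the dual is feasible. Equivalently, by Farkas' lemma, there is no $(\bm{a},b)$ in the cone with $\bm{c}^\top(\bm{a},b)>0$ if and only if $\bm{c}$ lies in the conic hull of the $\bm{m}^{(i)}$.

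Third, I will write out the dual feasibility condition componentwise. The first $p$ coordinates give
\[
\sum_{i\in I_0}\bm{x}^{(i)}u_i-\sum_{i\in I_1}\bm{x}^{(i)}u_i=\bm{x},
\]
and the last coordinate gives $-\sum_{i\in I_0}u_i+\sum_{i\in I_1}u_i=-1$, i.e.\ $\sum_{i\in I_0}u_i-\sum_{i\in I_1}u_i=1$. Together with $\bm{u}\ge\bm{0}$, this is exactly the stated description of $\mathcal{X}_{\textit{`No'}}$.

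For $\mathcal{X}_{\textit{`Yes'}}$, I will note that $\min_{\mathcal{U}}(\bm{a}^\top\bm{x}-b)\ge 0$ is the same as $\max_{\mathcal{U}}(-\bm{a}^\top\bm{x}+b)\le 0$. Repeating the argument with $\bm{c}=(-\bm{x},1)$ gives the same system with the roles of $I_0$ and $I_1$ swapped. This is equivalent to the analogous relabeling in the statement.

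The main obstacle is conceptual rather than computational: the primal LP need not have a finite optimum. The argument has to be phrased via boundedness of a feasible conic LP, or directly via Farkas' lemma, rather than by equating optimal values. I would state the Farkas alternative explicitly to avoid this issue.

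One further point must be handled: the paper defines $\mathcal{U}_{\bm{a},b}^F$ without the restriction $\bm{a}\neq\bm{0}$, and also with non-strict inequalities for the \textit{`No'} points. I will work with $\mathcal{U}_{\bm{a},b}^F$ exactly as defined in \eqref{eq:Uncertainty_ab_F}, so the regions are those of \eqref{eq:no-rgn-opt} and \eqref{eq:yes-rgn-opt}, and no closure or nondegeneracy argument is needed.
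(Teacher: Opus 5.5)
Your proposal is correct and takes essentially the same route as the paper: both dualize the inner LP over the homogeneous polyhedral cone $\mathcal{U}_{\bm{a},b}^F$ and read off each region as a dual-feasibility system, with the \textit{`Yes'} case obtained by flipping the sign of the objective. Your remark that the primal value is either $0$ or $+\infty$, so that $\bm{x}\in\mathcal{X}_{\textit{`No'}}$ is equivalent to dual feasibility via Farkas' lemma, makes rigorous a step that the paper's chain of ``$\iff$'' leaves implicit.
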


The results of \cref{thm:rgn_factuals} show that finding out whether a data point $\bar{\bm{x}}$ is in $\mathcal{X}_{\textit{`No'}}$ or $\mathcal{X}_{\textit{`Yes'}}$ is computationally tractable since this can be done by optimizing a trivial objective function over the feasible set described in \cref{thm:rgn_factuals} resulting in a linear optimization problem which can be solved by state-of-the-art optimization solvers efficiently. Moreover, we notice that if we set $u_i=0$, $\forall i\in I_1$, then the \textit{`No'} region boils down to the convex hull of the points $\bm{x}^{(i)}$, $i\in I_{0}$. An example of \textit{`Yes'} and \textit{`No'} regions is depicted in \cref{fig:ex_rgns_F}.
\begin{figure}[h]
    \centering
    \includegraphics[width=0.6\linewidth]{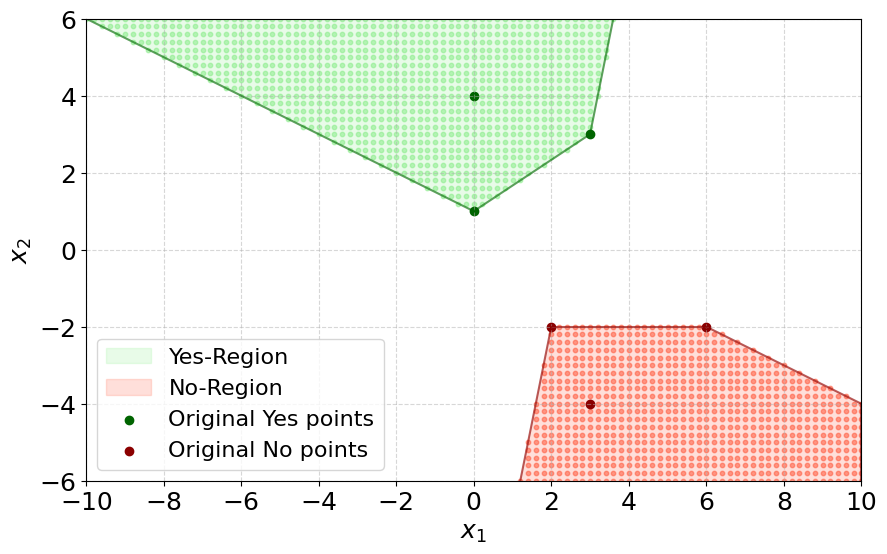}
    \caption{Example of \textit{`Yes'} and \textit{`No'} regions for a set of given factuals.}
        \label{fig:ex_rgns_F}
\end{figure}


\section{Counterfactual Queries}\label{sec:CF}
In this section, we first discuss how much information is extracted about the \textit{‘Yes’} and \textit{‘No’} regions by a set of factual and counterfactual data points. Afterwards, we examine how many counterfactual queries are needed to retrieve the original hyperplane exactly.

\subsection{Classification Regions}\label{sec:regions_CF}
Suppose that, besides factual queries, we also get (\textit{minimal edit}) counterfactuals for some points of our dataset. Concretely, for points $\bm{x}^{(j)}$ from an index subset $j\in J_{0}\cup J_1=J\subseteq I$ we get a counterfactual $\bm{x}_{CF}^{(j)}:=q_{CF}(\bm{x}^{(j)})$. Suppose the \textit{minimal edit} is with respect to a certain norm $\| \cdot\|_{N_1}$, and let $\rho_j = \| \bm{x}^{(j)} - \bm{x}_{CF}^{(j)}\|_{N_1}$. Then, we know that each point $\bm{z}$ in the ball with center $\bm{x}^{(j)}$ and radius $\rho_j$ will be classified similarly to $\bm{x}^{(j)}$, \textit{i.e.}, this ball lies on the same side of the hyperplane as $\bm{x}^{(j)}$. Hence, we know 
\begin{align}
    \bm{a}^{\top}\bm{z} -b&\leq 0  \qquad \forall \bm{z} : \| \bm{z} - \bm{x}^{(j)}\|_{N_1} \leq \rho_j \; \; &\forall j \in J_{0},\label{eq:CF_J0_0}\\
    \bm{a}^{\top}\bm{z} -b &\geq 0   \qquad 
  \forall \bm{z} : \| \bm{z} - \bm{x}^{(j)}\|_{N_1} \leq \rho_j&\; \; \forall j \in J_1.\label{eq:CF_J1_0}
\end{align}
Moreover, we know that a counterfactual has a classification that is opposite to its corresponding factual. Concretely, this means
\begin{align}
  \bm{a}^{\top}\bm{x}_{CF}^{(j)} -b &\geq 0  &\quad \forall j \in J_{0},  \label{eq:CF_J0_1} \\
  \bm{a}^{\top}\bm{x}_{CF}^{(j)} -b&\leq 0 \;  &\quad \forall j \in J_1.
  \label{eq:CF_J1_1}
\end{align}
Since constraints (\ref{eq:CF_J0_0}) and (\ref{eq:CF_J1_0}) also hold for $\bm{z}=\bm{x}_{CF}^{(j)}$, the inequalities in (\ref{eq:CF_J0_1}) and (\ref{eq:CF_J1_1}) can be replaced by equalities. With this extra information, we can characterize the uncertainty set of $\bm{a},b$ as $\mathcal{U}_{\bm{a},b}^{CF}$ given by the following set of constraints:
\begin{align*}
  \bm{a}^{\top}\bm{x}^{(i)} -b&\leq 0  &\quad \forall i \in I_{0},  \notag\\
  \bm{a}^{\top}\bm{x}^{(i)} -b&\geq 0  &\quad \forall i \in I_1, \notag\\
  \bm{a}^{\top}\bm{z} -b&\leq 0  \;\quad \forall \bm{z} : \| \bm{z} - \bm{x}^{(j)}\|_{N_1} \leq \rho_j  &\quad\forall j \in J_{0}, \\ 
  \bm{a}^{\top}\bm{z} -b &\geq 0 \;  \quad 
  \forall \bm{z} : \| \bm{z} - \bm{x}^{(j)}\|_{N_1} \leq \rho_j&\quad \forall j \in J_1, \\
  \bm{a}^{\top}\bm{x}_{CF}^{(j)} -b&= 0   &\quad \forall j \in J.
\end{align*}
Similar to the case with the factuals, we can write the \textit{`Yes'} and \textit{`No'} region as
\begin{align*}
    \mathcal{X}_{\textit{`Yes'}}&=\{ \bm{x}  \;\; | \;\;  \min_{\bm{a},b\in \mathcal{U}_{\bm{a},b}^{CF}}\bm{a}^{\top}\bm{x} - b \geq 0  \} &\text{and}&&
  \mathcal{X}_{\textit{`No'}}&=\{ \bm{x}  \;\; | \;\;  \max_{\bm{a},b\in \mathcal{U}_{\bm{a},b}^{CF}}\bm{a}^{\top}\bm{x} - b \leq 0  \},
\end{align*}
respectively.

Using this new formulation of the uncertainty set $\mathcal{U}_{\bm{a},b}^{CF}$, we need perspective functions to dualize the inner optimization problem to obtain a dual characterization for the \textit{`Yes'}  and \textit{`No'} regions. 

\begin{theorem}\label{thm:region-CF}
Consider a dataset of points $\bm{x}^{(i)}$ for $i\in I$ such that $q_F(\bm{x}^{(i)})=-1$ for all $i\in I_{0}$ and $q_F(\bm{x}^{(i)})=1$ for all $i\in I_1=I\setminus I_{0}$. Moreover, consider the points $\bm{x}_{CF}^{(j)}$ for $j\in J\subseteq I$ such that $\bm{x}_{CF}^{(j)}=q_{CF}(\bm{x}^{(j)})$. Let $J_{0}\subseteq I_{0}, J_1\subseteq I_1,$ and $J=J_{0}\cup J_1$. Then, the \textit{`No'}  and \textit{`Yes'} regions are characterized by the conic quadratic sets
\[ 
\mathcal{X}_{\textit{`No'}} =
\left\{ \begin{array}{lll}
\bm{x}  \mid \exists \bm{t},\bm{u},\bm{v},\bm{y}:& 
\displaystyle-\sum_{i\in I_{0}}t_{i} + \displaystyle\sum_{i\in I_1}t_{i}
-\displaystyle\sum_{j\in J}y_{b}
-\displaystyle\sum_{j\in J}v_{j} &= -1,\\  
&\displaystyle\sum_{i\in I_{0}}t_{i}\bm{x}^{(i)} - \displaystyle\sum_{i\in I_1}t_{i}\bm{x}^{(i)}+\displaystyle\sum_{j\in J}\bm{y}_a^{(j)}+\displaystyle\sum_{j\in J}v_{j}\bm{x}_{CF}^{(j)} &= \bm{x},\\
&u_j(\| \bm{y}_a^{(j)}/u_j - \bm{x}^{(j)}\|_{N_1}  - \rho_j)\leq  0&\forall j\in J_{0}, \\
 &u_j (y_b^{(j)}/u_j + 1)=0&\forall j\in J_{0},\\
 &u_j(\| \bm{y}_a^{(j)}/u_j + \bm{x}^{(j)}\|_{N_1} - \rho_j)\leq  0&\forall j\in J_1,\\
 &u_j (y_b^{(j)}/u_j - 1)=0&\forall j\in J_1,\\
 &\bm{y}^{(j)}=(\bm{y}_a^{(j)},y_b^{(j)})\in\mathbb{R}^{p+1}&\forall j\in J\\
 & \bm{t}\in\mathbb{R}^{|I|}_{\geq 0},\;\bm{u}\in\mathbb{R}^{|J|}_{\geq 0},\; \bm{v}\in\mathbb{R}^{|J|}.
 \end{array}\right\}
\]
and 
\[ \mathcal{X}_{\textit{`Yes'}} =
\left\{ \begin{array}{lll}
\bm{x}  \mid \exists \bm{t},\bm{u},\bm{v},\bm{y}:& 
\displaystyle-\sum_{i\in I_{0}}t_{i} + \displaystyle\sum_{i\in I_1}t_{i}
-\displaystyle\sum_{j\in J}y_{b}
-\displaystyle\sum_{j\in J}v_{j} &= 1,\\  
&\displaystyle\sum_{i\in I_{0}}t_{i}\bm{x}^{(i)} - \displaystyle\sum_{i\in I_1}t_{i}\bm{x}^{(i)}+\displaystyle\sum_{j\in J}\bm{y}_a^{(j)}+\displaystyle\sum_{j\in J}v_{j}\bm{x}_{CF}^{(j)} &= -\bm{x},\\
&u_j(\| \bm{y}_a^{(j)}/u_j - \bm{x}^{(j)}\|_{N_1}  - \rho_j)\leq  0&\forall j\in J_{0}, \\
 &u_j (y_b^{(j)}/u_j + 1)=0&\forall j\in J_{0},\\
 &u_j(\| \bm{y}_a^{(j)}/u_j + \bm{x}^{(j)}\|_{N_1} - \rho_j)\leq  0&\forall j\in J_1,\\
 &u_j (y_b^{(j)}/u_j - 1)=0&\forall j\in J_1,\\
 &\bm{y}^{(j)}=(\bm{y}_a^{(j)},y_b^{(j)})\in\mathbb{R}^{p+1}&\forall j\in J\\
 & \bm{t}\in\mathbb{R}^{|I|}_{\geq 0},\;\bm{u}\in\mathbb{R}^{|J|}_{\geq 0},\; \bm{v}\in\mathbb{R}^{|J|}.
 \end{array}\right\},
\]
respectively.
\end{theorem}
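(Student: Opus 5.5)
We treat membership $\bm{x}\in\mathcal{X}_{\textit{`No'}}$ as the condition $\max_{(\bm{a},b)\in\mathcal{U}^{CF}_{\bm{a},b}} \bm{a}^\top\bm{x}-b\le 0$, and $\mathcal{U}^{CF}_{\bm{a},b}$ is a cone in $(\bm{a},b)$. So the condition is equivalent to: the inequality $(\bm{x},-1)^\top(\bm{a},b)\le 0$ is valid on this cone. By conic Farkas duality, that holds iff $-(\bm{x},-1)$ lies in the (closure of the) dual cone of $\mathcal{U}^{CF}_{\bm{a},b}$. Our plan is to compute this dual cone constraint by constraint and match it to the stated certificate.

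\textbf{Step 1: write $\mathcal{U}^{CF}_{\bm{a},b}$ as an intersection of simple cones.} The constraints are of three kinds.
\begin{itemize}
\item Factual constraints: the halfspaces $-(\bm{x}^{(i)},-1)^\top(\bm{a},b)\ge 0$ for $i\in I_0$, and $(\bm{x}^{(i)},-1)^\top(\bm{a},b)\ge 0$ for $i\in I_1$. Their dual cones are the rays generated by these vectors, which gives the $t_i\ge 0$ terms.
\item Counterfactual equalities: $(\bm{x}^{(j)}_{CF},-1)^\top(\bm{a},b)=0$. These are hyperplanes through the origin, whose dual is a line. This gives free multipliers $v_j$.
\item Robust ball constraints: for $j\in J_0$, $\max_{\|\bm{z}-\bm{x}^{(j)}\|_{N_1}\le\rho_j}\bm{a}^\top\bm{z}-b\le 0$, i.e.\ $\bm{a}^\top\bm{x}^{(j)}+\rho_j\|\bm{a}\|_{*}-b\le0$, where $\|\cdot\|_*$ is the dual norm of $\|\cdot\|_{N_1}$. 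Analogously, for $j\in J_1$ the constraint is $\bm{a}^\top\bm{x}^{(j)}-\rho_j\|\bm{a}\|_*-b\ge 0$.
\end{itemize}
Each ball constraint defines a closed convex cone $K_j$.

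\textbf{Step 2: compute the dual cone of each $K_j$.} Write $K_j$ as $\{(\bm{a},b): \bm{a}^\top\bm{z}-b\le 0\ \forall \bm{z}\in B_j\}$, where $B_j$ is the ball. Its dual cone $K_j^*$ is then the closed conic hull of $\{-(\bm{z},-1):\bm{z}\in B_j\}$, i.e.\ the vectors $-u(\bm{z},-1)$ with $u\ge0$ and $\bm{z}\in B_j$. Setting $\bm{y}_a=-u\bm{z}$ and $y_b=u$ gives the perspective description
\[
u\bigl(\|\bm{y}_a/u+\bm{x}^{(j)}\|_{N_1}-\rho_j\bigr)\le 0,\qquad u(y_b/u-1)=0 .
\]
Sign conventions should be adjusted so that they match the displayed $J_0$/$J_1$ rows; those rows use $-\bm{x}^{(j)}$ and $+1$ for $J_0$. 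For $u=0$ the closure adds only the origin, since the ball is bounded; hence the perspective set is closed and no closure issues arise there.

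\textbf{Step 3: assemble via Farkas.} The dual of an intersection is the closure of the sum of the duals. Summing the generators and equating to $\mp(\bm{x},-1)$ gives the two linear equations: the $p$-dimensional one is the $\bm{x}$ row, and the scalar one is the $-1$ row. This yields exactly the sets in \cref{thm:region-CF}. The `Yes' region is identical with $(\bm{x},-1)$ replaced by its negative, which flips the right-hand sides to $1$ and $-\bm{x}$.

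\textbf{Main obstacle.} The difficulty is justifying strong duality without a closure, i.e.\ showing that the sum of the dual cones is closed. Our first approach is the observation that the true $(\bm{a},b)$ satisfies every factual and ball constraint, with the counterfactual equalities holding as given. A Slater-type condition for the nonlinear (ball) cones relative to the linear ones then suffices, by the mixed linear/conic Farkas lemma. Alternatively, polyhedrality of the linear part together with relative-interior arguments gives closedness. If that fails in degenerate cases, we would state the result with a closure and remark that it is attained under mild regularity.
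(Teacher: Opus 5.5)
\textbf{Comparison with the paper.} Your route is the paper's route in different language. The paper dualizes the inner maximization over $\mathcal{U}^{CF}_{\bm{a},b}$ by invoking ``primal worst equals dual best'' and then substitutes $\bm{y}^{(j)}=u_j\bm{z}^{(j)}$. That is exactly your polar-cone computation with the perspective description of the cone generated by $B_j\times\{-1\}$.

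\textbf{The gap.} It lies in the step you yourself flag, and your proposed Slater fix cannot work. Take $j\in J_0$ (the case $j\in J_1$ is symmetric). Every $(\bm{a},b)$ satisfying the counterfactual equality $\bm{a}^\top\bm{x}^{(j)}_{CF}=b$ satisfies
\[
\bm{a}^\top\bm{x}^{(j)}+\rho_j\|\bm{a}\|_*-b\;\ge\;\bm{a}^\top\bm{x}^{(j)}_{CF}-b\;=\;0 ,
\]
because $\|\bm{x}^{(j)}_{CF}-\bm{x}^{(j)}\|_{N_1}=\rho_j$. This holds for all such $(\bm{a},b)$, not only the true parameters, so the ball constraint is always active. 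Since $K_j$ has nonempty interior, the linear part of the system never meets $\mathrm{int}\,K_j=\mathrm{ri}\,K_j$. Hence no Slater point exists, and the relative-interior form of the mixed Farkas lemma fails for the same reason. Geometrically, the line $\mathbb{R}(\bm{x}^{(j)}_{CF},-1)$ contains a boundary ray of the cone generated by $B_j\times\{-1\}$. This is the textbook configuration (ice-cream cone plus a tangent line) in which a sum of closed cones is not closed.

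\textbf{A counterexample.} This situation is not degenerate: for non-polyhedral norms the stated equality is false. Take $p=2$, a single \textit{`No'} point $\bm{x}^{(1)}=\bm{0}$, $N_1=\ell_2$, and true parameters $\bm{a}=(1,0)$, $b=1$. Then $\bm{x}^{(1)}_{CF}=(1,0)$ and $\rho_1=1$. It follows that $\mathcal{U}^{CF}_{\bm{a},b}=\{(s,0,s):s\ge 0\}$ and $\mathcal{X}_{\textit{`No'}}=\{\bm{x}:x_1\le 1\}$. The certificate, however, reduces to the existence of $u,v$ with $\|\bm{x}-(v,0)\|_2\le u\le 1-v$. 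For $\bm{x}=(1,x_2)$ this forces $x_2=0$. So the displayed set is $\{x_1<1\}\cup\{(1,0)\}$, which misses, e.g., $(1,1)\in\mathcal{X}_{\textit{`No'}}$.

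\textbf{What can actually be proved.} For a general norm, your argument shows that the displayed set is contained in $\mathcal{X}_{\textit{`No'}}$ (weak duality). It also shows that $\bm{x}\in\mathcal{X}_{\textit{`No'}}$ if and only if $(\bm{x},-1)$ lies in the \emph{closure} of the sum of the cones. Exact equality does hold for polyhedral $N_1$, such as $\ell_1$ or $\ell_\infty$: then all cones are finitely generated, the sum is closed, and plain LP duality applies. The paper's proof uses the strong-duality result without checking its regularity hypothesis, so it shares this gap. The theorem needs either a closure or a restriction to polyhedral norms.

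\textbf{A smaller point.} Your derivation, like the last display of the paper's own proof, gives $+\sum_{j\in J}y_b^{(j)}$ in the scalar equation. The statement instead has $-\sum_{j\in J}y_b$, and taken literally that sign makes the set all of $\mathbb{R}^2$ in the example above. So ``exactly the sets in the theorem'' holds only after fixing this typo.
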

\cref{thm:region-CF} shows that finding out whether a data point $\bar{\bm{x}}$ is in $\mathcal{X}_{\textit{`No'}}$ or $\mathcal{X}_{\textit{`Yes'}}$ is computationally tractable, because of its conic quadratic formulation. To find out whether a data point $\bar{\bm{x}}$ is in $\mathcal{X}_{\textit{`No'}}$ or $\mathcal{X}_{\textit{`Yes'}}$, we can optimize a trivial objective function over the set of constraints as described in \cref{thm:region-CF}. The resulting conic quadratic optimization problem can be efficiently solved using state-of-the-art solvers. In Figure \ref{fig:ex_rgns_CF}, we show a two-dimensional visualization of the classification regions when there is one data point and a corresponding counterfactual, \textit{i.e.}, $|I|=|J|=1$. We consider three cases, when the counterfactual mechanism uses the (i) $\ell_1$-norm, (ii) $\ell_2$-norm, and (iii) $\ell_{\infty}$-norm. We see that for the non-differentiable norms ($\ell_1,\ell_{\infty}$) there are still areas for which we cannot conclude the classification. For the differentiable $\ell_2$-norm, however, it seems only one factual and corresponding counterfactual extracts the whole model. In the next section, we verify this hypothesis.

\begin{figure}[h]
     \centering
     \begin{subfigure}[b]{0.32\textwidth}
         \centering
         \includegraphics[width=\textwidth]{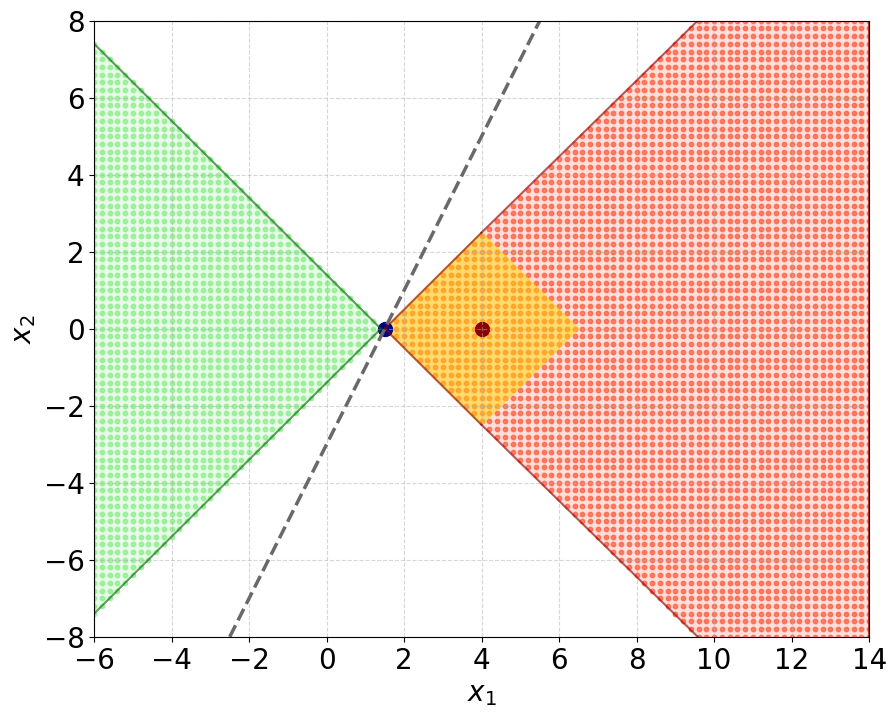}
         \caption{$N_1=\ell_1$.}
         \label{fig:ex_rgns_CF_L1}
     \end{subfigure}
     \hfill
     \begin{subfigure}[b]{0.32\textwidth}
         \centering
         \includegraphics[width=\textwidth]{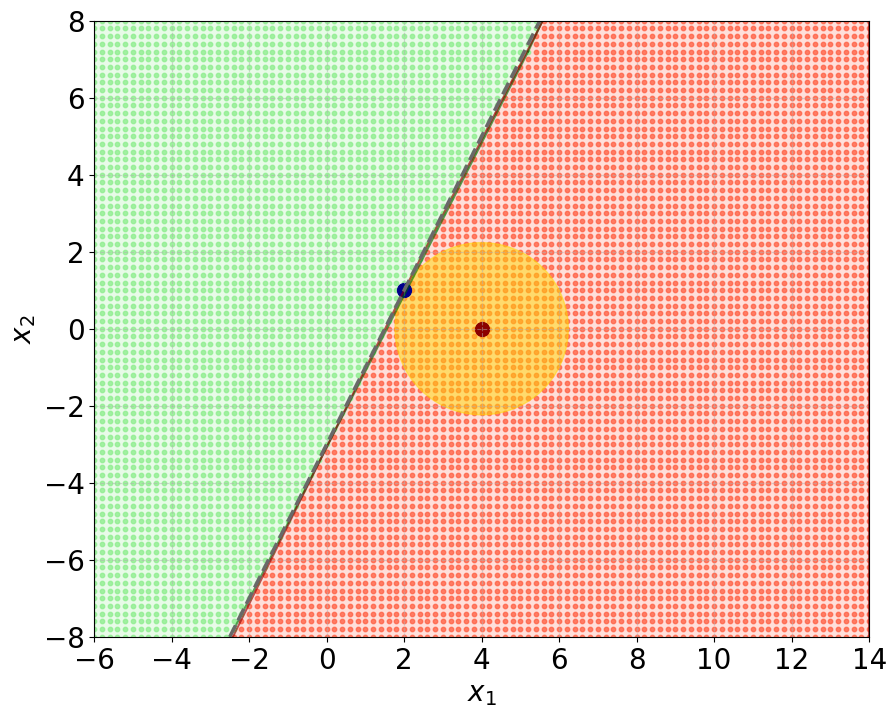}
         \caption{$N_1=\ell_2$.}
         \label{fig:ex_rgns_CF_L2}
     \end{subfigure}
     \hfill
     \begin{subfigure}[b]{0.32\textwidth}
         \centering
         \includegraphics[width=\textwidth]{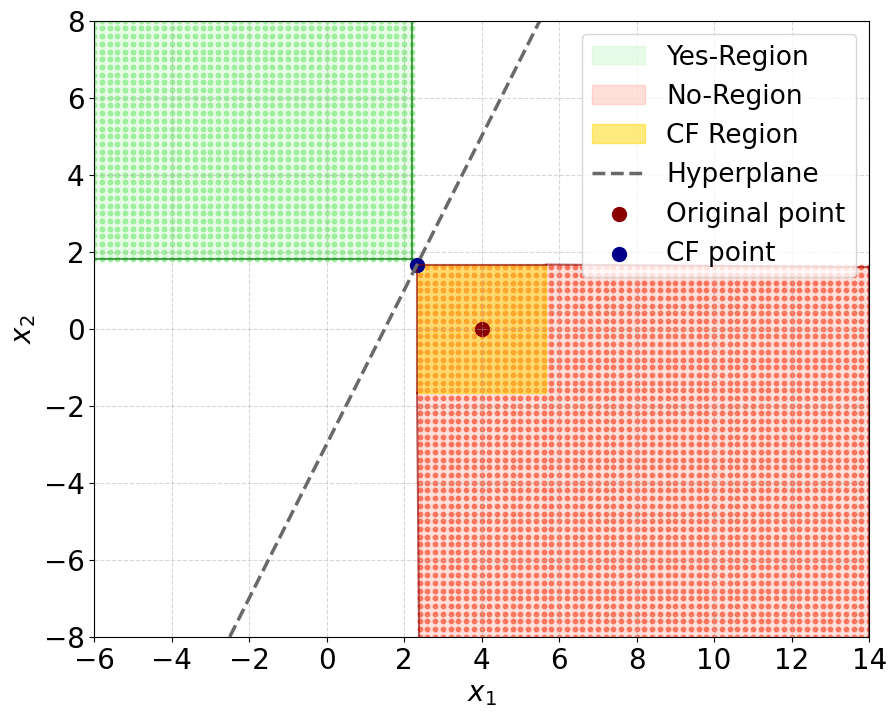}
         \caption{$N_1=\ell_{\infty}$.}
         \label{fig:ex_rgns_CF_Linf}
     \end{subfigure}
        \caption{Example of the classification regions given one data point classified as \textit{`No'} and corresponding counterfactual for different choices of norm-1.}
        \label{fig:ex_rgns_CF}
\end{figure}

\subsection{Extracting the Hyperplane}
We will show in this section that the number of counterfactual queries needed to recover the linear classifier $h_{\bm{a},b}$ depends on the norm $\| \cdot \|_{N_1}$ used in problem \eqref{eq:CF_problem}. We first use classical optimality conditions to derive general conditions on the linear hyperplane parameters $\bm{a}$ and $b$.

In the following we denote by $\partial f(\bm{x}_0)$ the subdifferential set of a function $f: \mathbb{R}^p \to \mathbb{R}$ at point $\bm{x}_0$, \textit{i.e.},
\[
\partial f(\bm{x}_0):= \{ \bm{v}\in\mathbb{R}^p: f(\bm{x}_0) - f(\bm{x}) \ge \bm{v}^\top (\bm{x}_0-\bm{x}) \ \forall \ \bm{x}\in \mathcal X\}.
\]
\begin{lemma}\label{lem:optimality_conditions_CF}
Let $\bm{x}_F\in \mathcal X$ be an arbitrary factual instance and $q_{CF}(\bm{x}_F) = \bm{x}_{CF}^*$ with $\bm{x}_{CF}^*\neq \bm{x}_F$ be its corresponding optimal counterfactual under an arbitrary norm $f(\bm{x}) = \| \bm{x} \|_{N_1}$. Then, there exists a scalar $\lambda^*\in\mathbb R\setminus \{ 0\}$ such that
\begin{align*}
    &\bm{a}^\top \bm{x}_{CF}^* = b, \\
    &\lambda^* \bm{a} \in \partial  f(\bm{x}_{CF}^* - \bm{x}_F).
\end{align*}
\end{lemma}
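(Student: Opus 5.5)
We reduce the counterfactual problem \eqref{eq:CF_problem}, taken over the closure of the opposite classification region as discussed after its definition, to a convex problem with a single linear constraint, and then read off the optimality conditions.

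\textbf{Setting up the convex problem.} Assume first that $h_{\bm{a},b}(\bm{x}_F)=-1$, i.e.\ $\bm{a}^\top\bm{x}_F<b$; the case $h_{\bm{a},b}(\bm{x}_F)=1$ is symmetric with the inequality reversed. Then $\bm{x}_{CF}^*$ solves
\[
\min_{\bm{z}}\ f(\bm{z}-\bm{x}_F)\quad \text{s.t.}\quad b-\bm{a}^\top\bm{z}\le 0 .
\]
The feasible set is a closed halfspace, which is nonempty by (A2), and $\bm{x}_F$ is infeasible.

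\textbf{Step 1: the constraint is active.} Suppose $\bm{a}^\top\bm{x}_{CF}^*>b$. Consider $\bm{z}_t=\bm{x}_F+t(\bm{x}_{CF}^*-\bm{x}_F)$ with $t<1$ close to $1$. By continuity $\bm{z}_t$ is still feasible, and $f(\bm{z}_t-\bm{x}_F)=t\,f(\bm{x}_{CF}^*-\bm{x}_F)$. Because $\bm{x}_{CF}^*\neq\bm{x}_F$, this value is strictly smaller than the optimum, which contradicts optimality. Hence $\bm{a}^\top\bm{x}_{CF}^*=b$.

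In the case where $\bm{x}_F$ is a `Yes' point lying exactly on the boundary, the same scaling argument applies to the closed set $\{\bm{a}^\top\bm{z}\le b\}$. There $\bm{x}_F$ itself would be optimal, which contradicts $\bm{x}_{CF}^*\neq\bm{x}_F$; so this degenerate case does not arise.

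\textbf{Step 2: KKT with subgradients.} The problem is convex, the objective $g(\bm{z})=f(\bm{z}-\bm{x}_F)$ is finite and convex everywhere, and the single constraint is affine. Slater's condition therefore holds trivially; for affine constraints, even a feasible point suffices. The convex KKT conditions then apply, giving a multiplier $\mu^*\ge0$ with
\[
\bm{0}\in \partial g(\bm{x}_{CF}^*) - \mu^*\bm{a}.
\]
Since $\partial g(\bm{x}_{CF}^*)=\partial f(\bm{x}_{CF}^*-\bm{x}_F)$, this reads $\mu^*\bm{a}\in\partial f(\bm{x}_{CF}^*-\bm{x}_F)$. Alternatively, one can avoid citing KKT and argue directly: $\bm{x}_{CF}^*$ minimizes $g+\iota_H$, where $\iota_H$ is the indicator of the halfspace $H$. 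The normal cone of $H$ at a boundary point is $\{-\mu\bm{a}:\mu\ge0\}$, and the subdifferential sum rule holds because $g$ is finite everywhere.

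\textbf{Step 3: the multiplier is nonzero.} This is the step requiring care, because the lemma demands $\lambda^*\neq0$. If $\mu^*=0$, then $\bm{0}\in\partial f(\bm{x}_{CF}^*-\bm{x}_F)$, so $\bm{x}_{CF}^*-\bm{x}_F$ would be a global minimizer of the norm. For a norm this forces $\bm{x}_{CF}^*-\bm{x}_F=\bm{0}$, contradicting the hypothesis. Hence $\lambda^*:=\mu^*\neq0$.

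In the `Yes' case the constraint becomes $\bm{a}^\top\bm{z}-b\le0$, and the same argument yields $\lambda^*=-\mu^*<0$. The sign of $\lambda^*$ is thus free, which is why the statement only asserts $\lambda^*\in\mathbb{R}\setminus\{0\}$.
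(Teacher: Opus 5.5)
Your proof is correct and follows essentially the same route as the paper: reduce to minimizing the norm distance subject to a single linear constraint, apply the convex KKT conditions with the subdifferential of the norm, and rule out a zero multiplier because $\bm{0}\in\partial f(\bm{w})$ forces $\bm{w}=\bm{0}$. The only differences are minor. You keep the inequality constraint and justify its activity with the scaling argument, which the paper merely asserts before passing to the equality-constrained problem. You also track the sign of the multiplier, which the paper does not.
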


The latter lemma indicates that if $f$ is differentiable, then $\partial f$ is a singleton, which provides us the direction of $\bm{a}$, while for non-differentiable norms $\partial f$ may be an infinite set, concealing the true direction of $\bm{a}$. We analyze both cases in the following two subsections.

\subsubsection{Differentiable Norms}
For differentiable norms, the subdifferential set in Lemma \ref{lem:optimality_conditions_CF} contains only the gradient of $f$. Hence, in this case $\bm{a}$ can directly be extracted, which shows that we only need one counterfactual query to recover the hyperplane of the classifier.

\begin{theorem}\label{thm:CF_theorem_diff}
Let $\bm{x}_F\in \mathcal X$ be an arbitrary factual instance and $q_{CF}(\bm{x}_F) = \bm{x}_{CF}^*$ with $\bm{x}_{CF}^*\neq \bm{x}_F$ be its corresponding optimal counterfactual under an arbitrary differentiable norm $f(\bm{x}) = \| \bm{x} \|_{N_1}$.  This one counterfactual query is enough to extract the original classifier's parameters since for $\hat{\bm{a}} = \nabla f(\bm{x}_{CF}^*-\bm{x}_F)$ and $\hat b = \hat{\bm{a}}^\top \bm{x}_{CF}^*$ it holds that the hyperplane given by $\hat{\bm{a}},\hat{b}$ is equivalent to the original hyperplane with parameters $\bm{a},b$.
\end{theorem}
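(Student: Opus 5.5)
The argument is a direct application of \cref{lem:optimality_conditions_CF}. For the query $q_{CF}(\bm{x}_F)=\bm{x}_{CF}^*\neq \bm{x}_F$, the lemma gives a scalar $\lambda^*\neq 0$ with $\bm{a}^\top \bm{x}_{CF}^*=b$ and $\lambda^*\bm{a}\in\partial f(\bm{x}_{CF}^*-\bm{x}_F)$. Since $\bm{x}_{CF}^*-\bm{x}_F\neq \bm{0}$ and the norm $f$ is differentiable away from the origin, the next step is to use the standard fact that the subdifferential of a convex function at a point of differentiability is the singleton $\{\nabla f(\cdot)\}$. Hence $\lambda^*\bm{a}=\nabla f(\bm{x}_{CF}^*-\bm{x}_F)=\hat{\bm{a}}$. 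Because $f$ is convex and finite, its subgradients are gradients at differentiable points, so no additional regularity is needed.

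Next, set $\hat b=\hat{\bm{a}}^\top\bm{x}_{CF}^*$. By the first condition of the lemma, $\hat b=\lambda^*\bm{a}^\top\bm{x}_{CF}^*=\lambda^* b$. Therefore $(\hat{\bm{a}},\hat b)=\lambda^*(\bm{a},b)$ with $\lambda^*\neq 0$. By the scaling remark preceding the definition of equivalent hyperplanes, $\hat{\bm{a}}^\top\bm{x}-\hat b=\lambda^*(\bm{a}^\top\bm{x}-b)$ vanishes exactly when $\bm{a}^\top\bm{x}-b$ does, so the two hyperplanes are equivalent. It also helps to note that $\hat{\bm{a}}\neq\bm{0}$: for a norm, a subgradient $\bm{g}$ at $\bm{d}\neq\bm{0}$ satisfies $\bm{g}^\top\bm{d}=\|\bm{d}\|_{N_1}>0$, which is consistent with $\lambda^*\neq 0$ and $\bm{a}\neq\bm{0}$.

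Finally, all of this comes from a single counterfactual query: $\bm{x}_F$ is known to the attacker and $\bm{x}_{CF}^*$ is the query output, so $\hat{\bm{a}}$ and $\hat b$ can be computed. If the orientation of the classifier is also required, i.e.\ which side is `Yes', the sign of $\lambda^*$ is fixed by the known factual label. The counterfactual moves from $\bm{x}_F$ toward the opposite side, so $\hat{\bm{a}}^\top(\bm{x}_{CF}^*-\bm{x}_F)>0$, and comparing this with $\bm{a}^\top\bm{x}_F-b$ determines the sign. This orientation point goes beyond the stated equivalence claim, which only concerns the hyperplane itself.

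The only step with real content is the singleton-subdifferential argument, and it is standard. Since the lemma is assumed, I expect no substantial obstacle; the main care point is checking that $\bm{x}_{CF}^*-\bm{x}_F\neq\bm{0}$, which is guaranteed by hypothesis, so that differentiability applies.
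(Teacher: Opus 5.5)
Your proposal is correct and follows the paper's proof essentially verbatim: apply \cref{lem:optimality_conditions_CF}, use that the subdifferential at the nonzero point $\bm{x}_{CF}^*-\bm{x}_F$ is the singleton $\{\nabla f(\bm{x}_{CF}^*-\bm{x}_F)\}$ to get $\hat{\bm{a}}=\lambda^*\bm{a}$, and then obtain $\hat b=\lambda^* b$ from $\bm{a}^\top\bm{x}_{CF}^*=b$. One small caveat on your orientation aside: the counterfactual output alone does not reveal the label of $\bm{x}_F$, since the optimal counterfactual lies on the hyperplane in either case, so fixing the sign of $\lambda^*$ costs one extra factual query, as the paper notes after the theorem.
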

Note that \cref{thm:CF_theorem_diff} finds a hyperplane equivalent to the one used by the original classifier, however to obtain an equivalent classifier, we still need to know which side of the hyperplane is classified as $1$ and which one as $-1$. This can be done by one factual query of a point lying outside of the hyperplane. Note that the assumption $\bm{x}_F\neq \bm{x}_{CF}^*$ in \cref{thm:CF_theorem_diff} is not too restrictive since it is only violated if $\bm{x}_F$ lies on the hyperplane. However, for a randomly drawn point $\bm{x}_F\in \mathcal X$, this happens with probability zero. In case we have a point $\bm{x}_F$ which lies on the hyperplane, we may query $p$ linearly independent points around $\bm{x}_F$, \textit{e.g.}, the points $\bm{x}_F + \bm{e}^i$ for $i=1,\ldots ,p$. Then at least one of these points does not lie on the hyperplane, and we can apply Theorem \ref{thm:CF_theorem_diff} to recover the hyperplane. 

\subsubsection{Non-differentiable norms}
Unfortunately, Lemma \ref{lem:optimality_conditions_CF}  implies that, in general, one counterfactual query does not extract the original hyperplane if the norm $\|\cdot\|_{N_1}$ is non-differentiable. This is because the subdifferential for non-differentiable norms is not necessarily a singleton but an infinite set. Figures \ref{fig:ex_rgns_CF_L1} and \ref{fig:ex_rgns_CF_Linf} show, in the case of non-differentiable norms, that the uncertainty in $\bm{a},b$ leads to regions for which the classification is not known.
In this case, the subdifferential set of the norm-function $f(\bm{x})=\|\bm{x}\|_{N_1}$ in Lemma \ref{lem:optimality_conditions_CF} can be reformulated as
\begin{equation*}\label{eq:reformulation_subdifferentiable_norm}
    \partial  f(\bm{x}) = \{ \bm{v}\in \mathbb R^p: \bm{v}^\top \bm{x} = \|\bm{x}\|_{N_1} , \|\bm{v}\|_{N_1}^* \le 1\},
\end{equation*}
where $\| \cdot \|_{N_1}^*$ is the dual norm; see \cite{rockafellar1997convex}. This leads to the following result. 
\begin{corollary}
\label{cor:optimality_conditions_CF_non_differentiable_norm}
Let $\bm{x}_F\in \mathcal X$ be an arbitrary factual instance and $q_{CF}(\bm{x}_F) = \bm{x}_{CF}^*$ under an arbitrary norm $ \| \cdot \|_{N_1}$.  Then, $\bm{x}_{CF}^*$ is an optimal counterfactual if and only if there exists a $\lambda\in\mathbb R\setminus \{ 0\}$ such that
\begin{align*}
    &\bm{a}^\top \bm{x}_{CF}^* = b, \\
    &  \lambda \bm{a}^\top (\bm{x}_{CF}^* - \bm{x}_F) = \|\bm{x}_{CF}^* - \bm{x}_F\|_{N_1}, \\
    & \|\lambda \bm{a} \|_{N_1}^* \le 1 .
\end{align*}    
\end{corollary}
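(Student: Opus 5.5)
The plan is to reduce the counterfactual problem \eqref{eq:CF_problem} to a convex program over a closed halfspace and then apply standard convex optimality conditions together with the characterization of the subdifferential of a norm quoted just before the corollary. Without loss of generality assume $h_{\bm{a},b}(\bm{x}_F)=-1$, i.e. $\bm{a}^\top\bm{x}_F<b$; the other case is symmetric with $\bm{a}$ replaced by $-\bm{a}$, which only flips the sign of $\lambda$. Following the convention stated after the definition of counterfactual queries, the feasible set is the closure $H=\{\bm{z}:\bm{a}^\top\bm{z}\ge b\}$. The problem then becomes $\min_{\bm{z}\in H} f(\bm{z}-\bm{x}_F)$, where $f=\|\cdot\|_{N_1}$. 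This is a convex problem with one linear inequality constraint, so Slater's condition holds trivially (pick $\bm{z}$ with $\bm{a}^\top\bm{z}>b$, which exists by (A2)). Consequently the KKT conditions are both necessary and sufficient: $\bm{x}_{CF}^*$ is optimal iff there exists $\mu\ge 0$ with $\mu\bm{a}\in\partial f(\bm{x}_{CF}^*-\bm{x}_F)$, $\bm{a}^\top\bm{x}_{CF}^*\ge b$, and $\mu(\bm{a}^\top\bm{x}_{CF}^*-b)=0$.

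The next step is to pin down activity of the constraint and nonvanishing of the multiplier. Since $\bm{x}_F\notin H$, we have $\bm{x}_{CF}^*\neq\bm{x}_F$, so $f(\bm{x}_{CF}^*-\bm{x}_F)>0$. Because $\bm{0}\in\partial f(\bm{w})$ only at $\bm{w}=\bm{0}$, the case $\mu=0$ is impossible, so $\mu>0$. Complementary slackness then gives $\bm{a}^\top\bm{x}_{CF}^*=b$, the first condition of the corollary; this is just Lemma \ref{lem:optimality_conditions_CF} restated. Setting $\lambda=\mu\neq 0$, we substitute $\bm{v}=\lambda\bm{a}$ into $\partial f(\bm{x})=\{\bm{v}:\bm{v}^\top\bm{x}=\|\bm{x}\|_{N_1},\ \|\bm{v}\|_{N_1}^*\le 1\}$ at $\bm{x}=\bm{x}_{CF}^*-\bm{x}_F$. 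This yields exactly the remaining two conditions, which proves the ``only if'' direction.

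For the ``if'' direction, suppose $\lambda\neq0$ satisfies the three conditions. By the subdifferential characterization, $\lambda\bm{a}\in\partial f(\bm{x}_{CF}^*-\bm{x}_F)$. One could appeal to KKT sufficiency, but a direct argument is cleaner: for any feasible $\bm{z}$, the dual norm inequality gives
\[
\|\bm{z}-\bm{x}_F\|_{N_1}\ \ge\ \lambda\bm{a}^\top(\bm{z}-\bm{x}_F)\ =\ \lambda\bm{a}^\top(\bm{x}_{CF}^*-\bm{x}_F)+\lambda(\bm{a}^\top\bm{z}-b),
\]
where the equality uses $\bm{a}^\top\bm{x}_{CF}^*=b$. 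The first term on the right equals $\|\bm{x}_{CF}^*-\bm{x}_F\|_{N_1}$ by the second condition, so it remains to show $\lambda(\bm{a}^\top\bm{z}-b)\ge0$ on the feasible set.

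This sign issue is the main obstacle. The corollary only asserts $\lambda\neq0$, but the argument needs $\lambda$ to have the sign matching the side of $\bm{x}_F$. I would resolve it by noting that the second condition forces the sign. The right-hand side $\|\bm{x}_{CF}^*-\bm{x}_F\|_{N_1}$ is positive when $\bm{x}_{CF}^*\ne\bm{x}_F$, while $\bm{a}^\top(\bm{x}_{CF}^*-\bm{x}_F)=b-\bm{a}^\top\bm{x}_F>0$ in our case. Hence $\lambda>0$ automatically, and then $\lambda(\bm{a}^\top\bm{z}-b)\ge0$ for all $\bm{z}\in H$, which proves optimality. In the symmetric case $\lambda<0$ and $H$ is reversed, so the same bound holds. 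The degenerate case where $\bm{x}_F$ lies on the hyperplane, so that $\bm{x}_{CF}^*=\bm{x}_F$, must be excluded or treated separately, as in Lemma \ref{lem:optimality_conditions_CF}; I would state that assumption explicitly.
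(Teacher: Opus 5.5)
Your proof is correct and follows essentially the paper's route: the paper obtains the corollary by combining the KKT conditions of Lemma~\ref{lem:optimality_conditions_CF} with the subdifferential characterization $\partial f(\bm{x})=\{\bm{v}:\bm{v}^\top\bm{x}=\|\bm{x}\|_{N_1},\ \|\bm{v}\|_{N_1}^*\le 1\}$. Your version makes explicit what the paper leaves implicit, namely the sufficiency direction (via the dual-norm inequality) and the fact that the second condition fixes the sign of $\lambda$.

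One small correction: you do not need to exclude the degenerate case $\bm{a}^\top\bm{x}_F=b$. There, conditions one and two give $\|\bm{x}_{CF}^*-\bm{x}_F\|_{N_1}=\lambda\bm{a}^\top(\bm{x}_{CF}^*-\bm{x}_F)=0$, so $\bm{x}_{CF}^*=\bm{x}_F$. Under the closure convention this is the optimum, and any $\lambda\neq 0$ with $|\lambda|\,\|\bm{a}\|_{N_1}^*\le 1$ satisfies all three conditions.
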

 To extract the original hyperplane, we need additional counterfactual queries. We note that a set of $p$ linearly independent points $\{\bm{x}^{(1)},\hdots, \bm{x}^{(p)}\}$ on the hyperplane, \textit{i.e.},
\begin{align}\label{eq:syst_eq_CF}
\bm{a}^\top \bm{x}^{(i)} - b & = 0 \quad \forall i=1,\ldots, p
\end{align}
is enough to extract $\bm{a},b$. Thus, by finding $p$ linearly independent counterfactual points, we can extract a hyperplane revealing the original classifier. 

\begin{remark}
In the case that one of our counterfactual queries returns $\bm{0}$ as a counterfactual for a certain point, we know that $b=0$ since $\bm{0}$ lies on the hyperplane. Therefore, the solution space of $\bm{a},b$ boils down to the solution space of $\bm{a}$, which is one-dimensional when we have $p-1$ linearly independent points $\{\bm{x}^{(1)},\ldots, \bm{x}^{(p-1)}\}$. The system of equations described in \eqref{eq:syst_eq_CF} then simplifies to
\begin{align*}
\bm{a}^\top \bm{x}^{(i)} & = 0 \quad \forall i=1,\ldots, p-1.
\end{align*}
Since we only need the direction of $\bm{a}$ (any scaling leads to the same hyperplane), the latter equation system allows us to recover the classifier. Thus, if $\bm{0}$ is a counterfactual, then by finding $p-1$ linearly independent counterfactual vectors, the optimality conditions in Corollary \ref{cor:optimality_conditions_CF_non_differentiable_norm} still ensure that we retrieve a hyperplane equivalent to the original classifier.
\end{remark}

To obtain a set of $p$ linearly independent points on the hyperplane, we strategically choose the factual instances to query. The following two lemmas are helping us with that.

\begin{lemma}\label{lem:CF_direction}
Let $\bm{x}_F\in \mathcal X$ be an arbitrary factual instance that does not lie on the hyperplane and $q_{CF}(\bm{x}_F) = \bm{x}_{CF}^*$ under an arbitrary norm $ \| \cdot \|_{N_1}$. Then,
\begin{equation}\label{eq:CF_direction}
        \bm{x}_{CF}^*=\bm{x}_F+d_{\bm{x}_F}\bm{v},
    \end{equation}
with $d_{\bm{x}_F}=\frac{b-\bm{a}^{\top}\bm{x}_F}{\|\bm{a}\|_{N_1}^*}$ is an optimal counterfactual if and only if $\|\bm{v}\|_{N_1}\leq 1$ and $\bm{a}^{\top}\bm{v}=\|\bm{a}\|_{N_1}^*$.
\end{lemma}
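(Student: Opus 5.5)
The plan is to first compute the optimal value of problem \eqref{eq:CF_problem} in closed form and then characterize its minimizers. Since $\bm{x}_F$ is not on the hyperplane and we work with the closure of the opposite region, the feasible set is the closed half-space on the other side of $\bm{a}^\top\bm{x}=b$. Among these points, the ones nearest $\bm{x}_F$ lie on the hyperplane: any feasible point strictly beyond it can be moved along the segment toward $\bm{x}_F$ until it hits the hyperplane, which lowers the objective. So the optimal value is the $\|\cdot\|_{N_1}$-distance from $\bm{x}_F$ to the hyperplane, and minimizers are exactly the points $\bm{x}_{CF}$ with $\bm{a}^\top\bm{x}_{CF}=b$ that attain it.

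\textbf{Step 1: the distance formula.} I will show this distance equals $|b-\bm{a}^\top\bm{x}_F|/\|\bm{a}\|_{N_1}^*$.
\begin{itemize}
\item \emph{Lower bound.} For any $\bm{x}$ on the hyperplane, Hölder's inequality for dual norms gives $|b-\bm{a}^\top\bm{x}_F|=|\bm{a}^\top(\bm{x}-\bm{x}_F)|\le \|\bm{a}\|_{N_1}^*\|\bm{x}-\bm{x}_F\|_{N_1}$.
\item \emph{Attainment.} Pick $\bm{v}$ with $\|\bm{v}\|_{N_1}\le 1$ and $\bm{a}^\top\bm{v}=\|\bm{a}\|_{N_1}^*$. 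Such a $\bm{v}$ exists because the dual norm is a maximum over the compact unit ball. Then $\bm{x}_F+d_{\bm{x}_F}\bm{v}$ satisfies $\bm{a}^\top(\bm{x}_F+d_{\bm{x}_F}\bm{v})=\bm{a}^\top\bm{x}_F+(b-\bm{a}^\top\bm{x}_F)=b$. Its distance from $\bm{x}_F$ is $|d_{\bm{x}_F}|\,\|\bm{v}\|_{N_1}\le |d_{\bm{x}_F}|$, which matches the lower bound.
\end{itemize}
So the optimal value is $|d_{\bm{x}_F}|$.

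\textbf{Step 2: the ``if'' direction.} Given $\bm{v}$ with the two stated properties, the point $\bm{x}_F+d_{\bm{x}_F}\bm{v}$ lies on the hyperplane, so it is feasible (the boundary belongs to the closure of both regions). By Step 1 its objective value is at most $|d_{\bm{x}_F}|$, so it is optimal.

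\textbf{Step 3: the ``only if'' direction.} Let $\bm{x}_{CF}^*$ be optimal. Since $d_{\bm{x}_F}\neq 0$, define $\bm{v}:=(\bm{x}_{CF}^*-\bm{x}_F)/d_{\bm{x}_F}$, so that \eqref{eq:CF_direction} holds by construction.
\begin{itemize}
\item By optimality, $\|\bm{x}_{CF}^*-\bm{x}_F\|_{N_1}=|d_{\bm{x}_F}|$, hence $\|\bm{v}\|_{N_1}=1\le 1$.
\item The optimal point lies on the hyperplane (shown directly, or by Lemma~\ref{lem:optimality_conditions_CF}), so $\bm{a}^\top\bm{x}_{CF}^*=b$. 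Therefore $d_{\bm{x}_F}\,\bm{a}^\top\bm{v}=b-\bm{a}^\top\bm{x}_F=d_{\bm{x}_F}\|\bm{a}\|_{N_1}^*$.
\item Dividing by $d_{\bm{x}_F}$ gives $\bm{a}^\top\bm{v}=\|\bm{a}\|_{N_1}^*$.
\end{itemize}

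\textbf{Main obstacle.} The delicate point is justifying that every optimal counterfactual lies exactly on the hyperplane. This depends on the closure convention and on the segment argument: a strictly interior feasible point is never optimal, because shrinking $\bm{x}-\bm{x}_F$ by a factor $t<1$ strictly reduces the norm while keeping the point feasible. A second, minor issue is the sign of $d_{\bm{x}_F}$. It can be either positive or negative, and the argument must not assume otherwise; the definition of $\bm{v}$ absorbs the sign, which is why the condition reads $\bm{a}^\top\bm{v}=+\|\bm{a}\|_{N_1}^*$ rather than involving an absolute value.
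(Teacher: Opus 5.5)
Your argument is correct, but it takes a different route from the paper. The paper never computes the optimal value. Instead it uses the KKT characterization in Corollary~\ref{cor:optimality_conditions_CF_non_differentiable_norm}, which comes from Lemma~\ref{lem:optimality_conditions_CF} and the subdifferential formula for a norm. It proves the ``if'' direction by exhibiting the multiplier $\lambda=\sgn(d_{\bm{x}_F})\|\bm{v}\|_{N_1}/\bm{a}^\top\bm{v}$ and checking the three conditions. It proves the ``only if'' direction by reading $\bm{a}^\top\bm{v}=\|\bm{a}\|_{N_1}^*$ off the hyperplane condition and $\|\bm{v}\|_{N_1}\le 1$ off the bound $\|\lambda\bm{a}\|_{N_1}^*\le 1$. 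You stay entirely on the primal side: H\"older's inequality for the dual pair, together with an explicit feasible point, gives the closed-form optimal value $|d_{\bm{x}_F}|$, and both directions become comparisons of objective values.

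Your version is more self-contained and slightly sharper in three ways. First, you actually justify that every optimal counterfactual lies on the hyperplane, which the paper only asserts in the proof of Lemma~\ref{lem:optimality_conditions_CF}. Second, you see that $\|\bm{v}\|_{N_1}=1$ is forced. Third, you treat the sign of $d_{\bm{x}_F}$ correctly. The paper's proof claims $d_{\bm{x}_F}>0$, which is false for factuals classified as \textit{`Yes'}; its argument survives only because of the $\sgn(d_{\bm{x}_F})$ in its choice of $\lambda$.

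The paper's route, in turn, fits the optimality-condition template it reuses for Lemma~\ref{lem:RCF_direction} via Corollary~\ref{cor:optimality_conditions_RCF_non_differentiable_norm}. Your argument transfers just as easily there, since the robust feasible set is again a closed half-space, with $b$ replaced by $b-q_F(\bm{x}_F)\rho\|\bm{a}\|_{N_2}^*$.
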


The lemma demonstrates the existence of a direction $\bm{v}$ such that, for any factual point, there is a corresponding counterfactual point obtained by perturbing the factual point in the direction of $\bm{v}$. Especially, the direction is independent of the factual instance. From the definition of the dual norm it follows that
\begin{equation}\label{eq:max_dir}
    \|\bm{a}\|_{N_1}^* = \max_{\| \bm{v}\|_{N_1} \le 1} \bm{a}^\top \bm{v},
\end{equation}
and hence, we know that the direction $\bm{v}$ must be a maximizer of the latter problem. 

If multiple optimal counterfactuals exist, one counterfactual query might perturb the original point in direction $\bm{v}^{(1)}:=\bm{x}^{(1)*}_{CF}-\bm{x}_F^{(1)}$, while another counterfactual query might perturb the original point in direction $\bm{v}^{(2)}=\bm{x}^{(2)*}_{CF}-\bm{x}_F^{(2)}$ with $\bm{v}^{(1)} \neq \bm{v}^{(2)}$. A consequence of Lemma \ref{lem:CF_direction} is that if such multiple optimal counterfactuals exist, we can always retrieve a counterfactual in the direction of $\bm{v}^{(1)}$. Concretely if for two factual points $\bm{x}_F^{(1)}, \bm{x}_F^{(2)} \in \mathcal X$ we have
\[
\bm{x}_{CF}^{(1)*}
  = \bm{x}_F^{(1)} +  \bm{v}^{(1)}
  \quad\text{and}\quad
\bm{x}_{CF}^{(2)*}
  = \bm{x}_F^{(2)} +  \bm{v}^{(2)},
\]
where $\bm{v}^{(1)} \neq \bm{v}^{(2)}$ and $\bm{v}^{(1)}, \bm{v}^{(2)} \neq \bm{0}$. Then
\[
\bar{\bm{x}}_{CF}^{(1)*}
  = \bm{x}_F^{(1)} + \frac{\|\bm{v}^{(1)}\|_{N_1}}{\|\bm{v}^{(2)}\|_{N_1}} \bm{v}^{(2)}
  \quad\text{and}\quad
\bar{\bm{x}}_{CF}^{(2)*}
  = \bm{x}_F^{(2)} +  \frac{\|\bm{v}^{(2)}\|_{N_1}}{\|\bm{v}^{(1)}\|_{N_1}} \bm{v}^{(1)}
\]
are also optimal counterfactuals. This follows from Lemma \ref{lem:CF_direction} because if for one factual point there exists a counterfactual in the direction of $\bm{v}^{(1)}$, then for any point there exists a counterfactual in the direction of $\bm{v}^{(1)}$. If for another factual point, there exists a counterfactual in a different direction, we obtain the distance between the counterfactual and the factual. To obtain a counterfactual in the direction of $\bm{v}^{(1)}$, we only have to rescale the perturbation to match this distance between the factual and the counterfactual.

\begin{example}[$\ell_1$ and $\ell_{\infty}$ norm]\label{example:l1-linf}
For both the $\ell_1$ and $\ell_{\infty}$ norms, a maximizer of \eqref{eq:max_dir} over the corresponding norm ball is attained at a vertex of the feasible region. Hence, for every factual instance $\bm{x}_F\in\mathcal{X}$ there exists an optimal counterfactual of the form $\bm{x}_{CF}^* = \bm{x}_F + d_{\bm{x}_F}\bm{v}$,
where $\bm{v}$ is a vertex of the norm ball.

For $N_1=\ell_1$, this means that there exists an optimal counterfactual that modifies only one coordinate $j_{0}$ of $\bm{x}_F$. This becomes clear when choosing $\bm{v} = \sgn(a_{j_{0}})\,\bm{e}^{j_{0}}$, where $j_{0} \in \argmax_{j=1,\ldots,p} |a_j|$. Then, $\|\bm{v}\|_1 = 1$ holds and $\bm{a}^\top\bm{v} = |a_{j_{0}}|= \|\bm{a}\|_{\infty} = \|\bm{a}\|_{1}^*$. However, the counterfactual query does not need to return a vertex solution when multiple optimal counterfactuals exist, \textit{i.e.}, when a higher-dimensional face of the $\ell_1$-ball with center $\bm{x}_F$ and radius $\|\bm{x}_F - \bm{x}_{CF}^*\|_1$ intersects the hyperplane. Suppose that the query returns an optimal $\bm{x}_{CF}^* = \bm{x}_F + \sum_{i=1}^p d_i \bm{e}^i$ with at least two indices $i_1,i_2$ such that $d_{i_1}, d_{i_2} \neq 0$. Then, $
\tilde{\bm{x}}_{CF} := \bm{x}_F + \Big(\sum_{i=1}^p d_i\Big)\bm{e}^{i_1}$ lies on the same face of the $\ell_1$-ball and it has the same $\ell_1$-distance to $\bm{x}_F$. Hence, $\tilde{\bm{x}}_{CF}$ is also an optimal counterfactual and it changes only one coordinate of the factual instance.

For $N_1=\ell_{\infty}$ we can choose $\bm{v} = \sum_{i=1}^p \sgn(a_i)\bm{e}^i$.
Then, $\|\bm{v}\|_{\infty} = 1$ and $\bm{a}^\top\bm{v} = \sum_{i=1}^p |a_i| = \|\bm{a}\|_1=\|\bm{a}\|_{\infty}^*$, so there is an optimal counterfactual obtained by translating $\bm{x}_F$ towards a corner of the $\ell_{\infty}$-ball. Again, when a higher-dimensional face of the $\ell_{\infty}$-ball with center $\bm{x}_F$ and radius $\|\bm{x}_F - \bm{x}_{CF}^*\|_{\infty}$ intersects the hyperplane, the query may return a non-corner optimal point $\bm{x}_{CF}^* = \bm{x}_F + \sum_{i=1}^p d_i \bm{e}^i$ with at least two indices $i_1,i_2$ such that $|d_{i_1}| \neq |d_{i_2}|$. Let $j \in \argmax_{i=1,\ldots,p} |d_i|$, then $\tilde{\bm{x}}_{CF} := \bm{x}_F + d_j \sum_{i=1}^p \sgn(d_i)\bm{e}^i$ lies on the same face of the $\ell_{\infty}$-ball and it has the same $\ell_{\infty}$-distance to $\bm{x}_F$. Thus, $\tilde{\bm{x}}_{CF}$ is an optimal counterfactual obtained by translating $\bm{x}_F$ via a corner point of the $\ell_{\infty}$-ball.

Consequently, for both norms, there always exists an optimal counterfactual that moves the factual instance along a vertex direction of the respective norm ball, and this optimal counterfactual can be reconstructed from any optimal solution.
\end{example}

\begin{lemma}\label{lem:CF_basis}
    Let $\bm{v}$ be a vector from Lemma \ref{lem:CF_direction} and $V=\{\bm{v},\bm{v}^{2},\ldots, \bm{v}^{p}\}$ be a basis of $\mathbb{R}^p$. If $\bm{v}_{CF}$ denotes the optimal counterfactual of $\bm{v}$ and $\bm{v}^{i}_{CF}$ the optimal counterfactual of $\bm{v}^{i}$ as given by \eqref{eq:CF_direction}, then we have the following:
    \begin{enumerate}[(i)]
        \item If $\bm{v}_{CF}\neq\bm{0}$, then the set of counterfactuals $\{\bm{v}_{CF},\bm{v}_{CF}^{2},\ldots, \bm{v}_{CF}^{p}\}$ is linearly independent.
        \item If $\bm{v}_{CF}=\bm{0}$, then the set  of counterfactuals $\{\bm{v}_{CF}^{2},\ldots, \bm{v}_{CF}^{p}\}$ is linearly independent.
    \end{enumerate}
\end{lemma}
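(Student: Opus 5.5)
By Lemma \ref{lem:CF_direction}, every counterfactual produced by \eqref{eq:CF_direction} has the form $\bm{x}_{CF}=\bm{x}+d_{\bm{x}}\bm{v}$ with $d_{\bm{x}}=(b-\bm{a}^\top\bm{x})/\|\bm{a}\|_{N_1}^*$, and the direction $\bm{v}$ is the same for every factual. The plan is to exploit this common direction. Setting $c:=\|\bm{a}\|^*_{N_1}>0$, the map $T(\bm{x}):=\bm{x}+\frac{b-\bm{a}^\top\bm{x}}{c}\bm{v}$ is affine. Its linear part is $L=I-\frac{1}{c}\bm{v}\bm{a}^\top$. Since $\bm{a}^\top\bm{v}=c$, $L$ is the projection along $\bm{v}$ onto $\{\bm{y}:\bm{a}^\top\bm{y}=0\}$ and has kernel $\mathrm{span}\{\bm{v}\}$. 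Hence $T(\bm{x})=L\bm{x}+\frac{b}{c}\bm{v}$. The points in the lemma are $\bm{v}_{CF}=T(\bm{v})=\frac{b}{c}\bm{v}$ (because $L\bm{v}=\bm{0}$) and $\bm{v}^i_{CF}=L\bm{v}^i+\frac{b}{c}\bm{v}$.

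For (ii), $\bm{v}_{CF}=\bm{0}$ means $b=0$, so $\bm{v}^i_{CF}=L\bm{v}^i$. Suppose $\sum_{i\ge 2}\alpha_i L\bm{v}^i=\bm{0}$. Then $\sum_{i\ge2}\alpha_i\bm{v}^i\in\ker L=\mathrm{span}\{\bm{v}\}$. Because $V$ is a basis, this forces all $\alpha_i=0$.

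For (i), $\bm{v}_{CF}\ne\bm{0}$ means $b\neq 0$. Suppose $\alpha_1\frac{b}{c}\bm{v}+\sum_{i\ge2}\alpha_i(L\bm{v}^i+\frac bc\bm{v})=\bm{0}$. The key step is to split this along the direct sum $\mathbb{R}^p=\mathrm{span}\{\bm{v}\}\oplus\{\bm{y}:\bm{a}^\top\bm{y}=0\}$, which is valid because $\bm{a}^\top\bm{v}=c\ne0$. Every term $L\bm{v}^i$ lies in the hyperplane component and every multiple of $\bm{v}$ lies in the other. Applying $\bm{a}^\top$ therefore gives $b(\alpha_1+\sum_{i\ge2}\alpha_i)=0$, so $\sum_{i}\alpha_i=0$. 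Substituting back leaves $\sum_{i\ge2}\alpha_iL\bm{v}^i=\bm{0}$, and the argument from (ii) gives $\alpha_i=0$ for $i\ge2$, hence $\alpha_1=0$.

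The main obstacle is justifying that the counterfactuals are given by this single affine map with the same $\bm{v}$ for all the queried points, including $\bm{v}$ itself and any $\bm{v}^i$ lying on the hyperplane. For points off the hyperplane this is exactly the statement "as given by \eqref{eq:CF_direction}" together with the rescaling argument after Lemma \ref{lem:CF_direction}. For points on the hyperplane, $d=0$ and the formula $T(\bm{x})=\bm{x}$ still holds, so the argument above is unaffected.
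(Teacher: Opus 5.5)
Your proof is correct, but it is organized differently from the paper's. The paper never introduces the projection $L$ and never uses the explicit value of $d_{\bm{x}}$. It substitutes $\bm{v}^i_{CF}=\bm{v}^i+d_{\bm{v}^i}\bm{v}$ into $\lambda\bm{v}_{CF}+\sum_{i\ge2}\lambda_i\bm{v}^i_{CF}=\bm{0}$ and collects terms into $\bigl(\lambda(1+d_{\bm{v}})+\sum_{i\ge2}\lambda_i d_{\bm{v}^i}\bigr)\bm{v}+\sum_{i\ge2}\lambda_i\bm{v}^i=\bm{0}$. Uniqueness of coordinates in the basis $V$ then gives $\lambda_i=0$ for $i\ge2$ and $\lambda(1+d_{\bm{v}})=0$. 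Since $(1+d_{\bm{v}})\bm{v}=\bm{v}_{CF}\neq\bm{0}$, this forces $\lambda=0$, and case (ii) is the same computation without $\lambda$. That argument is shorter and more general. It works for arbitrary scalars $d_{\bm{v}^i}$, since only the common direction $\bm{v}$ and the basis property matter. It also settles (i) in a single coefficient comparison, whereas you take two steps: applying $\bm{a}^\top$, then reducing to (ii). What your route buys is structure. Recognizing the counterfactual map as the affine projection $T(\bm{x})=L\bm{x}+\frac{b}{c}\bm{v}$ along $\bm{v}$ onto the decision hyperplane shows that $\bm{v}_{CF}=\frac{b}{c}\bm{v}$. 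So the case split (i)/(ii) is exactly $b\neq0$ versus $b=0$, which explains the paper's remark about $\bm{0}$ being a counterfactual. It also handles automatically factuals on the hyperplane and on either side of it, via the signed $d_{\bm{x}}$.
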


Lemmas \ref{lem:CF_direction} and \ref{lem:CF_basis} show that we can always construct a counterfactual of the form $\bm{x}_{CF} = \bm{x}_F + d_{\bm{x}_F}\bm{v}$, and if we have a basis $V$ such that $\bm{v}\in V$, then querying counterfactuals for the whole basis $V$ will lead to a linearly independent set of points that lie on the hyperplane. This allows us to recover all parameters $\bm{a},b$.

Since we do not know $\bm{a},b$ a priori, we also do not know the vector $\bm{v}$ from Lemma \ref{lem:CF_direction}. However, after one counterfactual query for factual $\bm{x}_F$ such that $\bm{x}_F\neq \bm{x}_{CF}$ we can obtain $\bm{v}$ from Lemma \ref{lem:CF_direction} as
\[
\bm{v} = \frac{1}{d_{\bm{x}_F}}\left( \bm{x}_{CF} - \bm{x}_F\right).
\]
Afterwards, we can use the Gram-Schmidt process to create a basis $V$ that contains $\bm{v}$. This procedure is described in \cref{alg:CF_nondiff}.

\begin{algorithm}[H]
\caption{Extracting $\hat{\bm{a}},\hat{b}$ using counterfactuals with non-differentiable norm $\| \cdot \|_{N_1}$}\label{alg:CF_nondiff}
\begin{algorithmic}
\State \textbf{Input:} Counterfactual query $q_{CF}$, norm $f(\bm{x})=\| \bm{x}\|_{N_1}$.
\For{$i=1,\hdots,p$}
\State Query $\bm{x}_{CF}^{(i)*} \leftarrow q_{CF}(\bm{e}^{i})$
\If{$\bm{x}_{CF}^{(i)*}\neq\bm{e}^{i}$}
\State $\bm{v} \gets \bm{x}_{CF}^{(i)*} -\bm{e}^{i}$
\State \textbf{break} \textit{for-loop}
\EndIf
\EndFor
\State Create basis $V$ such that $\bm{v}\in V$ using the Gram-Schmidt process
\State Query the points $\bm{v}_{CF}^{i} \leftarrow q_{CF}(\bm{v}^{i})$ for all $\bm{v}^{i}\in V$
\State $\hat{\bm{a}},b \gets$ find solution to the set of linear equations (\ref{eq:syst_eq_CF})
\State \textbf{return} $\hat{\bm{a}},\hat b$
\end{algorithmic}
\end{algorithm}

\begin{theorem}\label{thm:CF_nondiff} 
Assuming $\bm{x}_{CF}^{(1)*}\neq \bm{e}^{(1)}$, \cref{alg:CF_nondiff} uses $p+1$ counterfactual queries to return parameters $\hat{\bm{a}},\hat b$ of a hyperplane, which is equivalent to the original hyperplane with parameters $\bm{a},b$.
\end{theorem}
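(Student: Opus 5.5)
The plan is to count the queries and then show that the $p$ points returned in the second phase give a linear system (\ref{eq:syst_eq_CF}) whose solution set is exactly the line spanned by $(\bm{a},b)$.

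\emph{Counting.} Under the assumption $\bm{x}_{CF}^{(1)*}\neq\bm{e}^{1}$, the for-loop stops after its first iteration, so it uses one query. The basis $V$ has $p$ elements, so the second phase uses $p$ queries. In total this is $p+1$.

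\emph{Direction.} Because $\bm{x}_{CF}^{(1)*}\neq\bm{e}^1$, the point $\bm{e}^1$ does not lie on the hyperplane. By Lemma~\ref{lem:CF_direction} the returned point has the form $\bm{e}^1+d_{\bm{e}^1}\bm{v}'$ with $d_{\bm{e}^1}\neq 0$, $\|\bm{v}'\|_{N_1}\le 1$ and $\bm{a}^\top\bm{v}'=\|\bm{a}\|^*_{N_1}$. The algorithm's $\bm{v}=\bm{x}_{CF}^{(1)*}-\bm{e}^1$ is therefore a nonzero multiple of a vector satisfying the conditions of Lemma~\ref{lem:CF_direction}. Since only the direction matters, I will rescale $\bm{v}$ by $1/d_{\bm{e}^1}$, or equivalently note that Lemma~\ref{lem:CF_basis} only uses the span and the direction. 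Gram--Schmidt then gives a basis $V$ containing this direction.

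\emph{Counterfactuals on the hyperplane.} By the discussion after Lemma~\ref{lem:CF_direction} and Example~\ref{example:l1-linf}, for every factual point there is an optimal counterfactual along $\bm{v}$. Any other returned optimum can be rescaled to one along $\bm{v}$ using the known distance. So I may assume $\bm{v}^i_{CF}=\bm{v}^i+d_{\bm{v}^i}\bm{v}$. Every such point satisfies $\bm{a}^\top\bm{v}^i_{CF}=b$, by Lemma~\ref{lem:optimality_conditions_CF} when $\bm{v}^i$ is off the hyperplane, and trivially when $d_{\bm{v}^i}=0$.

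\emph{Uniqueness of the solution.} I split into the two cases of Lemma~\ref{lem:CF_basis}.

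In case (i), the $p$ counterfactuals are linearly independent points on the hyperplane. Suppose $(\hat{\bm{a}},\hat b)$ solves (\ref{eq:syst_eq_CF}). The homogeneous system in $(\hat{\bm a},\hat b)\in\mathbb{R}^{p+1}$ has the $p\times(p+1)$ coefficient matrix $[\,\bm{x}^{(i)\top}\ \ -1\,]$. Linear independence of the $\bm{x}^{(i)}$ gives rank $p$, so the solution space is one-dimensional. It contains $(\bm{a},b)\neq\bm 0$, hence equals $\operatorname{span}\{(\bm{a},b)\}$. Any nonzero solution is then a nonzero scalar multiple of $(\bm a,b)$, i.e.\ an equivalent hyperplane.

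In case (ii), we have $\bm{v}_{CF}=\bm 0$, so $b=0$. By the Remark, the remaining $p-1$ linearly independent counterfactuals determine $\bm{a}$ up to scaling. Adding the equation coming from $\bm 0$ (namely $\hat b=0$) gives the same rank-$p$ conclusion.

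In both cases the algorithm must pick a nonzero solution, e.g.\ a basis vector of the null space. The returned $(\hat{\bm a},\hat b)$ is then a nonzero multiple of $(\bm a,b)$ and hence equivalent to it.

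\emph{Main obstacle.} The delicate step is justifying that the query oracle's answers can be taken along the common direction $\bm{v}$, since Lemma~\ref{lem:CF_basis} requires counterfactuals of the form (\ref{eq:CF_direction}), while the oracle may return other optima for non-differentiable norms. I will handle this with the rescaling argument preceding Example~\ref{example:l1-linf}. Alternatively, I will observe that any returned optimum still lies on the hyperplane, and independence can be recovered after replacing it by the rescaled point along $\bm{v}$, which is computable from the returned distance.
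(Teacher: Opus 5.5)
Your main line is the paper's proof: one query to obtain the common direction, $p$ queries on a Gram--Schmidt basis containing it, Lemmas~\ref{lem:CF_direction} and~\ref{lem:CF_basis} to get $p$ independent points on the hyperplane, then solve \eqref{eq:syst_eq_CF}. Your explicit rank argument (the rows $(\bm{x}^{(i)\top},-1)$ are independent, so the solution set is $\operatorname{span}\{(\bm a,b)\}$) and your handling of case (ii) are correct, and they spell out what the paper's three-line proof leaves implicit.

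The step that does not go through as written is your repair of the obstacle you flag. The paper's proof never addresses that obstacle. It applies Lemma~\ref{lem:CF_basis} directly to the oracle's answers, so it tacitly assumes every returned counterfactual has the form \eqref{eq:CF_direction} with one common $\bm v$.

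You are right that the obstacle is real. Take $p=2$, $N_1=\ell_\infty$, $\bm a=(1,0)$, $b=\tfrac12$. The oracle may return $(\tfrac12,\tfrac12)$ for $\bm e^1$, which gives $\bm v=(-\tfrac12,\tfrac12)$. It may return the same point for $\bm v$ and for the Gram--Schmidt partner $(\tfrac12,\tfrac12)$. The raw system \eqref{eq:syst_eq_CF} then has rank one.

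Your fix, rescaling ``using the known distance'', fails on signs. The returned distance only gives $|d_{\bm w}|$ for a queried basis vector $\bm w$. The point along the common direction is $\bm w+(d_{\bm w}/d_{\bm e^1})\,\bm v$, and the sign of that coefficient depends on whether $\bm w$ lies on the same side of the hyperplane as $\bm e^1$. In the example, $\bm w=\bm v$ lies on the opposite side, so the correct replacement is $\bm v-2\bm v=(\tfrac12,-\tfrac12)$. The positive rescaling gives $(-\tfrac32,\tfrac32)$, which is off the hyperplane and leads to the wrong hyperplane $x_1+2x_2=\tfrac32$. The paper's own formula after Lemma~\ref{lem:CF_direction}, which you rely on, has the same sign oversight.

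To close the gap, you have two options:
\begin{enumerate}[(i)]
\item State explicitly the consistent-oracle assumption that the paper uses implicitly.
\item Obtain the side of each queried point, e.g.\ by factual queries of $\bm e^1$ and of each basis vector. This costs extra queries beyond the $p+1$ counterfactual ones.
\end{enumerate}
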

Note that the assumption in \cref{thm:CF_nondiff} is not too restrictive since an arbitrary point as $\bm{e}^{(1)}$ is unlikely to lie on the hyperplane. Removing this assumption, it could be possible that all directions $\bm{e}^i$ need to be queried to find $\bm{v}$. However, then we would already have found $p$ independent points on the hyperplane, so we can retrieve $\bm{a},b$. When $\bm{v}$ is found after querying $p-1$ directions, then \cref{alg:CF_nondiff} will query a newly created basis, hence in total $2p-1$ counterfactual queries are needed in the worst case.

\begin{example}    
We demonstrate the effectiveness of our approach using a simple two-dimensional example shown in Figure \ref{fig:CF_Example_2D}. Consider the hyperplane given by $2x_1-x_2=3$, \textit{i.e.}, $\bm{a}=(2,-1), b=3$. We examine a counterfactual mechanism with $N_1=\ell_{\infty}$ as \textit{minimal edit}. First, we ask a counterfactual query for the point $(3,0)$, which yields its optimal counterfactual point $(2,1)$. This gives us the direction of the counterfactuals, $(2,1)-(3,0)=(-1,1)$. Next, we use a counterfactual query for the point $(-1,1)$, which outputs $q_{CF}(-1,1)=(1,-1)$. Note that we have already obtained a linearly independent set of equations to determine the hyperplane parameters:
\begin{align*}
    2a_1+a_2&=b, &
    a_1-a_2&=b.
\end{align*}
By setting $b=1$, we obtain the solution $\bm{a}=\frac{1}{3}(2, 1)$. So, we found a hyperplane that is equivalent to the original hyperplane.
\begin{figure}[h]
    \centering
    \includegraphics[width=0.5\linewidth]{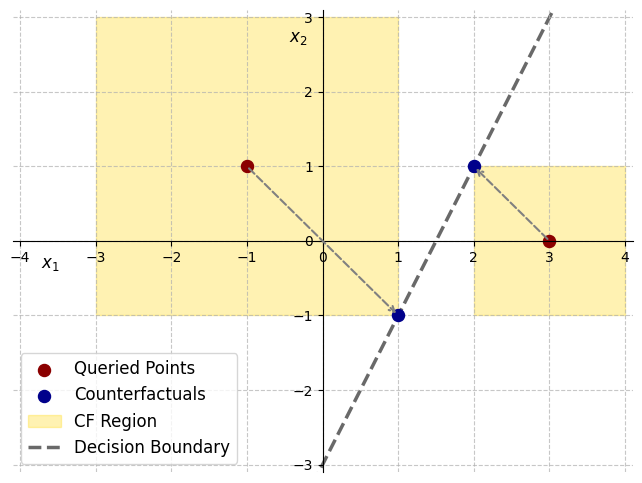}
    \caption{Example of extracting the hyperplane using counterfactual queries with $N_1=\ell_{\infty}$.}
    \label{fig:CF_Example_2D}
\end{figure}
\end{example}

\section{Robust Counterfactual Queries}
Building upon the results of the counterfactual queries, we will first discuss in this section the number of robust counterfactual (RCF) queries needed to extract the original hyperplane. Additionally, we will show how the induced \textit{`Yes'} and \textit{`No'} regions are characterized when not enough queries are given. 

\subsection{Extracting the Hyperplane}
In the following, we will show that the number of RCF queries needed to recover the linear classifier $h_{\bm{a},b}(\bm{x})$ depends on the norm of the robust counterfactual problem \eqref{eq:robust_CF_problem} but is independent of the norm used for the robustness set. The results show that the number of queries needed doubles compared to the classical counterfactuals, which indicates that in our setup, where the robustness radius $\rho$ is known, robust counterfactuals provide an additional level of privacy compared to classical counterfactuals. We first show the following lemma, which is the robust counterpart of Lemma \ref{lem:optimality_conditions_CF}.

\begin{lemma}\label{lem:optimality_conditions_RCF}
Let $\bm{x}_F\in \mathcal X$ be an arbitrary factual instance and $\bm{x}_{RCF}^*\neq \bm{x}_F$ be a corresponding optimal robust counterfactual under an arbitrary norm $f(\bm{x})=\| \bm{x} \|_{N_1}$ and robustness set $\mathcal S=\{ \bm{s}\mid \|\bm{s}\|_{N_2}\le \rho\}$ with $\rho > 0$. Then, there exists a scalar $\lambda^*\in \mathbb R\setminus \{0\}$ such that
\begin{align*}
    & b = \bm{a}^\top \bm{x}^*_{RCF} +q_F(\bm{x}_F) \rho\|\bm{a}\|_{N_2}^*,  \\
    & \lambda^* \bm{a} \in \partial  f(\bm{x}^*_{RCF} - \bm{x}_F) .
\end{align*}
\end{lemma}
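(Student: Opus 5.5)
The plan is to reduce the robust problem \eqref{eq:robust_CF_problem} to an ordinary counterfactual problem whose target set is a half-space shifted by the robustness margin, and then reuse the reasoning behind Lemma \ref{lem:optimality_conditions_CF}.

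\textbf{Step 1: rewrite the semi-infinite constraint.} Consider first the case $q_F(\bm{x}_F)=-1$, so every perturbed point must be classified `Yes'. We work with the closure of the regions, as the paper does. The constraint then reads $\bm{a}^\top(\bm{x}_{RCF}+\bm{s})-b\ge 0$ for all $\|\bm{s}\|_{N_2}\le\rho$. By the definition of the dual norm, $\min_{\|\bm{s}\|_{N_2}\le\rho}\bm{a}^\top\bm{s}=-\rho\|\bm{a}\|_{N_2}^*$. The constraint is therefore equivalent to the single linear inequality
\[
\bm{a}^\top\bm{x}_{RCF}\ge b+\rho\|\bm{a}\|_{N_2}^*.
\]
If instead $q_F(\bm{x}_F)=1$, the same argument gives $\bm{a}^\top\bm{x}_{RCF}\le b-\rho\|\bm{a}\|_{N_2}^*$. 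Both cases combine into the feasible half-space
\[
q_F(\bm{x}_F)\bigl(\bm{a}^\top\bm{x}_{RCF}-b+q_F(\bm{x}_F)\rho\|\bm{a}\|_{N_2}^*\bigr)\le 0 .
\]
The RCF problem is thus the convex problem of minimizing $f(\bm{x}-\bm{x}_F)$ over a closed half-space with boundary hyperplane $\bm{a}^\top\bm{x}=b-q_F(\bm{x}_F)\rho\|\bm{a}\|_{N_2}^*$.

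\textbf{Step 2: the constraint is active at the optimum.} We must check that $\bm{x}_F$ is infeasible. If $q_F(\bm{x}_F)=-1$, then $\bm{a}^\top\bm{x}_F<b<b+\rho\|\bm{a}\|_{N_2}^*$, since $\rho>0$ and $\bm{a}\ne\bm{0}$. The other case is symmetric. Now suppose $\bm{a}^\top\bm{x}^*_{RCF}$ were strictly inside the half-space. Moving along the segment from $\bm{x}^*_{RCF}$ toward $\bm{x}_F$ keeps feasibility for a short distance and strictly decreases the norm, because the norm is convex and positively homogeneous. This contradicts optimality. Hence $\bm{a}^\top\bm{x}^*_{RCF}=b-q_F(\bm{x}_F)\rho\|\bm{a}\|_{N_2}^*$, which rearranges to the first claimed identity.

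\textbf{Step 3: the subgradient condition.} Since the problem is convex with a single affine constraint, Slater's condition holds trivially (a half-space has nonempty interior). The KKT conditions therefore give a multiplier $\mu\ge 0$ with
\[
\bm{0}\in\partial f(\bm{x}^*_{RCF}-\bm{x}_F)+\mu\, q_F(\bm{x}_F)\,\bm{a}.
\]
Setting $\lambda^*=-\mu\,q_F(\bm{x}_F)$ yields $\lambda^*\bm{a}\in\partial f(\bm{x}^*_{RCF}-\bm{x}_F)$. Alternatively, one can invoke Lemma \ref{lem:optimality_conditions_CF} directly, applied to the shifted hyperplane with parameters $(\bm{a},\,b-q_F(\bm{x}_F)\rho\|\bm{a}\|_{N_2}^*)$. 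Its proof uses only the half-space geometry, so it transfers.

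The main obstacle is showing $\lambda^*\neq 0$. If $\mu=0$, then $\bm{0}\in\partial f(\bm{x}^*_{RCF}-\bm{x}_F)$. For a norm, this happens only at the origin, because $\bm{v}\in\partial f(\bm{y})$ forces $\bm{v}^\top\bm{y}=\|\bm{y}\|$. This would give $\bm{x}^*_{RCF}=\bm{x}_F$, contradicting the hypothesis $\bm{x}^*_{RCF}\neq\bm{x}_F$. Hence $\mu>0$ and $\lambda^*\neq 0$.
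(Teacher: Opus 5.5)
Your proposal is correct and follows essentially the same route as the paper. Both use the dual-norm reformulation of the semi-infinite constraint into a single half-space, then KKT stationarity for one affine constraint, and then conclude $\lambda^*\neq 0$ because $\bm{0}\notin\partial f(\bm{x}^*_{RCF}-\bm{x}_F)$ when $\bm{x}^*_{RCF}\neq\bm{x}_F$. The only differences are minor: you show the constraint is active by a direct descent argument along the segment toward $\bm{x}_F$, whereas the paper gets activity from complementary slackness once $\lambda^*>0$ is known, and you handle both signs of $q_F(\bm{x}_F)$ explicitly instead of fixing one class without loss of generality.
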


As for the classical counterfactuals, the lemma indicates that for differentiable norms, only one query is needed, while the situation with non-differentiable norms is more complicated.

\subsubsection{Differentiable Norms}
For differentiable norms, the subdifferential in Lemma \ref{lem:optimality_conditions_RCF} is a singleton, containing only the gradient of $f$. Hence, in this case the parameters of $\bm{a}$ can directly be extracted. The following theorem shows that we only need one counterfactual query to recover the hyperplane of the classifier.
\begin{theorem}\label{thm:RCF_diff}
Let $q_{RCF}$ be a robust counterfactual mechanism using an arbitrary differentiable norm $f(\bm{x}) = \| \bm{x} \|_{N_1}$ and robustness set $\mathcal S=\{ \bm{s}\mid \| \bm{s}\|_{N_2} \le \rho\}$. One robust counterfactual query and one factual query for an arbitrary data point $\bm{x}_F\in \mathcal X$ are needed to extract the original model's parameters. In particular, for $q_{RCF}(\bm{x}_F)=\bm{x}_{RCF}^*$, let $\hat{\bm{a}} = \nabla f(\bm{x}_{RCF}^*-\bm{x}_F)$ and $\hat b = \hat{\bm{a}}^\top \bm{x}_{RCF}^* +q_F(\bm{x}_F)\rho\| \hat{\bm{a}}\|_{N_2}^*$. Then, it holds that the hyperplane given by $\hat{\bm{a}},\hat{b}$ is equivalent to the original hyperplane with parameters $\bm{a},b$.
\end{theorem}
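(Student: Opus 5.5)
The plan is to combine Lemma \ref{lem:optimality_conditions_RCF} with the fact that a differentiable norm has a singleton subdifferential, just as in the proof of \cref{thm:CF_theorem_diff}. Throughout we assume $\bm{x}_{RCF}^*\neq\bm{x}_F$, so that $\nabla f(\bm{x}_{RCF}^*-\bm{x}_F)$ is defined. If $\bm{x}_F$ already satisfied the robust constraint, it would be its own RCF; in that case we would perturb $\bm{x}_F$ along coordinate directions, as discussed after \cref{thm:CF_theorem_diff}.

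\textbf{Step 1: the direction of $\bm{a}$.} Lemma \ref{lem:optimality_conditions_RCF} gives a scalar $\lambda^*\neq 0$ with $\lambda^*\bm{a}\in\partial f(\bm{x}_{RCF}^*-\bm{x}_F)$. Since $f$ is a norm that is differentiable at the nonzero point $\bm{x}_{RCF}^*-\bm{x}_F$, this subdifferential is the singleton $\{\nabla f(\bm{x}_{RCF}^*-\bm{x}_F)\}$. Hence $\hat{\bm{a}}=\lambda^*\bm{a}$. Moreover $\hat{\bm{a}}\neq\bm{0}$, because subgradients of a norm at a nonzero point have dual norm exactly $1$.

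\textbf{Step 2: the offset, and the main obstacle.} Here the sign of $\lambda^*$ matters, since $\|\cdot\|_{N_2}^*$ is absolutely homogeneous, not homogeneous. Lemma \ref{lem:optimality_conditions_RCF} gives
\[
b=\bm{a}^\top\bm{x}_{RCF}^*+q_F(\bm{x}_F)\rho\|\bm{a}\|_{N_2}^*.
\]
Multiplying by $\lambda^*$ yields
\[
\lambda^*b=\hat{\bm{a}}^\top\bm{x}_{RCF}^*+q_F(\bm{x}_F)\rho\,\sgn(\lambda^*)\|\hat{\bm{a}}\|_{N_2}^*.
\]
So $\hat b=\lambda^*b$ holds exactly when $\lambda^*>0$. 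I therefore need to show $\lambda^*>0$.

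To do so, note that the gradient satisfies $\hat{\bm{a}}^\top(\bm{x}_{RCF}^*-\bm{x}_F)=\|\bm{x}_{RCF}^*-\bm{x}_F\|_{N_1}>0$. Hence $\lambda^*\bm{a}^\top(\bm{x}_{RCF}^*-\bm{x}_F)>0$, and it suffices to show $\bm{a}^\top(\bm{x}_{RCF}^*-\bm{x}_F)>0$. Two cases need checking, and this sign bookkeeping is what I expect to be the delicate point, together with making sure the sign convention matches the one in Lemma \ref{lem:optimality_conditions_RCF}.

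\emph{Case $q_F(\bm{x}_F)=-1$.} Then $\bm{a}^\top\bm{x}_F<b$, and the robust point satisfies $\bm{a}^\top\bm{x}_{RCF}^*=b+\rho\|\bm{a}\|_{N_2}^*>b$. So the inner product is positive.

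\emph{Case $q_F(\bm{x}_F)=1$.} Then $\bm{a}^\top\bm{x}_{RCF}^*=b-\rho\|\bm{a}\|_{N_2}^*$. Since $\bm{x}_{RCF}^*\neq\bm{x}_F$, the factual itself is not robustly on the other side, which gives $\bm{a}^\top\bm{x}_F>\bm{a}^\top\bm{x}_{RCF}^*$. The inner product is then negative. This would force $\lambda^*<0$, contradicting what is needed.

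I would resolve this by rereading how the lemma's $\lambda^*$ is produced, namely the sign of the KKT multiplier. I expect $\lambda^*$ to carry the factor $-q_F(\bm{x}_F)$. Equivalently, in the \textit{`Yes'} case $\hat{\bm{a}}$ points along $-\bm{a}$, and the formula for $\hat b$ must be read with $\sgn(\lambda^*)q_F(\bm{x}_F)=-1$ in both cases.

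\textbf{Step 3: conclusion.} Carrying out that check, I would show that $\sgn(\lambda^*)\,q_F(\bm{x}_F)$ combines consistently with the stated formula for $\hat b$, possibly after absorbing a global sign, so that $(\hat{\bm{a}},\hat b)=\lambda^*(\bm{a},b)$ in both cases. This proves the equivalence of the hyperplanes. The factual query is exactly what supplies $q_F(\bm{x}_F)$, which fixes the sign of the $\rho$-term and therefore determines $\hat b$; this justifies the count of one RCF query and one factual query.
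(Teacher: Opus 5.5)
Your Step~2 computation is correct, but the sign problem it exposes is not a convention that rereading Lemma~\ref{lem:optimality_conditions_RCF} will fix. Step~3 cannot be carried out, because for a \textit{`Yes'} factual the stated formula for $\hat b$ is false. For $q_F(\bm{x}_F)=1$ the active constraint is $\bm{a}^\top\bm{x}+\rho\|\bm{a}\|_{N_2}^*\le b$. Stationarity then reads $-\mu\bm{a}\in\partial f(\bm{x}_{RCF}^*-\bm{x}_F)$ with $\mu>0$, so $\lambda^*=-\mu<0$, exactly as your inner-product argument shows. Using $\bm{a}^\top\bm{x}_{RCF}^*=b-\rho\|\bm{a}\|_{N_2}^*$, you get
\[
\hat b=\lambda^*\bm{a}^\top\bm{x}_{RCF}^*+|\lambda^*|\rho\|\bm{a}\|_{N_2}^*=\lambda^*b+2\rho\|\hat{\bm{a}}\|_{N_2}^*\neq\lambda^*b .
\]
No global sign can be absorbed: $\hat{\bm{a}}=\lambda^*\bm{a}$ already fixes the proportionality constant at $\lambda^*$, so the recovered hyperplane is a parallel translate of the true one.

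A concrete counterexample: take $\bm{a}=(1,0)$, $b=0$, $N_1=N_2=\ell_2$, $\rho=1$ and the \textit{`Yes'} point $\bm{x}_F=(3,0)$. Then $\bm{x}_{RCF}^*=(-1,0)$, $\hat{\bm{a}}=(-1,0)$ and $\hat b=1+1=2$, which gives the hyperplane $x_1=-2$ instead of $x_1=0$.

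The paper's own proof has exactly this hole. It asserts ``there exists $\lambda^*>0$'', but Lemma~\ref{lem:optimality_conditions_RCF} only gives $\lambda^*\neq 0$; positivity comes from the w.l.o.g.\ \textit{`No'} case in the lemma's proof. The paper then uses $\lambda^*\|\bm{a}\|_{N_2}^*=\|\lambda^*\bm{a}\|_{N_2}^*$, which needs $\lambda^*>0$.

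What you can actually prove is the following. The theorem holds for $q_F(\bm{x}_F)=-1$; this is your first case, and it is all the paper's argument covers. In general, since $\lambda^*q_F(\bm{x}_F)=-|\lambda^*|$ in both cases,
\[
\lambda^*b=\hat{\bm{a}}^\top\bm{x}_{RCF}^*+\lambda^*q_F(\bm{x}_F)\rho\|\bm{a}\|_{N_2}^*=\hat{\bm{a}}^\top\bm{x}_{RCF}^*-\rho\|\hat{\bm{a}}\|_{N_2}^* .
\]
So $\hat b:=\hat{\bm{a}}^\top\bm{x}_{RCF}^*-\rho\|\hat{\bm{a}}\|_{N_2}^*$ works for every factual. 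It does not even use $q_F(\bm{x}_F)$; the factual query is then only needed to orient the classifier. Equivalently, you can keep the stated $\hat b$ and instead take $\hat{\bm{a}}:=-q_F(\bm{x}_F)\nabla f(\bm{x}_{RCF}^*-\bm{x}_F)$, which is a positive multiple of $\bm{a}$.

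Your remark that the formula ``must be read with $\sgn(\lambda^*)q_F(\bm{x}_F)=-1$'' is in effect this corrected formula. It is a different statement from the one given, not a reading of it, and your proof should state it explicitly rather than promise that the signs will combine.

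Finally, your contingency for $\bm{x}_{RCF}^*=\bm{x}_F$ is unnecessary. Since $\rho>0$ and $\bm{a}\neq\bm{0}$, $\bm{x}_F$ never satisfies the robust constraint for the opposite class.
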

Note that \cref{thm:CF_theorem_diff} finds an equivalent hyperplane as used by the original classifier, but it could be that the two different classification regions are interchanged. To obtain an equivalent classifier, it suffices to check the outcome of a single factual query, which is already done in \cref{thm:CF_theorem_diff}. Recall that one counterfactual query sufficed for extracting the original model parameters, but an additional factual query is required to determine the classification of the regions. Therefore, extracting the model parameters requires one more query when using robust counterfactuals, while finding an equivalent model requires the same number of queries.

\subsubsection{Non-differentiable Norms}
For non-differentiable norms, the subdifferential set in Lemma \ref{lem:optimality_conditions_RCF} can be larger than a singleton, so \cref{thm:RCF_diff} no longer holds. In order to extract the hyperplane, we follow a similar procedure as for the counterfactual queries. The robust counterpart of Corollary \ref{cor:optimality_conditions_CF_non_differentiable_norm} is derived in the following.
\begin{corollary}\label{cor:optimality_conditions_RCF_non_differentiable_norm}
Let $\bm{x}_F\in \mathcal X$ be an arbitrary factual instance and $\bm{q}_{RCF}$ be a robust counterfactual query under an arbitrary norm $\| \bm{x} \|_{N_1}$ and robustness set $\mathcal S=\{ \bm{s}\mid \|\bm{s}\|_{N_2}\le \rho\}$. Then, $\bm{x}_{RCF}^*\neq \bm{x}_F$ is an optimal robust counterfactual if and only if there exists a $\lambda\in\mathbb R\setminus \{ 0\}$ such that
\begin{align*}
    &\bm{a}^\top \bm{x}_{RCF}^* + q_F(\bm{x}_F)\rho \|\bm{a}\|_{N_2}^* = b, \\
    &  \lambda \bm{a}^\top (\bm{x}_{RCF}^* - \bm{x}_F) = \|\bm{x}_{RCF}^* - \bm{x}_F\|_{N_1}, \\
    & \|\lambda \bm{a} \|_{N_1}^* \le 1.
\end{align*}
\end{corollary}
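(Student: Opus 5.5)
The plan is to reduce the robust problem \eqref{eq:robust_CF_problem} to an ordinary counterfactual problem for a shifted hyperplane. Then the argument behind Corollary~\ref{cor:optimality_conditions_CF_non_differentiable_norm} applies verbatim.

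Assume first $q_F(\bm{x}_F)=-1$, so that $\bm{a}^\top\bm{x}_F-b<0$; the case $q_F(\bm{x}_F)=1$ is symmetric. Working with closures, the robust constraint requires $\bm{a}^\top(\bm{x}_{RCF}+\bm{s})-b\ge 0$ for all $\|\bm{s}\|_{N_2}\le\rho$. Minimizing over $\bm{s}$ and using the definition of the dual norm, $\min_{\|\bm{s}\|_{N_2}\le\rho}\bm{a}^\top\bm{s}=-\rho\|\bm{a}\|_{N_2}^*$. The constraint is therefore equivalent to
\[
\bm{a}^\top\bm{x}_{RCF}\ge b+\rho\|\bm{a}\|_{N_2}^* =: b',
\]
and analogously it becomes $\bm{a}^\top\bm{x}_{RCF}\le b-\rho\|\bm{a}\|_{N_2}^*$ when $q_F(\bm{x}_F)=1$. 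In both cases $b'=b-q_F(\bm{x}_F)\rho\|\bm{a}\|_{N_2}^*$. Thus \eqref{eq:robust_CF_problem} is exactly the counterfactual problem \eqref{eq:CF_problem} for the hyperplane $(\bm{a},b')$, with $\bm{x}_F$ lying strictly on the original side because $\bm{a}^\top\bm{x}_F<b<b'$.

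Next I characterize optimality of this convex problem, a norm minimized over a halfspace. Since the feasible set is a halfspace, Slater's condition holds, and the KKT conditions are necessary and sufficient. They read: $\bm{x}^*$ is optimal iff it is feasible and there is $\mu\ge 0$ with $\mu\bm{a}\in\partial f(\bm{x}^*-\bm{x}_F)$, together with complementary slackness $\mu(\bm{a}^\top\bm{x}^*-b')=0$.

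Because $\bm{x}^*\neq\bm{x}_F$, we have $\bm{0}\notin\partial f(\bm{x}^*-\bm{x}_F)$, which forces $\mu\neq 0$. Complementary slackness then gives the active constraint $\bm{a}^\top\bm{x}^*=b'$, which is the first stated equation. For the case $q_F(\bm{x}_F)=1$ the sign of the multiplier flips, which is why $\lambda$ is only claimed to lie in $\mathbb R\setminus\{0\}$.

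Finally I substitute the norm subdifferential $\partial f(\bm{x})=\{\bm{v}:\bm{v}^\top\bm{x}=\|\bm{x}\|_{N_1},\ \|\bm{v}\|_{N_1}^*\le1\}$ with $\bm{v}=\lambda\bm{a}$. This yields the second and third conditions.

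For the converse, suppose the three conditions hold. The first shows $\bm{x}_{RCF}^*$ lies on the shifted boundary, hence is feasible. The remaining two give $\lambda\bm{a}\in\partial f(\bm{x}^*_{RCF}-\bm{x}_F)$. The sign of $\lambda$ is automatically correct: $\lambda\bm{a}^\top(\bm{x}^*-\bm{x}_F)=\|\bm{x}^*-\bm{x}_F\|_{N_1}>0$ and $\bm{a}^\top(\bm{x}^*-\bm{x}_F)=b'-\bm{a}^\top\bm{x}_F$ has the proper sign, so sufficiency of KKT applies.

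Alternatively, sufficiency can be checked directly. For any feasible $\bm{y}$,
\[
\|\bm{y}-\bm{x}_F\|_{N_1}\ge\lambda\bm{a}^\top(\bm{y}-\bm{x}_F)\ge\lambda\bm{a}^\top(\bm{x}^*-\bm{x}_F)=\|\bm{x}^*-\bm{x}_F\|_{N_1}.
\]
The first inequality holds because the dual norm bound makes $\lambda\bm{a}$ a norm-1 minorant. The second holds because feasibility together with the sign of $\lambda$ gives $\lambda\bm{a}^\top\bm{y}\ge\lambda b'$.

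The main obstacle is keeping track of the signs in both cases, $q_F=\pm1$, and the closure convention, so that a single formula with $q_F(\bm{x}_F)$ covers both cases and the sign of $\lambda$ is consistent.
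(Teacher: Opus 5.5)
Your proof is correct and follows the paper's route. You rewrite the robust constraint via the dual norm as a single halfspace constraint, apply KKT, use $\bm{x}^*_{RCF}\neq\bm{x}_F$ to get a nonzero multiplier and hence an active constraint, and then substitute the subdifferential formula for the norm. Your explicit sufficiency argument, namely the sign check on $\lambda$ and the direct bound $\|\bm{y}-\bm{x}_F\|_{N_1}\ge\lambda\bm{a}^\top(\bm{y}-\bm{x}_F)\ge\|\bm{x}^*_{RCF}-\bm{x}_F\|_{N_1}$, fills in the ``if'' direction. The paper leaves that direction implicit, since its Lemma~\ref{lem:optimality_conditions_RCF} proves only necessity.
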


To retrieve the original hyperplane, we need additional robust counterfactual queries resulting in a system of equalities which has solution $\bm{a},b$. Unlike counterfactuals, robust counterfactuals do not lie on the hyperplane. When we have a set of $p$ linearly independent RCF points $\{\bm{x}^{(1)}_{RCF}, \hdots, \bm{x}^{(p)}_{RCF}\}$, we have the following system of equations
\begin{align}\label{eq:system_eq_RCF_full}
\bm{a}^\top \bm{x}^{(i)}_{RCF} - b+ q_F(\bm{x}^{(i)}_F)\rho\|\bm{a}\|_{N_2}^* & = 0 \quad  \forall i=1,\hdots, p,    
\end{align}
which is nonlinear in $\bm{a},b$ due to the term $\|\bm{a}\|_{N_2}^*$. When we know the classification of the original factual, we know $q_F(\bm{x}^{(i)}_F)$. After scaling we can also assume $\|\bm{a}\|_{N_2}^*=1$ and \eqref{eq:system_eq_RCF_full} can be reformulated as
\begin{align}\label{eq:system_eq_RCF}
\bm{a}^\top \bm{x}^{(i)}_{RCF} - b+ q_F(\bm{x}^{(i)}_F)\rho & = 0 \quad \forall i=1,\hdots, p, \\
\| \bm{a}\|^*_{N_2} & = 1.\notag
\end{align}
The first $p$ equations have a one-dimensional solution space since it is a linear system in dimension $p+1$ with $p$ linearly independent equations. By solving the latter system and afterwards scaling the solution to $\|\bm{a}\|_{N_2}^*=1$, we retrieve a hyperplane equivalent to the original classifier. Note that the latter discussion indicates that for each of the constructed factual points, we need to perform a factual query, which was not needed for the classical counterfactual case.

\begin{remark}
In the case that $\bm{0}$ is a robust counterfactual for an arbitrary point $\bm{x}\in\mathcal{X}$, we obtain by the optimality conditions that $b = - q_F(\bm{x}_F)\rho\|\bm{a}\|_{N_2}^*$. If we know the original classification of $\bm{x}$, the solution space of $\bm{a},b$ boils down to the solution space of $\bm{a}$, which is one-dimensional when we have $p-1$ linearly independent points $\{\bm{x}^{(1)},\ldots, \bm{x}^{(p-1)}\}$ such that
\begin{align*}
\bm{a}^\top \bm{x}^{(i)}-b+ q_F(\bm{x}_F^{(i)})\rho  & = 0 \quad \forall i=1,\ldots, p-1,\\
\| \bm{a}\|^*_{N_2} & = 1.\notag
\end{align*}
Thus, if $\bm{0}$ is a robust counterfactual, then, by finding $p-1$ linearly independent counterfactual vectors, the optimality conditions in Lemma \ref{cor:optimality_conditions_RCF_non_differentiable_norm} still ensure that we retrieve a hyperplane equivalent to the original classifier.    
\end{remark}

The latter discussion indicates that we have to find a set of $p$ linearly independent RCF points. To achieve this, we have to smartly choose the factual instances $\bm{x}_F^{(i)}$. To approach this we show the following lemma.
\begin{lemma}\label{lem:RCF_direction}
    Let $\bm{x}_F\in \mathcal X$ be an arbitrary factual instance and $q_{RCF}(\bm{x}_F) = \bm{x}_{RCF}^*$ under an arbitrary norm $ \| \cdot \|_{N_1}$ and robustness set $\mathcal S=\{ \bm{s}\mid \|\bm{s}\|_{N_2}\le \rho\}$. Then,
    \begin{equation*}\label{eq:RCF_direction}
        \bm{x}_{RCF}^*=\bm{x}_F+d_{\bm{x}_F}\bm{v},
    \end{equation*}
    with $d_{\bm{x}_F} = \frac{b-\bm{a}^{\top}\bm{x}_F- q_F(\bm{x}_F)\rho\|\bm{a}\|_{N_2}^*}{\|\bm{a}\|_{N_1}^*}$ is an optimal robust counterfactual if and only if $\|\bm{v}\|_{N_1}\leq 1$ and $\bm{a}^{\top}\bm{v} =\|\bm{a}\|_{N_1}^*$.
\end{lemma}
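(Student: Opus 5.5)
The plan is to reduce the robust problem \eqref{eq:robust_CF_problem} to a classical counterfactual problem against a shifted hyperplane, and then repeat the argument behind Lemma \ref{lem:CF_direction}. We treat the case $q_F(\bm{x}_F)=-1$, i.e.\ $\bm{a}^\top\bm{x}_F<b$; the other case follows by flipping signs, with closures of the classification regions used throughout as the paper stipulates.

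The first step rewrites the robust constraint. The condition $h_{\bm{a},b}(\bm{x}_{RCF}+\bm{s})=1$ for all $\bm{s}\in\mathcal S$ is equivalent to $\min_{\|\bm{s}\|_{N_2}\le\rho}\bm{a}^\top(\bm{x}_{RCF}+\bm{s})\ge b$. By the definition of the dual norm this minimum equals $\bm{a}^\top\bm{x}_{RCF}-\rho\|\bm{a}\|_{N_2}^*$. Hence the feasible set is the half-space
\[
\bm{a}^\top\bm{x}_{RCF}\ge b+\rho\|\bm{a}\|_{N_2}^* = b-q_F(\bm{x}_F)\rho\|\bm{a}\|_{N_2}^*=:b'.
\]
So \eqref{eq:robust_CF_problem} becomes the problem of minimizing the $N_1$-distance from $\bm{x}_F$ to a half-space bounded by the hyperplane $\bm{a}^\top\bm{x}=b'$, with $\bm{x}_F$ strictly on the other side (since $\rho>0$).

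The second step computes the optimal value. For any feasible $\bm{y}$, H\"older's inequality for the dual pair gives
\[
\|\bm{a}\|_{N_1}^*\,\|\bm{y}-\bm{x}_F\|_{N_1}\ge \bm{a}^\top(\bm{y}-\bm{x}_F)\ge b'-\bm{a}^\top\bm{x}_F .
\]
Therefore the optimal value is at least $d:=(b'-\bm{a}^\top\bm{x}_F)/\|\bm{a}\|_{N_1}^*>0$, and $d$ coincides with $d_{\bm{x}_F}$ in the statement. Attainment follows from \eqref{eq:max_dir}: the maximum over the compact unit ball is attained at some $\bm{v}$ with $\|\bm{v}\|_{N_1}\le1$ and $\bm{a}^\top\bm{v}=\|\bm{a}\|_{N_1}^*$. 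Then $\bm{x}_F+d\bm{v}$ is feasible, since $\bm{a}^\top(\bm{x}_F+d\bm{v})=b'$, and it has distance at most $d$, so the optimal value equals $d$.

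The third step proves the equivalence. For the ``if'' direction, any $\bm{v}$ with $\|\bm{v}\|_{N_1}\le1$ and $\bm{a}^\top\bm{v}=\|\bm{a}\|_{N_1}^*$ gives a feasible point $\bm{x}_F+d\bm{v}$ with distance at most $d$, so this point is optimal. For the ``only if'' direction, let $\bm{x}^*$ be optimal and set $\bm{v}:=(\bm{x}^*-\bm{x}_F)/d$. Then $\|\bm{v}\|_{N_1}=1$. The H\"older chain above must hold with equality throughout, which forces $\bm{a}^\top(\bm{x}^*-\bm{x}_F)=d\|\bm{a}\|_{N_1}^*$, i.e.\ $\bm{a}^\top\bm{v}=\|\bm{a}\|_{N_1}^*$.

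The main obstacle is only the sign bookkeeping in the first step: the shift $-q_F(\bm{x}_F)\rho\|\bm{a}\|_{N_2}^*$ must be correct in both cases, and $d_{\bm{x}_F}$ must be correctly signed. In the case $q_F(\bm{x}_F)=1$, $d_{\bm{x}_F}$ is negative. There the chain uses $-\bm{a}$ and gives $\|\bm{v}\|_{N_1}=1$ with $\bm{a}^\top\bm{v}=\|\bm{a}\|_{N_1}^*$ in exactly the same way. Alternatively, this case could be obtained by invoking Lemma \ref{lem:CF_direction} for the shifted hyperplane $(\bm{a},b')$.
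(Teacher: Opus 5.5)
Your proof is correct, but it takes a different route from the paper's. The paper proves both directions by checking the optimality conditions of Corollary~\ref{cor:optimality_conditions_RCF_non_differentiable_norm}:
\begin{itemize}
\item for the \emph{if} part it exhibits the multiplier $\lambda=\sgn(d_{\bm{x}_F})\|\bm{v}\|_{N_1}/\bm{a}^\top\bm{v}$ and verifies $\|\lambda\bm{a}\|_{N_1}^*\le 1$ and $\lambda\bm{a}^\top(\bm{x}_{RCF}^*-\bm{x}_F)=\|\bm{x}_{RCF}^*-\bm{x}_F\|_{N_1}$;
\item for the \emph{only if} part it reads off $\bm{a}^\top\bm{v}=\|\bm{a}\|_{N_1}^*$ from the active-constraint equation, and $\|\bm{v}\|_{N_1}\le 1$ from the dual-norm bound on $\lambda\bm{a}$.
\end{itemize}
This is essentially a line-by-line copy of the proof of Lemma~\ref{lem:CF_direction}, which keeps the paper uniform. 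It has two weak spots:
\begin{itemize}
\item It relies on the sufficiency half of the corollary. That half is true for this convex problem, but the paper does not argue it; Lemma~\ref{lem:optimality_conditions_RCF} only gives necessity.
\item It asserts $d_{\bm{x}_F}>0$, which fails when $q_F(\bm{x}_F)=1$. This is harmless only because $\sgn(d_{\bm{x}_F})$ is carried through.
\end{itemize}

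You instead argue by weak duality directly. The H\"older chain bounds the optimal value below by $|d_{\bm{x}_F}|$. A maximizer of \eqref{eq:max_dir} attains this bound, which gives the \emph{if} direction. Tightness of the chain at an optimum forces $\bm{a}^\top\bm{v}=\|\bm{a}\|_{N_1}^*$, which gives the \emph{only if} direction. This is self-contained and needs no KKT theory. It also computes the optimal value explicitly, handles the sign of $d_{\bm{x}_F}$ correctly, and shows that $\|\bm{v}\|_{N_1}=1$ is in fact forced.

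Your closing remark deserves to be the headline. After taking closures, the robust feasible set is exactly the closed half-space beyond the shifted hyperplane $\bm{a}^\top\bm{x}=b'$, with $\bm{x}_F$ strictly on the other side. So the lemma is literally Lemma~\ref{lem:CF_direction} applied to $(\bm{a},b')$. This also explains why the direction $\bm{v}$ coincides with the one for classical counterfactuals.
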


The lemma shows, similar to the results for counterfactuals, that there exists a direction $\bm{v}$ such that for any factual point, there exists a robust counterfactual which is the factual point perturbed into this direction $\bm{v}$. Especially, the direction is independent of the factual instance. From the definition of the dual norm it follows that
\[
\|\bm{a}\|^*_{N_1}  = \max_{\| \bm{v}\|_{N_1} \le 1} \bm{a}^\top \bm{v},
\]
and hence, we know that the direction $\bm{v}$ must be a maximizer of the latter problem. Especially, the direction $\bm{v}$ is the same as for classical counterfactuals.

By combining Lemma \ref{lem:CF_basis} with Lemma \ref{lem:RCF_direction}, we can apply the same algorithm used for counterfactuals to recover the hyperplane using robust counterfactuals when norm-1 is non-differentiable. Specifically, in \cref{alg:CF_nondiff}, we replace $q_{CF}$ with both $q_{RCF}$ and $q_F$. Consequently, we need $p+1$ robust counterfactual queries and $p+1$ factual queries to solve the system of equations (\ref{eq:system_eq_RCF}) and obtain hyperplane parameters $\hat{\bm{a}},\hat{b}$ that are equivalent to the original hyperplane parameters $\bm{a},b$.

\begin{corollary}\label{crl:non_diff_RCF_main}
   Let $q_{RCF}$ be a robust counterfactual mechanism using an arbitrary non-differentiable norm $f(\bm{x}) = \|\bm{x}\|_{N_1}$ and robustness set $\mathcal{S} = \{\bm{s} \mid \|\bm{s}\|_{N_2}\leq\rho\}$. Then, with only $p+1$ robust counterfactual queries and $p+1$ factual queries, we can recover hyperplane parameters $\hat{\bm{a}}, \hat{b}$ which are equivalent to the original hyperplane parameters $\bm{a}, b$.
\end{corollary}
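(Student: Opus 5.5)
The plan is to mirror the proof of \cref{thm:CF_nondiff}, running \cref{alg:CF_nondiff} with every call to $q_{CF}$ replaced by a call to $q_{RCF}$ followed by a factual query $q_F$ at the same factual point. The factual labels $q_F(\bm{x}_F^{(i)})$ are exactly what is needed to write down the linear system \eqref{eq:system_eq_RCF}. The proof then has three parts: extract a valid direction $\bm{v}$ from one query, show that the robust counterfactuals of a basis containing $\bm{v}$ are linearly independent, and show that the resulting system pins down $(\bm{a},b)$ up to scaling.

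\textbf{Step 1: extracting the direction.} Query $q_{RCF}(\bm{e}^1)$ and $q_F(\bm{e}^1)$. Assume, analogously to \cref{thm:CF_nondiff}, that $\bm{x}_{RCF}^{(1)*}\neq\bm{e}^1$. Set $\bm{w}:=\bm{x}_{RCF}^{(1)*}-\bm{e}^1$. By \cref{lem:RCF_direction}, $\bm{w}=d_{\bm{e}^1}\bm{v}$, where $\bm{v}$ satisfies $\|\bm{v}\|_{N_1}\le 1$ and $\bm{a}^\top\bm{v}=\|\bm{a}\|_{N_1}^*$.

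Only the direction of $\bm{w}$ matters, since $d_{\bm{x}_F}$ is a scalar that depends on $\bm{x}_F$. We may therefore normalize to $\bm{v}:=\bm{w}/\|\bm{w}\|_{N_1}$. This normalized vector still satisfies the conditions of \cref{lem:RCF_direction}: $\|\bm{w}\|_{N_1}=|d_{\bm{e}^1}|\,\|\bm{v}\|_{N_1}$, and the direction maximizes \eqref{eq:max_dir} exactly when $\|\bm{v}\|_{N_1}=1$, so $\bm{w}/\|\bm{w}\|_{N_1}$ equals $\pm\bm{v}$.

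Consequently, for every factual $\bm{x}_F$, the point $\bm{x}_F+d_{\bm{x}_F}\bm{v}$ is an optimal robust counterfactual. As in the discussion after \cref{lem:CF_direction}, any returned optimal robust counterfactual can be rescaled into this direction. So we may assume that all subsequent queries return counterfactuals of this form.

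\textbf{Step 2: linear independence.} Complete $\bm{v}$ to a basis $V=\{\bm{v},\bm{v}^2,\dots,\bm{v}^p\}$ by Gram--Schmidt, and query $q_{RCF}$ and $q_F$ on each element. This gives $p$ more of each query type, for totals of $p+1$ robust counterfactual queries and $p+1$ factual queries.

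The robust counterfactuals are $\bm{v}^i+d_{\bm{v}^i}\bm{v}$. This is the same structural form as in \cref{lem:CF_basis}; only the scalars $d$ differ. Let $\bm{v}_{RCF}=(1+d_{\bm{v}})\bm{v}$, and recall that $\bm{v}^i_{RCF}=\bm{v}^i+d_{\bm{v}^i}\bm{v}$. Consider a vanishing combination $c_1\bm{v}_{RCF}+\sum_i c_i\bm{v}^i_{RCF}=\bm{0}$. Expanding in the basis $V$ and comparing coefficients forces $c_i=0$ for all $i\ge 2$, and then $c_1(1+d_{\bm{v}})=0$.

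Hence the robust counterfactuals are linearly independent whenever $\bm{v}_{RCF}\neq\bm{0}$. In the degenerate case $\bm{v}_{RCF}=\bm{0}$, the remark preceding \cref{lem:RCF_direction} applies: it needs only the $p-1$ independent points $\bm{v}^i_{RCF}$ together with $b=-q_F(\bm{v})\rho\|\bm{a}\|_{N_2}^*$.

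\textbf{Step 3: recovering the hyperplane.} Each robust counterfactual, with its factual label, satisfies the first condition of \cref{cor:optimality_conditions_RCF_non_differentiable_norm}. Normalize so that $\|\bm{a}\|_{N_2}^*=1$; this is a positive rescaling, so it preserves the hyperplane and its orientation. We then obtain $p$ linear equations \eqref{eq:system_eq_RCF} in the $p+1$ unknowns $(\bm{a},b)$.

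The main obstacle is this step. The system is inhomogeneous, with right-hand sides $-q_F\rho$, so it is not immediate that its solution set is a single point. I would argue as follows. Because the $\bm{x}^{(i)}_{RCF}$ are linearly independent, the coefficient matrix $[\bm{x}^{(i)\top}_{RCF},\,-1]$ has rank $p$, so the solution set is an affine line $\{(\bm{a}_0,b_0)+t(\bm{a}_1,b_1)\}$. The true parameters lie on this line and satisfy $\|\bm{a}\|_{N_2}^*=1$. It remains to pick out the correct point on the line.

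I would try to show this directly from the system: subtracting pairs of equations eliminates $b$. If all $q_F$ labels are equal, the differences give $\bm{a}^\top(\bm{x}^{(i)}_{RCF}-\bm{x}^{(1)}_{RCF})=0$. This fixes the direction of $\bm{a}$ up to sign, the norm condition fixes its scale, and one equation then fixes $b$. The sign is resolved by the known labels, which must be consistent with the inequality side of the robust counterfactual. The case of mixed labels should be handled similarly, using the differences with their label-dependent constants. Any remaining ambiguity would be resolved by the factual labels we already have: the correct candidate is the one that classifies each $\bm{x}_F^{(i)}$ consistently with $q_F(\bm{x}_F^{(i)})$.

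The output $(\hat{\bm{a}},\hat b)$ is then a positive multiple of $(\bm{a},b)$. Hence it is equivalent to the original hyperplane, and it is obtained with $p+1$ robust counterfactual queries and $p+1$ factual queries, as claimed.
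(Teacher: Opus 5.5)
Your Steps 1--2 follow the paper's route: Algorithm~\ref{alg:CF_nondiff} with $q_{CF}$ replaced by $q_{RCF}$ and $q_F$, together with Lemmas~\ref{lem:RCF_direction} and~\ref{lem:CF_basis}. You also correctly see what the paper's ``solve, then rescale'' glosses over. After fixing $\|\bm a\|_{N_2}^*=1$, the system \eqref{eq:system_eq_RCF} is inhomogeneous, so its solutions form an affine line, and that line typically meets $\{\|\bm a\|_{N_2}^*=1\}$ in two points. Your equal-label argument settles this correctly.

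\textbf{The gap is the mixed-label case, which you do not prove.} There the differences $\bm a^\top(\bm x^{(i)}_{RCF}-\bm x^{(j)}_{RCF})=\pm 2\rho$ cut out an affine line in $\bm a$-space that misses the origin, so they do not fix the direction of $\bm a$. Your tie-breaker (keep the candidate that labels every $\bm x_F^{(i)}$ correctly) also fails. Let $\bar{\bm v}$ be the unit direction with $\bm a^\top\bar{\bm v}=\|\bm a\|^*_{N_1}$. Given its equation, a candidate $(\bm a',b')$ labels $\bm x_F^{(i)}$ correctly iff $\|\bm x^{(i)}_{RCF}-\bm x^{(i)}_F\|_{N_1}\,\bm a'^\top\bar{\bm v}\ge\rho$ (strictly for \textit{`No'}), which is a weak condition.

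Here is a concrete instance. Let $p=2$, $N_1=N_2=\ell_\infty$, $\rho=2$, $\bm a=(\tfrac12,\tfrac12)$, $b=-1$.
\begin{itemize}
\item $\bm e^1$ is \textit{`Yes'} with $q_{RCF}(\bm e^1)=(-\tfrac52,-\tfrac72)$, so $\bm v=(-\tfrac72,-\tfrac72)$.
\item $\bm v$ is \textit{`No'} with robust counterfactual $(1,1)$.
\item Gram--Schmidt on $(\bm v,\bm e^1)$ gives $\bm v^2=(\tfrac12,-\tfrac12)$, which is \textit{`Yes'} with robust counterfactual $(-\tfrac52,-\tfrac72)$.
\end{itemize}
With $\|\bm a\|_1=1$ your system is $a_1+a_2-b=2$ and $-\tfrac52a_1-\tfrac72a_2-b=-2$. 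Besides the true parameters, it is solved by $\bm a'=(-\tfrac1{16},\tfrac{15}{16})$, $b'=-\tfrac98$. This candidate gives $\bm e^1,\bm v,\bm v^2$ the values $\tfrac{17}{16},-\tfrac{31}{16},\tfrac58$, i.e.\ exactly the observed labels. The $\bm e^1$-equation coincides with the $\bm v^2$-equation, so it does not help either.

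What excludes this $\bm a'$ is information you derive in Step 1 but never impose in Step 3: $\hat{\bm a}^\top\bar{\bm v}=\|\hat{\bm a}\|^*_{N_1}$. Here the $\ell_\infty$-maximizer for $\bm a'$ is $(-1,1)$, not $(1,1)$.

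Even that constraint is not enough in general. Take $N_1=\ell_\infty$, $N_2=\ell_2$, $\rho=10$, $\bm a=(1,1)/\sqrt2$, $b=-5$, with the same construction. Again $\bm v^2=(\tfrac12,-\tfrac12)$, and its robust counterfactual equals that of $\bm e^1$. The second solution is the reflection of $\bm a$ in the line spanned by $(10\sqrt2-\tfrac12,\,10\sqrt2+\tfrac12)$. It has positive entries, so it lies in the same full-dimensional normal cone at the vertex $(1,1)$. It reproduces every factual label and every robust counterfactual of the procedure, so no post-processing can separate the two hyperplanes. The paper's sketch has the same hole; its worked example already needs an ad hoc check to discard $a_1=-1$.

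\textbf{A fix within the query budget.} Force all $p$ equations onto the same side, so that your equal-label argument applies.
\begin{itemize}
\item Take a known \textit{`No'} point $\bm x_0$: $\bm e^1$ itself, or its robust counterfactual if $\bm e^1$ is \textit{`Yes'}.
\item Orient $\bar{\bm v}$ using $q_F(\bm e^1)$.
\item Query $\bm x_0$ and $\bm x_0+\bm v^i-\|\bm v^i\|_{N_1}\bar{\bm v}$ for $i=2,\dots,p$.
\end{itemize}
These points are all \textit{`No'}, because $\bm a^\top\bm v^i\le\|\bm a\|^*_{N_1}\|\bm v^i\|_{N_1}$. The differences of their (rescaled) robust counterfactuals are $\bm v^i+c_i\bar{\bm v}$, hence linearly independent. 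This determines $\bm a$ up to a nonzero factor. The sign then follows from $\bm a^\top\bar{\bm v}>0$, and $b$ follows from any one equation.
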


\begin{example}
We demonstrate our approach using a simple two-dimensional example shown in \cref{fig:RCF_Example_2D}. Consider the hyperplane given by $2x_1-x_2=3$, \textit{i.e.}, $\bm{a}=(2,-1), b=3$. We examine a counterfactual mechanism with $N_1=\ell_{\infty}$ as \textit{minimal edit} and robustness set given by $\mathcal{S}=\{\bm{s}\mid\|\bm{s}\|_{N_2}\leq 1\}$ with $N_2 = \ell_1$. First, we ask a factual query for $(3,0)$, which outputs \textit{`No'}. A robust counterfactual query for the point $(3,0)$ yields $q_{RCF}(3,0)=\frac{1}{3}(4,5)$. This gives us the same direction as for the counterfactuals, $(-1,1)$. Next,  a factual query $(-1,1)$ yields \textit{`Yes'} and a robust counterfactual query for the point $(-1,1)$ outputs $q_{RCF}(-1,1)=\frac{1}{3}(5,-5)$. Note that we have obtained the following equations:
\[
    \frac{1}{3}(4a_1+5a_2)-b-\|\bm{a}\|_{\infty}=0\ \ \ \text{ and } \ \ \  \frac{1}{3}(5a_1-5a_2)-b+\|\bm{a}\|_{\infty}=0.
\]
By setting $\|\bm{a}\|_{\infty}=1$, we obtain the solution $b=\frac{3}{2} a_1, a_2=\frac{a_1}{10}-\frac{3}{5}$. Moreover, since we enforce $\|\bm{a}\|_{\infty}=1$ we have $\max\{|a_1|,|\frac{a_1}{10}-\frac{3}{5}|\}=1$ implying that $a_1=-1$ or $a_1=1$. For $a_1=-1$, we have the hyperplane $\bm{a}=(-1, -\frac{7}{10}), b= -\frac{3}{2}$. Then, for point $\bm{x}=\frac{1}{3}(4,5)$, we would have $\bm{a}^{\top}\bm{x}-b = -\frac{4}{3}+\frac{3}{2}=\frac{1}{9}>0$ which contradicts with $\bm{x}$ being a robust counterfactual of $(3,0)$. Therefore, we conclude $a_1=1$ giving us the hyperplane $\bm{a}=(1, -\frac{1}{2}), b= \frac{3}{2}$, which is an equivalent hyperplane to the original one.
\begin{figure}[h]
    \centering
    \includegraphics[width=0.7\linewidth]{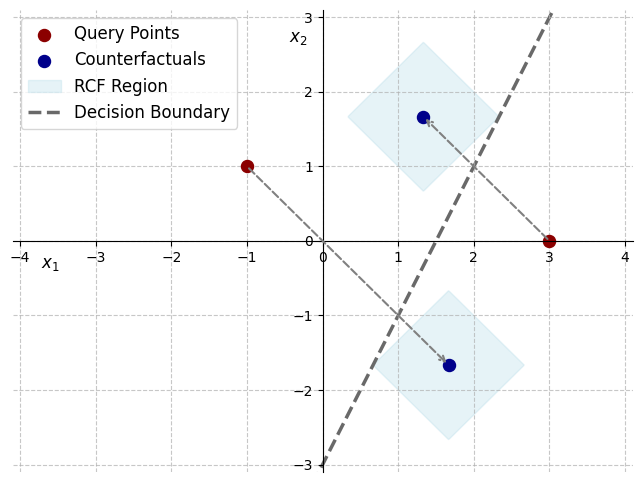}
    \caption{Example of extracting the hyperplane using robust counterfactual queries with $N_1=\ell_{\infty}$ and robustness set given by $\mathcal{S}=\{\bm{s}\mid\|\bm{s}\|_{N_2}\leq 1\}$  with $N_2 = \ell_1$.}
    \label{fig:RCF_Example_2D}
\end{figure}
\end{example}

\subsection{Classification Regions}
In the previous section, we used \cref{thm:RCF_diff} to conclude that if $\| \cdot\|_{N_1}$ is differentiable, then the full hyperplane is extracted using one factual and one counterfactual query. Hence, we exactly know the full classification regions. However, for non-differentiable norms, this is not the case.

Now, suppose that, next to a set of arbitrary factuals, we also have an arbitrary set of robust counterfactuals (RCFs). This means that for points $\bm{x}^{(j)}$ from an index subset $j\in J_{0}\cup J_1\subseteq I_{0}\cup I_1$ we get a RCF $\bm{x}_{RCF}^{(j)}$.   Then, we know that every point $\bm{z}$ in the robust region $\mathcal S$ around the robust counterfactual will be classified differently from the corresponding factual. Concretely,
\begin{align}
  \bm{a}^{\top}\bm{z} -b&\geq 0  \quad\forall \bm{z} : \| \bm{z} - \bm{x}_{RCF}^{(j)}\|_{N_2} \leq \rho&&\quad\forall j \in J_{0},\label{eq:RCF_J0}\\
   \bm{a}^{\top}\bm{z} -b&\leq 0  \quad\forall \bm{z} : \| \bm{z} - \bm{x}_{RCF}^{(j)}\|_{N_2} \leq \rho,\;\;&&\quad\forall j \in J_1.\label{eq:RCF_J1}
\end{align} 
Besides, Lemma \ref{lem:optimality_conditions_RCF} tells us $\bm{a}$ is a subgradient of norm $f(\bm{x}) = \|\bm{x}\|_{N_1}$ at $(\bm{x}_{RCF}^{(j)} - \bm{x}^{(j)})$. That is,
\begin{align}
    \bm{a}\in\partial f&(\bm{x}_{RCF}^{(j)} - \bm{x}^{(j)})\;\;&&\quad\forall j \in J.\label{eq:RCF_sg}
\end{align}

With this extra information, we can characterize the uncertainty set of $\bm{a},b$  as $\mathcal{U}_{\bm{a},b}^{RCF}$ as given by the following set of constraints:
\begin{align*}
  \bm{a}^{\top}\bm{x}^{(i)} -b&\leq 0  &&\quad \forall i \in I_{0}, \notag\\
  \bm{a}^{\top}\bm{x}^{(i)} -b&\geq 0  &&\quad \forall i \in I_1, \notag \\ 
  \bm{a}^{\top}\bm{z} -b&\geq 0  \quad\forall \bm{z} : \| \bm{z} - \bm{x}_{RCF}^{(j)}\|_{N_2} \leq \rho&&\quad\forall j \in J_{0},\\
   \bm{a}^{\top}\bm{z} -b&\leq 0  \quad\forall \bm{z} : \| \bm{z} - \bm{x}_{RCF}^{(j)}\|_{N_2} \leq \rho,\;\;&&\quad\forall j \in J_1,\\
   \bm{a}\in\partial f&(\bm{x}_{RCF}^{(j)} - \bm{x}^{(j)}) \;\;&&\quad\forall j \in J.
\end{align*} 

We remark that for norm-1 equal to $\ell_1$ or $\ell_{\infty}$, constraint (\ref{eq:RCF_sg}) can be linearized as the subgradient of these norms can be modeled using linear constraints. The rest of the uncertainty set is of a similar structure to $\mathcal{U}_{\bm{a},b}^{CF}$ as presented for the counterfactual queries. Therefore, using the same techniques as presented in \cref{sec:regions_CF}, a computationally tractable characterization of the \textit{`Yes'} and \textit{`No'} regions can be derived.

However, we know more information about $\bm{a},b$. We know that the robust region $\mathcal{S}$ will touch the hyperplane since otherwise a closer robust counterfactual would exist. We generally cannot characterize where the robust region around the robust counterfactual touches the hyperplane without knowing the model parameters. This is a key difference with general counterfactuals, where we know that the counterfactual point lies on the hyperplane. Using this information, we can rewrite constraints (\ref{eq:RCF_J0}) and ({\ref{eq:RCF_J1}). To this end, we first rewrite constraint (\ref{eq:RCF_J0}) for the worst case scenario to get an equivalent constraint
\[\bm{a}^{\top}\bm{x}_{RCF}^{(j)} -b-\rho\|\bm{a}\|_{N_2}^*\geq 0 \quad\forall j \in J_{0}.\]
Moreover, since we also know $\mathcal{S}$ touches the hyperplane, equality should hold for this constraint, resulting in the first optimality condition of Lemma \ref{lem:optimality_conditions_RCF}. Similarly, constraint (\ref{eq:RCF_J1}) can be replaced by the following stronger constraint:
\[\bm{a}^{\top}\bm{x}_{RCF}^{(j)} -b+\rho\|\bm{a}\|_{N_2}^*= 0 \quad\forall j \in J_1.\]
Therefore, a stricter uncertainty set of $\bm{a},b$ denoted by $\overline{\mathcal{U}}_{\bm{a},b}^{RCF}$ can be described using the following constraints
\begin{align*}
  \bm{a}^{\top}\bm{x}^{(i)} -b&\leq 0  &&\quad \forall i \in I_{0}, \notag\\
  \bm{a}^{\top}\bm{x}^{(i)} -b&\geq 0  &&\quad \forall i \in I_1, \notag \\ 
  \bm{a}^{\top}\bm{x}_{RCF}^{(j)} -b-\rho\|\bm{a}\|_{N_2}^*&= 0 &&\quad\forall j \in J_0\\
  \bm{a}^{\top}\bm{x}_{RCF}^{(j)} -b+\rho\|\bm{a}\|_{N_2}^*&= 0 &&\quad\forall j \in J_1,\\
   \bm{a}\in\partial  f(\bm{x}_{RCF}^{(j)} -& \bm{x}^{(j)}) \;\;&&\quad\forall j \in J.
\end{align*} 

Unfortunately, these equality constraints in the uncertainty set $\overline{\mathcal{U}}_{\bm{a},b}^{RCF}$ make the inner optimization problems for the \textit{`Yes'} and \textit{`No'} regions,
\begin{align*}
    \mathcal{X}_{\textit{`Yes'}}&=\{ \bm{x}  \;\; | \;\;  \min_{\bm{a},b\in \overline{\mathcal{U}}_{\bm{a},b}^{RCF}}\bm{a}^{\top}\bm{x} - b \geq 0  \}&\text{and}&&
  \mathcal{X}_{\textit{`No'}}&=\{ \bm{x}  \;\; | \;\;  \max_{\bm{a},b\in \overline{\mathcal{U}}_{\bm{a},b}^{RCF}}\bm{a}^{\top}\bm{x} - b \leq 0  \}
\end{align*}
 intractable due to the non-linearity of $\|\cdot\|_{N_2}^*$. In fact, the reformulation in the previous sections relied on a duality argument that cannot be applied in the latter situation.

Using the abovementioned uncertainty set, $\overline{\mathcal{U}}_{\bm{a},b}^{RCF}$, we visualize the classification regions when there is one data point classified as \textit{`No'} and a corresponding robust counterfactual, \textit{i.e.}, $|I|=|J|=1$. We examined nine cases, combining norm-1 and norm-2 pairs from $\{\ell_1,\ell_2,\ell_{\infty}\}$. The results are depicted in Figure \ref{fig:RCF_norms}, where we see that for norm-1 non-differentiable, \textit{i.e.}, $\ell_1,\ell_{\infty}$, there are areas where we cannot conclude the classification.

Two notable insights result in conditions on $\bm{a},b$, which can be added to $\mathcal{U}_{\bm{a},b}^{RCF}$ such that the dualization for the inner optimization problems for the \textit{`Yes'} and \textit{`No'} region remains tractable.

First, we see in Figure \ref{fig:RCF_norms} that for norm-1 non-differentiable, the \textit{`No'} region is a translated pointed cone. The vertex point of this cone is not always equal to the original factual queried data point. In the following, we will characterize this point $\bar{\bm{x}}$. This results in an extra data point for which we know the classification, which reduces the uncertainty in $\bm{a},b$.

\begin{lemma}\label{lem:perspective_point}
Let $\bm{x}_F$ be a data point and let $\bm{x}_{RCF}^*$ be an optimal robust counterfactual under $\| \cdot\|_{N_1}$-distance and with robustness set $\mathcal{S}=\{\bm{s}\mid \|\bm{s}\|_{N_2}\leq \rho\}$. Furthermore, assume that $\|\bm{a}\|_{N_2}^*\le C \| \bm{a} \|_{N_1}^*$ for all $\bm{a}\in \mathbb R^p$. Then, for $\bm{v}=(\bm{x}_{RCF}^*-\bm{x}_F)/\|\bm{x}_{RCF}^*-\bm{x}_F\|_{N_1}$ the point $\bar{\bm{x}}:= \bm{x}_{RCF} -  d\bm{v}$ has the same classification as $\bm{x}_F$ if $d\ge \rho C$.
\end{lemma}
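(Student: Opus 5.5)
We evaluate the affine function $g(\bm{x}):=\bm{a}^\top\bm{x}-b$ at $\bar{\bm{x}}$ and show that its sign agrees with the classification of $\bm{x}_F$. By symmetry we treat the case $q_F(\bm{x}_F)=-1$ (a \textit{`No'} point), so that $g(\bm{x}_F)\le 0$ on the closure. The case $q_F(\bm{x}_F)=1$ is identical with all signs flipped. We may assume $\bm{x}_{RCF}^*\neq\bm{x}_F$, since otherwise $\bm{v}$ is undefined.

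First, Lemma \ref{lem:optimality_conditions_RCF} gives the touching condition. With $q_F(\bm{x}_F)=-1$ it reads
\[
\bm{a}^\top\bm{x}_{RCF}^* - b = \rho\|\bm{a}\|_{N_2}^*,
\]
so $g(\bm{x}_{RCF}^*)=\rho\|\bm{a}\|_{N_2}^*$.

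Second, we determine the sign and size of $\bm{a}^\top\bm{v}$. By Lemma \ref{lem:RCF_direction}, the displacement $\bm{x}_{RCF}^*-\bm{x}_F$ equals $d_{\bm{x}_F}\bm{w}$ for some $\bm{w}$ with $\|\bm{w}\|_{N_1}\le 1$ and $\bm{a}^\top\bm{w}=\|\bm{a}\|_{N_1}^*$. Here $d_{\bm{x}_F}>0$, because its numerator is $b-\bm{a}^\top\bm{x}_F+\rho\|\bm{a}\|_{N_2}^*>0$. Since $\bm{a}^\top\bm{w}=\|\bm{a}\|_{N_1}^*$ forces $\|\bm{w}\|_{N_1}\ge 1$, we in fact have $\|\bm{w}\|_{N_1}=1$. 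The normalized vector $\bm{v}$ therefore equals $\bm{w}$, and $\bm{a}^\top\bm{v}=\|\bm{a}\|_{N_1}^*$.

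If one prefers not to rely on the ``only if'' direction of Lemma \ref{lem:RCF_direction}, the same conclusion follows from Corollary \ref{cor:optimality_conditions_RCF_non_differentiable_norm} and the dual-norm inequality:
\[
\|\bm{x}_{RCF}^*-\bm{x}_F\|_{N_1}=\lambda\bm{a}^\top(\bm{x}_{RCF}^*-\bm{x}_F)\le\|\lambda\bm{a}\|_{N_1}^*\,\|\bm{x}_{RCF}^*-\bm{x}_F\|_{N_1}\le\|\bm{x}_{RCF}^*-\bm{x}_F\|_{N_1}.
\]
Equality must hold throughout, so $|\lambda|\|\bm{a}\|_{N_1}^*=1$ and $\lambda\bm{a}^\top\bm{v}=1$. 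The sign of $\lambda$ is positive because moving from $\bm{x}_F$ to $\bm{x}_{RCF}^*$ increases $g$. Hence again $\bm{a}^\top\bm{v}=\|\bm{a}\|_{N_1}^*$.

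Third, we combine these with the norm comparison assumption $\|\bm{a}\|_{N_2}^*\le C\|\bm{a}\|_{N_1}^*$:
\[
g(\bar{\bm{x}})=g(\bm{x}_{RCF}^*)-d\,\bm{a}^\top\bm{v}=\rho\|\bm{a}\|_{N_2}^*-d\|\bm{a}\|_{N_1}^*\le(\rho C-d)\|\bm{a}\|_{N_1}^*\le 0
\]
whenever $d\ge\rho C$. Thus $\bar{\bm{x}}$ lies in the (closed) \textit{`No'} region, matching $\bm{x}_F$. With the paper's convention of working with closures of the classification regions, this is the claim. In the \textit{`Yes'} case the touching condition gives $g(\bm{x}_{RCF}^*)=-\rho\|\bm{a}\|_{N_2}^*$ and $\bm{a}^\top\bm{v}=-\|\bm{a}\|_{N_1}^*$, so $g(\bar{\bm{x}})\ge(d-\rho C)\|\bm{a}\|_{N_1}^*\ge 0$.

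The main obstacle is the second step: pinning down that $\bm{a}^\top\bm{v}$ equals exactly $\pm\|\bm{a}\|_{N_1}^*$ with the correct sign, for the particular optimal robust counterfactual returned. The plan handles this through the equality case of the dual-norm inequality, together with the observation that the robust counterfactual lies strictly on the opposite side of the hyperplane from $\bm{x}_F$.
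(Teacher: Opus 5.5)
Your proof is correct and follows essentially the same route as the paper. The paper also uses the touching condition $\bm{a}^\top\bm{x}_{RCF}^*-b=\rho\|\bm{a}\|_{N_2}^*$, then gets $\bm{a}^\top\bm{v}=\|\bm{a}\|_{N_1}^*$ from the sandwich $\lambda^{-1}=\bm{a}^\top\bm{v}\le\|\bm{a}\|_{N_1}^*\le\lambda^{-1}$ with $\lambda>0$ (exactly your second variant), and finishes with the norm comparison. Your explicit \textit{`Yes'} case and your closure caveat ($g(\bar{\bm{x}})\le 0$ rather than $<0$) just spell out what the paper's WLOG argument and conclusion $\bm{a}^\top\bar{\bm{x}}\le b$ leave implicit.
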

We can conclude from the latter lemma the following special cases.
\begin{corollary}\label{cor:N1N2}
Under the same assumptions as in Lemma \ref{lem:perspective_point} the following results hold:
    \begin{enumerate}[(i)]
        \item For $N_1=\ell_1$ and $N_2=\ell_q$ with $q\ge 1$, the point $\bar{\bm{x}}$ has the same classification as $\bm{x}_F$ when $d\geq \rho p^{1-1/q}$.
        \item For $N_1=\ell_{\infty}$ and $N_2=\ell_q$ with $q\ge 1$, the point $\bar{\bm{x}}$ has the same classification as $\bm{x}_F$ when $d\geq \rho$.
        \item If $N_1=N_2$, then $\bm{x}_\mathcal{S}=\bm{x}_{RCF}^*-\rho \bm{v}$ lies on the hyperplane.
    \end{enumerate}
\end{corollary}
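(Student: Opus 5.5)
Each part of Corollary \ref{cor:N1N2} follows from Lemma \ref{lem:perspective_point} once we find the smallest valid constant $C$ in $\|\bm{a}\|_{N_2}^*\le C\|\bm{a}\|_{N_1}^*$ for the given pair of norms. Part (iii) needs a separate argument using Lemma \ref{lem:optimality_conditions_RCF}.

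\emph{Parts (i) and (ii).} We use the standard dualities $\|\cdot\|_{\ell_1}^*=\|\cdot\|_\infty$, $\|\cdot\|_{\ell_\infty}^*=\|\cdot\|_1$, and $\|\cdot\|_{\ell_q}^*=\|\cdot\|_{q'}$ with $1/q+1/q'=1$.
For (i) we need $\|\bm{a}\|_{q'}\le C\|\bm{a}\|_\infty$. Since $\|\bm{a}\|_{q'}\le p^{1/q'}\|\bm{a}\|_\infty$, we may take $C=p^{1/q'}=p^{1-1/q}$. This also covers the endpoints $q=1$ (so $q'=\infty$, $C=1$) and $q=\infty$ (so $q'=1$, $C=p$).
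For (ii) we need $\|\bm{a}\|_{q'}\le C\|\bm{a}\|_1$. This holds with $C=1$ because $q'\ge 1$ and $\ell_r$-norms are nonincreasing in $r$.
In both cases, substituting $C$ into the condition $d\ge\rho C$ of Lemma \ref{lem:perspective_point} gives the claimed thresholds.

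\emph{Part (iii).} Here $N_1=N_2$, so write $\|\cdot\|$ for the common norm and take $\bm{v}=(\bm{x}_{RCF}^*-\bm{x}_F)/\|\bm{x}_{RCF}^*-\bm{x}_F\|$ as in Lemma \ref{lem:perspective_point}. Assume without loss of generality that $q_F(\bm{x}_F)=-1$; the other case follows by symmetry.
\begin{enumerate}[(1)]
\item By Lemma \ref{lem:optimality_conditions_RCF}, $\bm{a}^\top\bm{x}_{RCF}^*=b+\rho\|\bm{a}\|^*$.
\item We need $\bm{a}^\top\bm{v}=\|\bm{a}\|^*$. Lemma \ref{lem:RCF_direction} gives $\bm{x}_{RCF}^*-\bm{x}_F=d_{\bm{x}_F}\bm{w}$ with $\|\bm{w}\|\le 1$ and $\bm{a}^\top\bm{w}=\|\bm{a}\|^*$, where $d_{\bm{x}_F}>0$ because $\bm{x}_F$ is on the `No' side. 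Then $\bm{a}^\top\bm{v}=d_{\bm{x}_F}\|\bm{a}\|^*/\|\bm{x}_{RCF}^*-\bm{x}_F\|\ge\|\bm{a}\|^*$, using $\|\bm{x}_{RCF}^*-\bm{x}_F\|\le d_{\bm{x}_F}$. Hölder gives $\bm{a}^\top\bm{v}\le\|\bm{a}\|^*\|\bm{v}\|=\|\bm{a}\|^*$, so equality holds. Equivalently, the subgradient relation $\lambda^*\bm{a}\in\partial f(\bm{x}_{RCF}^*-\bm{x}_F)$ together with the dual-norm characterization of $\partial f$ yields the same equality.
\item Combining (1) and (2), $\bm{a}^\top(\bm{x}_{RCF}^*-\rho\bm{v})-b=\rho\|\bm{a}\|^*-\rho\|\bm{a}\|^*=0$. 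Hence $\bm{x}_\mathcal{S}$ lies on the hyperplane.
\end{enumerate}

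\emph{Main obstacle.} The delicate point is the sign in step (2). We must check that $\bm{v}$, normalized in $N_1$, aligns with $\bm{a}$ and not with $-\bm{a}$ relative to the factual's class, so that subtracting $\rho\bm{v}$ moves toward the hyperplane. Once $d_{\bm{x}_F}>0$ is established for a `No' factual, this follows directly. Everything else is routine norm bookkeeping.
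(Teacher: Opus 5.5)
Your proof is correct and follows the paper's route. Parts (i) and (ii) are Lemma \ref{lem:perspective_point} with the dual-norm equivalence constants $p^{1-1/q}$ and $1$. Part (iii) is the equality case of the chain in that lemma's proof, with $C=1$, $d=\rho$ and $\bm{a}^\top\bm{v}=\|\bm{a}\|^*$. You obtain that last equality directly from Lemma \ref{lem:RCF_direction} and H\"older, rather than from the $\lambda$-sandwich of Corollary \ref{cor:optimality_conditions_RCF_non_differentiable_norm}; this is a minor variation on the same argument.
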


The latter corollary shows that in the case when $N_1=N_2$, see, \textit{e.g.}, Figure \ref{fig:ex_rgns_RCF_L1L1} and Figure \ref{fig:ex_rgns_RCF_LinfLinf}, it is possible to characterize an exact point $\bm{x}_\mathcal{S}$, where the robust region $\mathcal{S}$ around the robust counterfactual touches the hyperplane. Using this point, constraints (\ref{eq:RCF_J0}) and (\ref{eq:RCF_J1}) can be reformulated to inequalities stating the robustness set to be on the desired side of the hyperplane, while adding constraint $\bm{a}^{\top}\bm{x}_\mathcal{S}-b=0$.

\begin{figure}[H]
     \centering
     \begin{subfigure}[b]{0.32\textwidth}
         \centering
         \includegraphics[width=\textwidth]{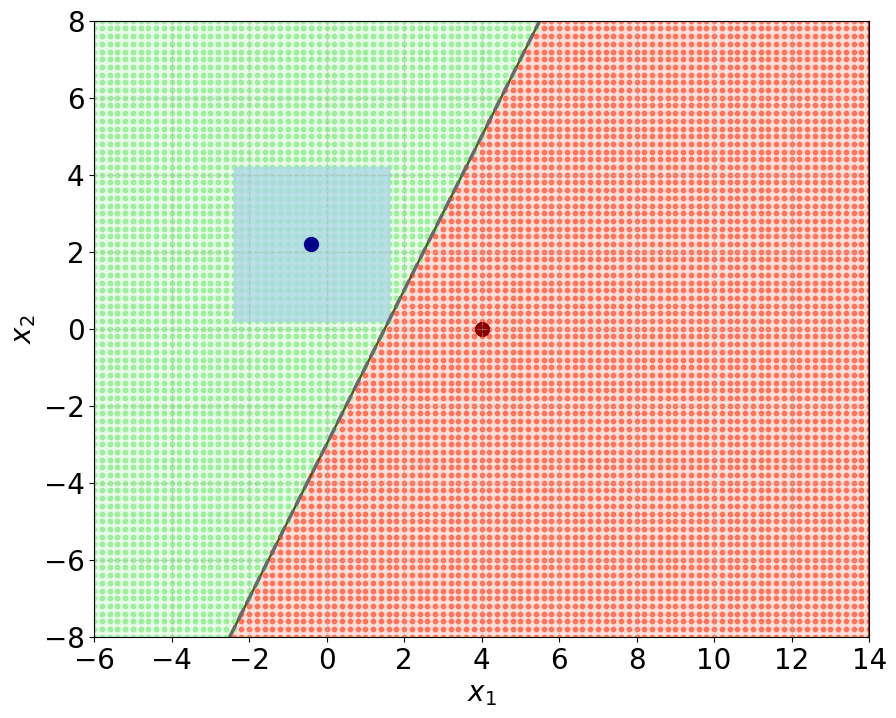}
         \caption{$N_1=\ell_2, N_2=\ell_{\infty}$.}
         \label{fig:ex_rgns_RCF_L2Linf}
     \end{subfigure}
     \hfill
     \begin{subfigure}[b]{0.32\textwidth}
         \centering
         \includegraphics[width=\textwidth]{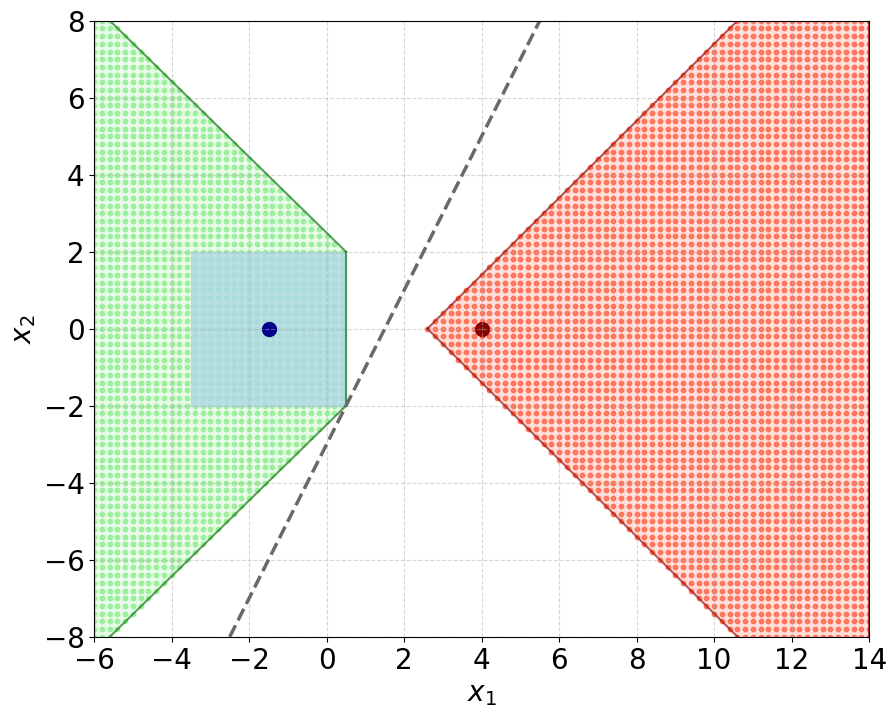}
         \caption{$N_1=\ell_1, N_2=\ell_{\infty}$.}
         \label{fig:ex_rgns_RCF_L1Linf}
     \end{subfigure}
     \hfill
     \begin{subfigure}[b]{0.32\textwidth}
         \centering
         \includegraphics[width=\textwidth]{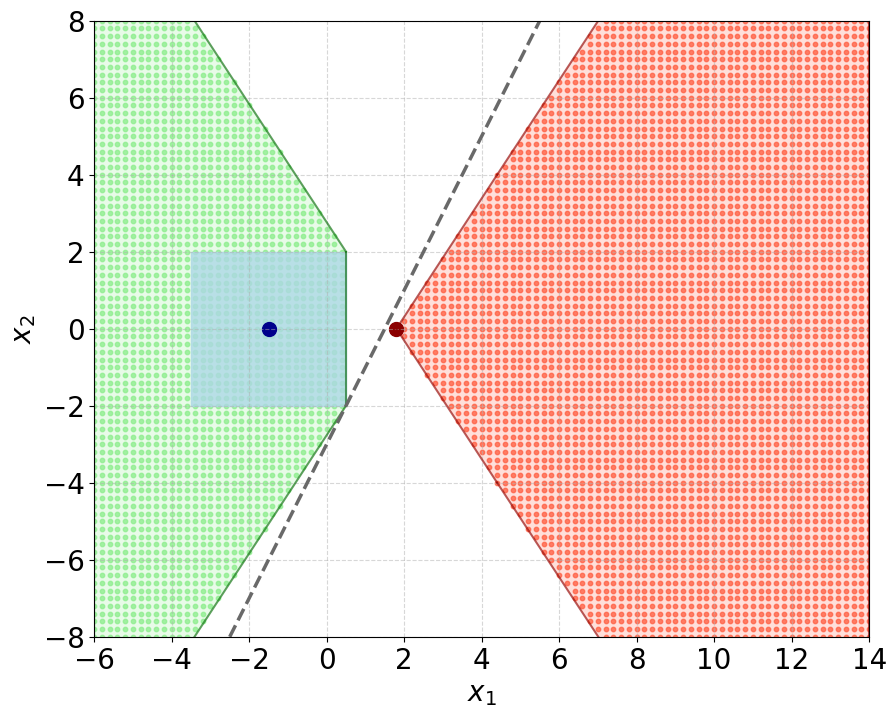}
         \caption{$N_1=\ell_1, N_2=\ell_{\infty}$.}
         \label{fig:ex_rgns_RCF_L1Linf_othr}
     \end{subfigure}
     \hfill
     \begin{subfigure}[b]{0.32\textwidth}
         \centering
         \includegraphics[width=\textwidth]{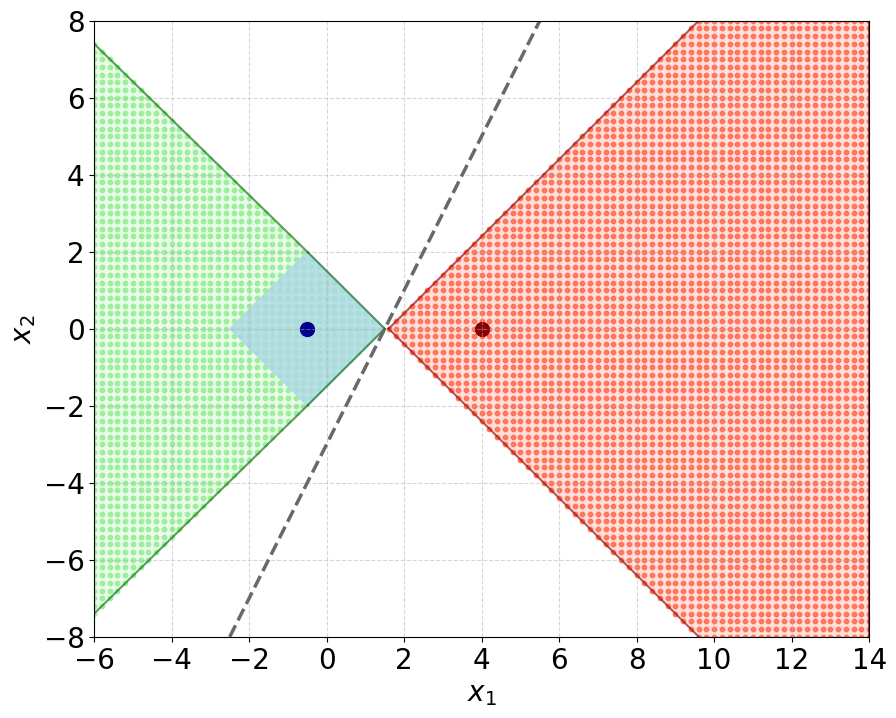}
         \caption{$N_1=\ell_1, N_2=\ell_{1}$.}
         \label{fig:ex_rgns_RCF_L1L1}
     \end{subfigure}
     \hfill
     \begin{subfigure}[b]{0.32\textwidth}
         \centering
         \includegraphics[width=\textwidth]{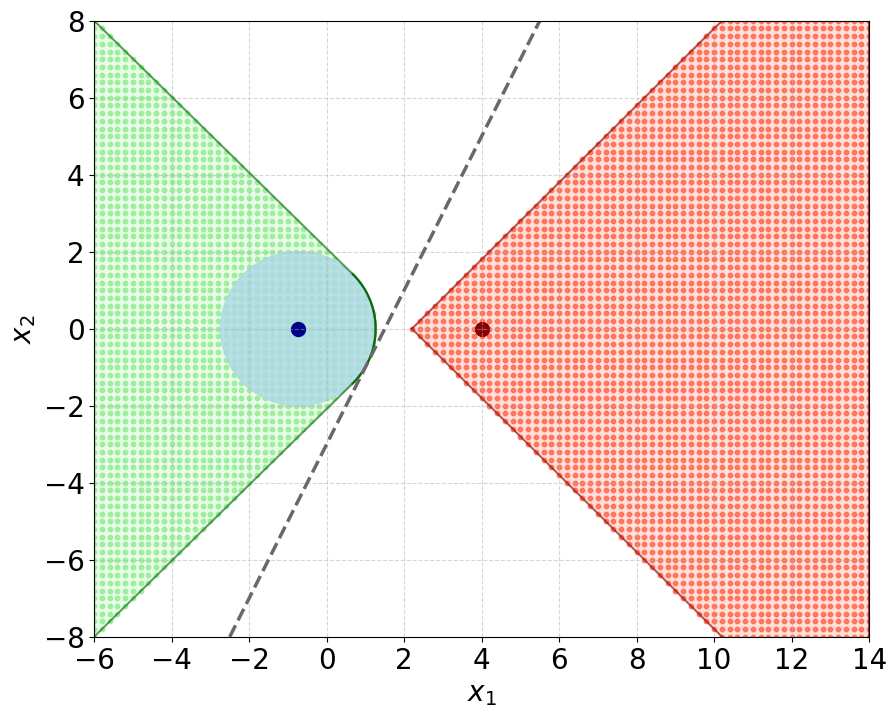}
         \caption{$N_1=\ell_1, N_2=\ell_{2}$.}
         \label{fig:ex_rgns_RCF_L1L2}
     \end{subfigure}
     \hfill
     \begin{subfigure}[b]{0.32\textwidth}
         \centering
         \includegraphics[width=\textwidth]{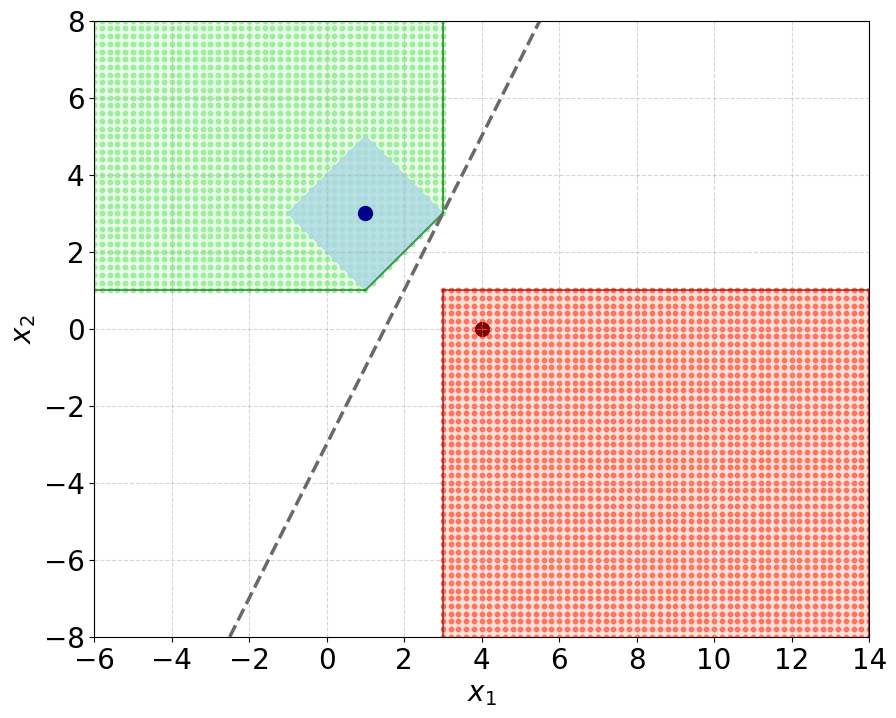}
         \caption{$N_1=\ell_{\infty}, N_2=\ell_{1}$.}
         \label{fig:ex_rgns_RCF_LinfL1}
     \end{subfigure}
     \hfill
     \begin{subfigure}[b]{0.32\textwidth}
         \centering
         \includegraphics[width=\textwidth]{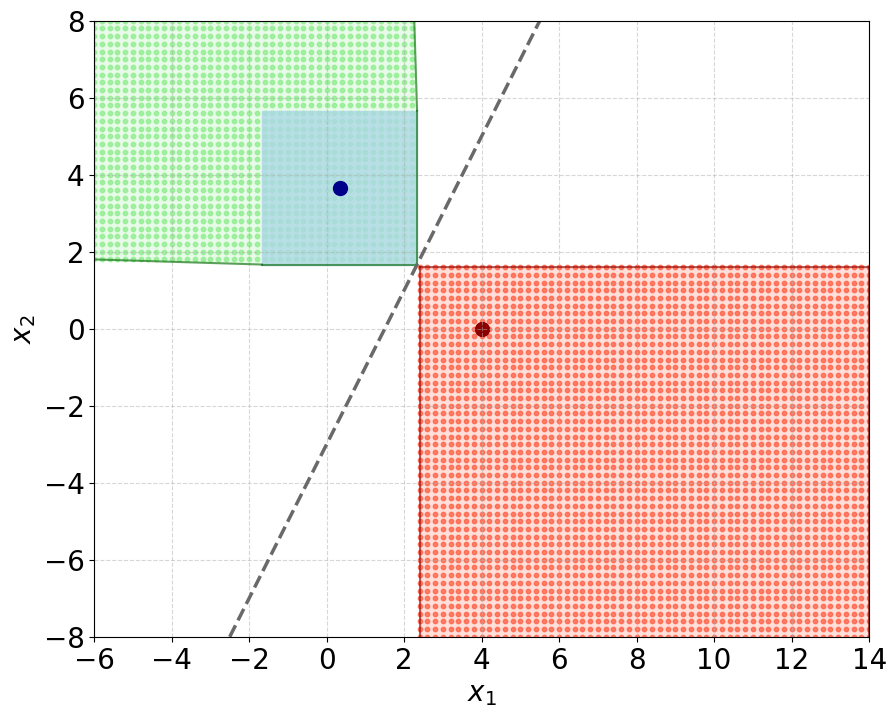}
         \caption{$N_1=\ell_{\infty}, N_2=\ell_{\infty}$.}
         \label{fig:ex_rgns_RCF_LinfLinf}
     \end{subfigure}
     \hfill
     \begin{subfigure}[b]{0.32\textwidth}
         \centering
         \includegraphics[width=\textwidth]{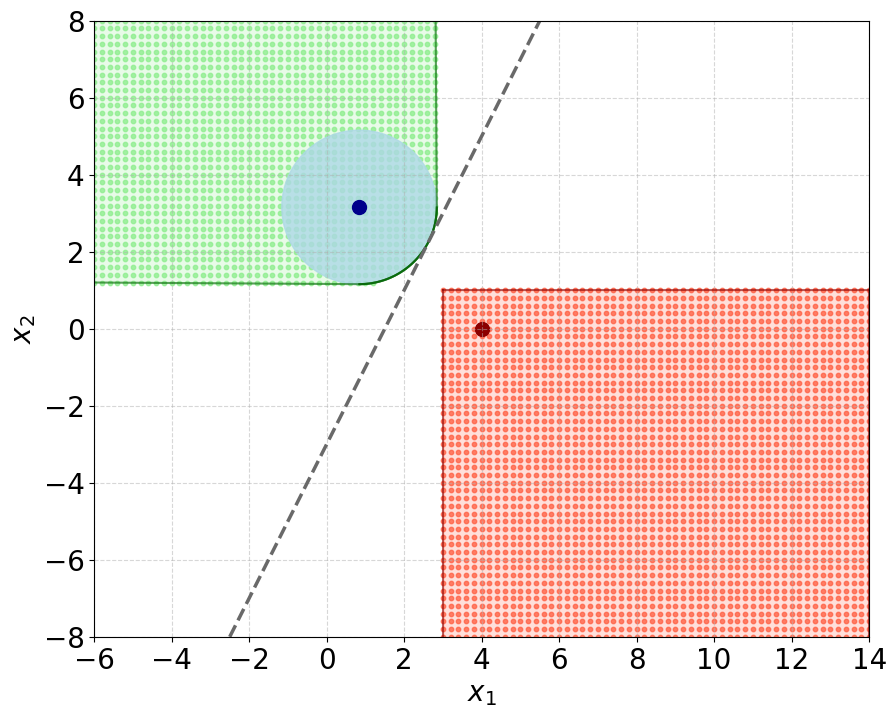}
         \caption{$N_1=\ell_{\infty}, N_2=\ell_{2}$.}
         \label{fig:ex_rgns_RCF_LinfL2}
     \end{subfigure}
     \hfill
     \begin{subfigure}[b]{0.32\textwidth}
         \centering
         \includegraphics[width=\textwidth]{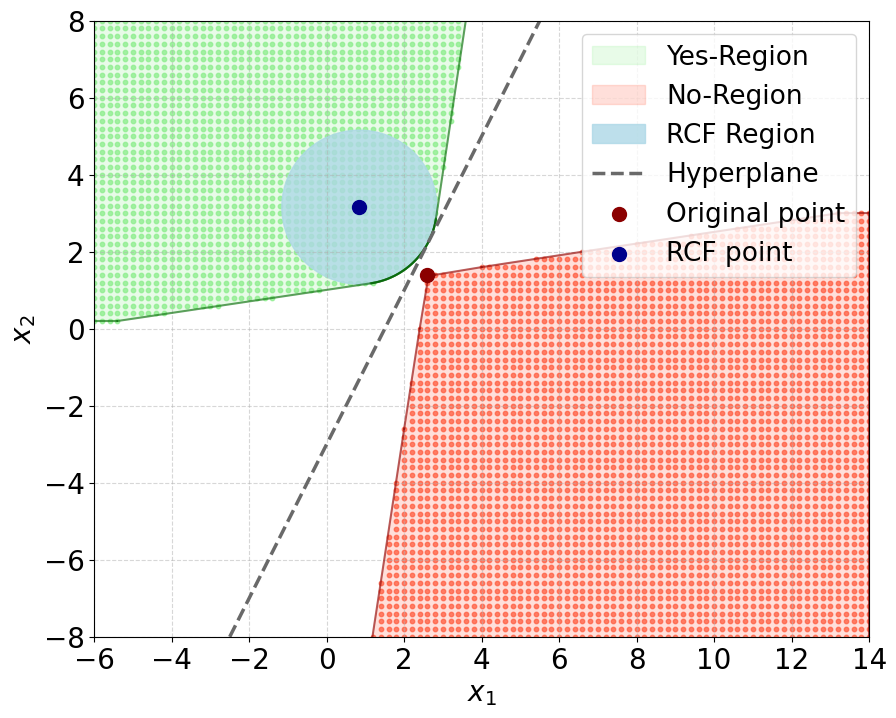}
         \caption{$N_1=\ell_{\infty}, N_2=\ell_{2}$.}
         \label{fig:ex_rgns_RCF_LinfL2_othr}
     \end{subfigure}
     \caption{Examples of the classification regions given one \textit{`No'} factual and corresponding robust counterfactual for different norms.}
        \label{fig:RCF_norms}
\end{figure}

\section{Conclusion}
In this work, we presented how factual, counterfactual, and robust counterfactual queries can be used to extract information of linear classifiers in $p$-dimensional feature space. We provided computationally tractable characterizations of the \textit{`Yes'} and \textit{`No'} regions given an arbitrary set of factual, counterfactual, or robust counterfactual queried points. This allows us to efficiently check which points will be classified as \textit{`Yes'} and which as \textit{`No'} without querying the model again. For factual queries, we have demonstrated that the classification regions extend beyond the convex hull of the factual data points. When we additionally consider counterfactuals, we gain more information about the classification regions. In particular, we have proven that only one counterfactual can extract the full hyperplane when the \textit{minimal edit} distance function is differentiable. When this distance function is non-differentiable, $p+1$ counterfactual queries suffice to retrieve the full hyperplane. We have seen that these results can be generalized for robust counterfactuals. We show that deriving tractable formulations for the \textit{`Yes'} and \textit{`No'} regions is more difficult than for classical counterfactuals but can be done for certain special cases. We have also shown that one pair of factual and robust counterfactual queries suffices to extract the original hyperplane when the \textit{minimal edit} distance function is differentiable. When this distance function is non-differentiable, $p+1$ pairs of factual and robust counterfactual queries are needed. Notably, the norm used to define the robust region does not affect the number of queries needed to extract the model, but can affect the information gained on the classification regions. In summary, our results show that to increase privacy of a linear model, using non-differentiable norms for the distance of the counterfactuals is beneficial. Furthermore, providing robust counterfactuals adds an additional layer of privacy, since for recovery of the model parameters additional factual queries are needed.

Our results naturally come with limitations. Since we assume unconstrained factual and counterfactual queries in $\mathbb{R}^p$, the results presented in this paper are not evidently generalizable to non-continuous data, \textit{e.g.}, categorical or binary. In practice, some features may be immutable, leading to constrained counterfactuals, which we do not address. Future work includes extending our approach to linear models with kernels and to other non-linear models, such as classifiers using quadratic polynomials.
It could also be applied to extract classification regions and parameters of decision tree models, which rely on separating hyperplanes. Furthermore, our work relies on the assumption that the queried (robust) counterfactuals are exact, \textit{i.e.}, optimal solutions of the corresponding optimization problems. In practice, often heuristic methods are used, returning non-optimal (robust) counterfactuals. Extending our work for this situation could be of practical interest.  Finally, another direction is to develop defense mechanisms against the model-extraction techniques proposed in this work.

\bibliographystyle{plainnat}   
\bibliography{references}

\begin{thebibliography}{26}
\providecommand{\natexlab}[1]{#1}
\providecommand{\url}[1]{\texttt{#1}}
\expandafter\ifx\csname urlstyle\endcsname\relax
  \providecommand{\doi}[1]{doi: #1}\else
  \providecommand{\doi}{doi: \begingroup \urlstyle{rm}\Url}\fi

\bibitem[A{\"\i}vodji et~al.(2020)A{\"\i}vodji, Bolot, and
  Gambs]{aivodji2020model}
Ulrich A{\"\i}vodji, Alexandre Bolot, and S{\'e}bastien Gambs.
\newblock Model extraction from counterfactual explanations.
\newblock \emph{arXiv preprint arXiv:2009.01884}, 2020.

\bibitem[{Basel Committee on Banking Supervision}(2026)]{BCBS}
{Basel Committee on Banking Supervision}.
\newblock The basel committee -- overview, 2026.
\newblock URL \url{https://www.bis.org/bcbs/index.htm}.
\newblock Last access: 1 February 2026.

\bibitem[Berning et~al.(2024)Berning, Dunning, Spagnuelo, Veugen, and Van
  Der~Waa]{berning2024trade}
Sjoerd Berning, Vincent Dunning, Dayana Spagnuelo, Thijs Veugen, and Jasper Van
  Der~Waa.
\newblock The trade-off between privacy \& quality for counterfactual
  explanations.
\newblock In \emph{Proceedings of the 19th International Conference on
  Availability, Reliability and Security}, pages 1--9, 2024.

\bibitem[Bertsimas and den Hertog(2022)]{bertsimas_robust_2022}
Dimitris Bertsimas and Dick den Hertog.
\newblock \emph{Robust and {Adaptive} {Optimization}}.
\newblock Dynamic Ideas LLC, 2022.
\newblock ISBN 978-1-7337885-2-6.

\bibitem[Boenisch et~al.(2023)Boenisch, Dziedzic, Schuster, Shamsabadi,
  Shumailov, and Papernot]{boenisch2023curious}
Franziska Boenisch, Adam Dziedzic, Roei Schuster, Ali~Shahin Shamsabadi, Ilia
  Shumailov, and Nicolas Papernot.
\newblock When the curious abandon honesty: Federated learning is not private.
\newblock In \emph{2023 IEEE 8th European Symposium on Security and Privacy
  (EuroS\&P)}, pages 175--199. IEEE, 2023.

\bibitem[Chandrasekaran et~al.(2020)Chandrasekaran, Chaudhuri, Giacomelli, Jha,
  and Yan]{chandrasekaran2020exploring}
Varun Chandrasekaran, Kamalika Chaudhuri, Irene Giacomelli, Somesh Jha, and
  Songbai Yan.
\newblock Exploring connections between active learning and model extraction.
\newblock In \emph{29th USENIX Security Symposium (USENIX Security 20)}, pages
  1309--1326, 2020.

\bibitem[Dissanayake and Dutta(2024)]{dissanayake2024model}
Pasan Dissanayake and Sanghamitra Dutta.
\newblock Model reconstruction using counterfactual explanations: A perspective
  from polytope theory.
\newblock \emph{Advances in Neural Information Processing Systems},
  37:\penalty0 83397--83429, 2024.

\bibitem[{European Union}(2016)]{GDPR}
{European Union}.
\newblock Art. 22 {GDPR} -- {A}utomated individual decision-making, including
  profiling, 2016.
\newblock URL
  \url{https://gdpr.eu/article-22-automated-individual-decision-making/}.
\newblock Last access: 1 February 2026.

\bibitem[{Federal Reserve System}(2011)]{Fed117}
{Federal Reserve System}.
\newblock Sr 11-7: Guidance on {M}odel {R}isk {M}anagement.
\newblock Technical report, Board of Governors of the Federal Reserve System,
  Division of Banking Supervision and Regulation, August 2011.
\newblock URL
  \url{https://www.federalreserve.gov/supervisionreg/srletters/sr1107.htm}.
\newblock Last access: 1 February 2026.

\bibitem[Ferry et~al.(2024)Ferry, Fukasawa, Pascal, and
  Vidal]{ferry2024trained}
Julien Ferry, Ricardo Fukasawa, Timoth{\'e}e Pascal, and Thibaut Vidal.
\newblock Trained random forests completely reveal your dataset.
\newblock \emph{arXiv preprint arXiv:2402.19232}, 2024.

\bibitem[Fredrikson et~al.(2015)Fredrikson, Jha, and
  Ristenpart]{fredrikson2015model}
Matt Fredrikson, Somesh Jha, and Thomas Ristenpart.
\newblock Model inversion attacks that exploit confidence information and basic
  countermeasures.
\newblock In \emph{Proceedings of the 22nd ACM SIGSAC conference on computer
  and communications security}, pages 1322--1333, 2015.

\bibitem[Goethals et~al.(2023)Goethals, S{\"o}rensen, and
  Martens]{goethals2023privacy}
Sofie Goethals, Kenneth S{\"o}rensen, and David Martens.
\newblock The privacy issue of counterfactual explanations: explanation linkage
  attacks.
\newblock \emph{ACM Transactions on Intelligent Systems and Technology},
  14\penalty0 (5):\penalty0 1--24, 2023.

\bibitem[Khouna et~al.(2025)Khouna, Ferry, and
  Vidal]{khouna2025counterfactuals}
Awa Khouna, Julien Ferry, and Thibaut Vidal.
\newblock From counterfactuals to trees: Competitive analysis of model
  extraction attacks.
\newblock \emph{arXiv preprint arXiv:2502.05325}, 2025.

\bibitem[Lowd and Meek(2005)]{lowd2005adversarial}
Daniel Lowd and Christopher Meek.
\newblock Adversarial learning.
\newblock In \emph{Proceedings of the eleventh ACM SIGKDD international
  conference on Knowledge discovery in data mining}, pages 641--647, 2005.

\bibitem[Maragno et~al.(2024)Maragno, Kurtz, R{\"o}ber, Goedhart, Birbil, and
  den Hertog]{maragno2024finding}
Donato Maragno, Jannis Kurtz, Tabea~E R{\"o}ber, Rob Goedhart, {\c{S}}~Ilker
  Birbil, and Dick den Hertog.
\newblock Finding regions of counterfactual explanations via robust
  optimization.
\newblock \emph{INFORMS Journal on Computing}, 36\penalty0 (5):\penalty0
  1316--1334, 2024.

\bibitem[Milli et~al.(2019)Milli, Schmidt, Dragan, and Hardt]{milli2019model}
Smitha Milli, Ludwig Schmidt, Anca~D Dragan, and Moritz Hardt.
\newblock Model reconstruction from model explanations.
\newblock In \emph{Proceedings of the Conference on Fairness, Accountability,
  and Transparency}, pages 1--9, 2019.

\bibitem[Nguyen et~al.(2024)Nguyen, Huynh, Ren, Nguyen, Nguyen, Yin, and
  Nguyen]{nguyen2024survey}
Thanh~Tam Nguyen, Thanh~Trung Huynh, Zhao Ren, Thanh~Toan Nguyen, Phi~Le
  Nguyen, Hongzhi Yin, and Quoc Viet~Hung Nguyen.
\newblock A survey of privacy-preserving model explanations: Privacy risks,
  attacks, and countermeasures.
\newblock \emph{arXiv preprint arXiv:2404.00673}, 2024.

\bibitem[Oksuz et~al.(2024)Oksuz, Halimi, and Ayday]{Oksuz_2024}
Abdullah~Caglar Oksuz, Anisa Halimi, and Erman Ayday.
\newblock Autolycus: Exploiting explainable artificial intelligence (xai) for
  model extraction attacks against interpretable models.
\newblock \emph{Proceedings on Privacy Enhancing Technologies}, 2024\penalty0
  (4):\penalty0 684–699, October 2024.
\newblock ISSN 2299-0984.
\newblock \doi{10.56553/popets-2024-0137}.
\newblock URL \url{http://dx.doi.org/10.56553/popets-2024-0137}.

\bibitem[Pal et~al.(2020)Pal, Gupta, Shukla, Kanade, Shevade, and
  Ganapathy]{pal2020activethief}
Soham Pal, Yash Gupta, Aditya Shukla, Aditya Kanade, Shirish Shevade, and Vinod
  Ganapathy.
\newblock Activethief: Model extraction using active learning and unannotated
  public data.
\newblock In \emph{Proceedings of the AAAI conference on artificial
  intelligence}, volume~34, pages 865--872, 2020.

\bibitem[Reith et~al.(2019)Reith, Schneider, and
  Tkachenko]{reith2019efficiently}
Robert~Nikolai Reith, Thomas Schneider, and Oleksandr Tkachenko.
\newblock Efficiently stealing your machine learning models.
\newblock In \emph{Proceedings of the 18th ACM Workshop on Privacy in the
  Electronic Society}, pages 198--210, 2019.

\bibitem[Rigaki and Garcia(2023)]{rigaki2023survey}
Maria Rigaki and Sebastian Garcia.
\newblock A survey of privacy attacks in machine learning.
\newblock \emph{ACM Computing Surveys}, 56\penalty0 (4):\penalty0 1--34, 2023.

\bibitem[Rockafellar(1997)]{rockafellar1997convex}
R~Tyrrell Rockafellar.
\newblock \emph{Convex {Analysis}}, volume~28.
\newblock Princeton University Press, 1997.

\bibitem[Shokri et~al.(2021)Shokri, Strobel, and Zick]{shokri2021privacy}
Reza Shokri, Martin Strobel, and Yair Zick.
\newblock On the privacy risks of model explanations.
\newblock In \emph{Proceedings of the 2021 AAAI/ACM Conference on AI, Ethics,
  and Society}, pages 231--241, 2021.

\bibitem[Tram{\`e}r et~al.(2016)Tram{\`e}r, Zhang, Juels, Reiter, and
  Ristenpart]{tramer2016stealing}
Florian Tram{\`e}r, Fan Zhang, Ari Juels, Michael~K Reiter, and Thomas
  Ristenpart.
\newblock Stealing machine learning models via prediction $\{$APIs$\}$.
\newblock In \emph{25th USENIX security symposium (USENIX Security 16)}, pages
  601--618, 2016.

\bibitem[Wachter et~al.(2017)Wachter, Mittelstadt, and
  Russell]{wachter2017counterfactual}
Sandra Wachter, Brent Mittelstadt, and Chris Russell.
\newblock Counterfactual explanations without opening the black box: Automated
  decisions and the gdpr.
\newblock \emph{Harvard Journal of Law \& Technology}, 31:\penalty0 841, 2017.

\bibitem[Wang et~al.(2022)Wang, Qian, and Miao]{wang2022dualcf}
Yongjie Wang, Hangwei Qian, and Chunyan Miao.
\newblock Dualcf: Efficient model extraction attack from counterfactual
  explanations.
\newblock In \emph{Proceedings of the 2022 ACM Conference on Fairness,
  Accountability, and Transparency}, pages 1318--1329, 2022.

\end{thebibliography}

\newpage

\appendix

\section{Proofs}\label{app:proofs}
\subsection{Factuals}\label{app:proofs_factuals}
\begin{proof}[\cref{thm:rgn_factuals}]
    When we examine the inner optimization problem of \eqref{eq:no-rgn-opt}, we can use dualization with respect to $\bm{a},b$ to get an equivalent system of equations:
\[\begin{array}{rlcrllc}
     \max&\bm{a}^{\top}\bm{x} - b &\iff& \max &\;(\bm{x},-1)^{\top}\bm{a},b\\
    s.t. &\bm{a},b\in \mathcal{U}_{\bm{a},b}^F.&&s.t. &\;(\bm{x}^{(i)},-1)^{\top}\bm{a},b\leq 0 &\forall i\in I_{0},\\
    && &&\;(-\bm{x}^{(i)},1)^{\top}\bm{a},b\leq 0 &\forall i\in I_i.\\
    &&\iff& \min &\;\bm{0}\\
    &&&s.t. &\sum_{i\in I_{0}}u_i\bm{x}^{(i)} - \sum_{i\in I_1}u_i\bm{x}^{(i)}=\bm{x},\\
    &&&&-\sum_{i\in I_{0}}u_i + \sum_{i\in I_1}u_i=-1,\\
    &&&& \bm{u}\geq \bm{0}.
\end{array}
\]
Hence, the \textit{`No'} region can be written as the following polyhedral set:
\[ \mathcal{X}_{\textit{`No'}} = \left\{ \bm{x} \; | \;\; \exists \bm{u} \; : \;
\sum_{i\in I_{0}} u_i- \sum_{i\in I_1} u_i = 1, \; 
\sum_{i\in I_{0}} \bm{x}^{(i)}u_i- \sum_{i\in I_1} \bm{x}^{(i)}u_i = \bm{x}, \; 
 \bm{u}\geq \bm{0}\right\}. \]
Similarly, using dualization the \textit{`Yes'} region can be described as
\[ \mathcal{X}_{\textit{`Yes'}} =\left\{ \bm{x} \; | \;\; \exists \bm{u} \; : \;
\sum_{i\in I_1} u_i- \sum_{i\in I_{0}} u_i = 1, \; 
\sum_{i\in I_1} \bm{x}^{(i)}u_i- \sum_{i\in I_{0}} \bm{x}^{(i)}u_i = \bm{x}, \; 
 \bm{u}\geq \bm{0}\right\}. \]
\end{proof}

\subsection{Counterfactuals}\label{app:proofs_counterfactuals}
\begin{proof}[\cref{thm:region-CF}]
To characterize the \textit{`No'} region, we need to check when the inner optimization problem as described in equation \ref{eq:no-rgn-opt} is at most 0. Using the uncertainty set of $\bm{a},b$ this optimization problem becomes
$$\begin{array}{lrlcl}
\underset{\bm{a},b}{\max}& \;\bm{a},b^{\top}(\bm{x},-1)\\
 s.t.&\;  \bm{a},b^{\top}(\bm{x}^{(i)},-1)&\leq 0 & &\quad\forall i \in I_{0},  \\
&\;  \bm{a},b^{\top}(-\bm{x}^{(i)},1) &\leq 0  &\quad&\quad\forall i \in I_1, \\
&\;  \bm{a},b^{\top}\bm{z}&\leq 0  &\quad \forall \bm{z} \in\mathcal{U}_j &\quad \forall j \in J_{0}, \\ 
&\;  \bm{a},b^{\top}\bm{z} &\leq 0 &\quad
  \forall \bm{z} \in\mathcal{U}_j&\quad \forall j \in J_1, \\
&\;  \bm{a},b^{\top}(\bm{x}_{CF}^{(j)}, -1) &= 0  &\quad&\quad\forall j \in J,
\end{array}$$
where the uncertainty sets for $j\in J_{0}$ are given by 
\[
\mathcal{U}_j=\{\bm{z}:=(\bm{z}_a,z_b)\mid\| \bm{z}_a - \bm{x}^{(j)}\|_{N_1} \leq \rho_j,\; z_b=-1\}\]
and for $j\in J_1$ by
\[\mathcal{U}_j=\{\bm{z}:=(\bm{z}_a,z_b)\mid \|\bm{z}_a + \bm{x}^{(j)}\|_{N_1} \leq \rho_j,\; z_b=1\}.
\]
We clearly see that for each $j\in J$ the uncertainty set can be written as 
$\mathcal{U}_j=\{\bm{z}\mid f_{jk}(\bm{z})\leq 0, \forall k\in K_j\}$, where $f_{jk}$ is a convex function and $K_j$ the set of indices of constraints that defines uncertainty set $\mathcal{U}_j$. Besides, we see that the uncertainty sets are bounded.
Using the dualization results, primal worst is dual best, as presented in \citet[equation (2.45)]{bertsimas_robust_2022}, we can conclude this optimization problem is equivalent to:
$$\begin{array}{lllll}
\underset{\bm{t,u,v,z}}{\min}& \;0\\
 s.t.&\;  \sum\limits_{i \in I_{0}}t_{i}(\bm{x}^{(i)},-1)+\sum\limits_{i \in I_1}t_{i}(-\bm{x}^{(i)},1) +\sum\limits_{j\in J}v_j (\bm{x}_{CF}^{(j)},-1)+\sum\limits_{j\in J}u_j\bm{z}^{(j)}&=(\bm{x},-1),\\
 &\;\bm{z}^{(j)}\in\mathcal{U}_j\;\;\forall j\in J,\\
 &\; \bm{t},\bm{u}\in\mathbb{R}^{|I|}_{\geq 0},\; \bm{v}\in\mathbb{R}^{|J|},\; \bm{z}^{(j)}\in\mathbb{R}^{p+1}\;\;\forall j\in J. &&
\end{array}$$
Let $\bm{y}^{j}= (\bm{y}^{j}_a,y_b^j)=u_j\bm{z}^{(j)}$. Then, following \citet[Theorem 2.2]{bertsimas_robust_2022}, we write it as the following conic quadratic problem:
$$\begin{array}{lllll}
\underset{\bm{t,u,v,y}}{\min}& \;0\\
 s.t.&\;  \sum_{i \in I_{0}}t_{i}(\bm{x}^{(i)},-1)+\sum_{i \in I_1}t_{i}(-\bm{x}^{(i)},1), \\
 &\;+\sum_{j\in J}v_j (\bm{x}_{CF}^{(j)},-1)+\sum_{j\in J}(\bm{y}_a^{(j)},y_b^{(j)})=(\bm{x},-1), &\\
 &\;u_j(\| \bm{y}_a^{(j)}/u_j - \bm{x}^{(j)}\|_{N_1} - \rho_j)\leq  0&\;\;\forall j\in J_{0}, \\
 &\;u_j (y_b^{(j)}/u_j + 1)=0&\;\;\forall j\in J_{0},\\
 &\;u_j(\| \bm{y}_a^{(j)}/u_j + \bm{x}^{(j)}\|_{N_1} - \rho_j)\leq  0&\;\;\forall j\in J_1,\\
 &\;u_j (y_b^{(j)}/u_j - 1)=0&\;\;\forall j\in J_1,\\
 &\; \bm{t}\in\mathbb{R}^{|I|}_{\geq 0},\; \bm{u}\in\mathbb{R}^{|J|}_{\geq 0},\; \bm{v}\in\mathbb{R}^{|J|},\; \bm{y}^{(j)}\in\mathbb{R}^{p+1}\;\;\forall j\in J.&&
\end{array}$$
Since the objective of the latter optimization problem is constant 0, finding out whether $\bm{x}$ will be classified as a \textit{`No'} can be determined by checking if there exist $\bm{t},\bm{u},\bm{v},\bm{y}$ that meet the constraints.
The proof for the \textit{`Yes'} region follows a similar argumentation.
\end{proof}

\begin{proof}[Lemma \ref{lem:optimality_conditions_CF}]
Without loss of generality, we may assume that $\bm{x}_F$ is classified as \textit{`No'}. Given the classifier $h_{\bm{a},b}$, a potential counterfactual point must lie in the half-space $\bm{a}^\top \bm{x} \ge b$. Since we are minimizing the distance to $\bm{x}_F$, an optimal counterfactual actually lies on the boundary of the half-space $\bm{a}^\top \bm{x} = b$. Hence, the optimization problem \eqref{eq:CF_problem} can be reformulated as
\begin{equation*}\label{eq:CF_problem_reform}
    \begin{aligned}
    \min_{\bm{x}_{CF}} & \ \| \bm{x}_{CF}-\bm{x}_F\|_{N_1} \\
    s.t. \quad & \bm{a}^\top \bm{x}_{CF} = b .
\end{aligned}
\end{equation*}
The latter problem has a convex objective function and one linear equality constraint. Hence, we know that every optimal solution $\bm{x}^*$ must fulfill the KKT-conditions
\begin{align*}
    &\bm{a}^\top \bm{x}^* = b, \\
    &\bm{0} \in \partial_{\bm{x}}  \mathcal L(\bm{x}^*,\lambda^*),
\end{align*}
where $\lambda^*$ is a dual optimal solution and $\mathcal L$ is the Lagrangean dual function
\[
\mathcal L(\bm{x},\lambda) = \| \bm{x}- \bm{x}_F\|_{N_1} - \lambda (\bm{a}^\top \bm{x} - b).
\]
We have $\partial_{\bm{x}}  \mathcal L(\bm{x},\lambda) = \partial f(\bm{x}-\bm{x}_F) - \lambda \bm{a}$  which proves the result. Note, that since $\bm{x}_F \neq \bm{x}_{CF}^*$ we know that $\bm{0}\notin  \partial  f(\bm{x}_{CF}^* - \bm{x}_F)$, and hence, $\lambda^*\neq 0$ must hold.
\end{proof}

\begin{proof}[\cref{thm:CF_theorem_diff}]
Without loss of generality, we assume that $\bm{x}_F$ is classified as \textit{`No'}. We apply Lemma \ref{lem:optimality_conditions_CF} to the case where the norm is differentiable. Then, there exists an $\lambda^*\neq0$ such that
\begin{align*}
    &\lambda^* \bm{a} = \nabla  f(\bm{x}_{CF}^* - \bm{x}_F),\\
    &b=\bm{a}^\top \bm{x}_{CF}^*. 
\end{align*}
By defining $\hat{\bm{a}}=\nabla  f(\bm{x}_{CF}^* - \bm{x}_F)$ and $\hat{b}=\hat{\bm{a}}^\top \bm{x}_{CF}^*$, we have $\lambda^* \bm{a}=\hat{\bm{a}}$ and $\lambda^* b=\lambda^*\bm{a}^\top \bm{x}_{CF}^*=\hat{\bm{a}}^\top \bm{x}_{CF}^*=\hat{b}$. Hence, we $\frac{1}{\lambda^*}\bm{a},b=\bm{a},b$ for a $\lambda^*\neq 0$ and the hyperplane given by $\bm{a},b$ is equivalent to the original hyperplane with parameters $\bm{a},b$ which proves the result.
\end{proof}

\begin{proof}[Lemma \ref{lem:CF_direction}]
    We note that $\bm{a}\neq\bm{0}$, hence $\|\bm{a}\|^*\neq\bm{0}$ and $d_{\bm{x}_F}$ is well defined. Besides, since we consider points $\bm{x}_F$ that do not lie on the hyperplane, we know $d_{\bm{x}_F}>0$. Moreover, we have $\|\bm{a}\|_{N_1}^*:=\sup_{\|\bm{x}\|_{N_1}\leq 1}\bm{a}^{\top}\bm{x}$. Since the unit ball defined by $\|\bm{x}\|_{N_1}\leq 1$ is a compact region and the map $\bm{x} \mapsto \bm{a}^{\top}\bm{x}$ is continuous, we know the supremum is attained at a certain point $\bm{v}$ such that $\|\bm{v}\|_{N_1}\leq 1$ and $\bm{a}^{\top}\bm{v}=\|\bm{a}\|_{N_1}^*$.

    First we consider an vector $\bm{v}$ with $\|\bm{v}\|_{N_1}\leq 1$ and $\bm{a}^{\top}\bm{v}=\|\bm{a}\|_{N_1}^*$ and show that $\bm{x}_{CF}^*= \bm{x}_{F}+d_{\bm{x}_F}\bm{v}$ is an optimal counterfactual.
    We use this $\bm{v}$ and $d_{\bm{x}_F}$ to show that the optimality conditions described in Corollary \ref{cor:optimality_conditions_CF_non_differentiable_norm} are met. For the first condition we have
    \begin{align*}
        \bm{a}^\top \bm{x}_{CF}^* &= \bm{a}^\top (\bm{x}_F+d_{\bm{x}_F}\bm{v})= \bm{a}^\top\bm{x}_F +\frac{b-\bm{a}^{\top}\bm{x}_F}{\|\bm{a}\|_{N_1}^*}\bm{a}^\top\bm{v}
        = \bm{a}^\top\bm{x}_F +\frac{b-\bm{a}^{\top}\bm{x}_F}{\|\bm{a}\|_{N_1}^*}\|\bm{a}\|_{N_1}^*\\
        &= \bm{a}^\top\bm{x}_F +b-\bm{a}^{\top}\bm{x}_F=b.
    \end{align*}
    The second and third conditions require a $\lambda\in\mathbb{R}$. Set $\lambda=\frac{\|\bm{v}\|_{N_1}}{\bm{a}^{\top}\bm{v}} \sgn(d_{\bm{x}_F})$. Then, the third condition is met:
    \[ \|\lambda\bm{a}\|_{N_1}^* = |\lambda|   \|\bm{a}\|_{N_1}^* = |\lambda|   \bm{a}^{\top}\bm{v}
    = \frac{\|\bm{v}\|_{N_1}}{|\bm{a}^{\top}\bm{v}|} \bm{a}^{\top}\bm{v}\leq \|\bm{v}\|_{N_1}\leq 1.\]
    Lastly, the second optimality condition is also satisfied:
    \begin{align*}
    \lambda \bm{a}^\top (\bm{x}_{CF}^* - \bm{x}_F) &= \lambda  d_{\bm{x}_F}  \bm{a}^\top \bm{v}
    =\frac{\|\bm{v}\|_{N_1}}{\bm{a}^{\top}\bm{v}}   \sgn(d_{\bm{x}_F})   d_{\bm{x}_F}   \bm{a}^\top \bm{v}\\
    &= |d_{\bm{x}_F}|  \|\bm{v}\|_{N_1} =\|d_{\bm{x}_F}  \bm{v}\|_{N_1} =\|\bm{x}_{CF}^* - \bm{x}_F\|_{N_1} .
    \end{align*}
    We conclude that $\bm{x}_{CF}^*= \bm{x}_{F}+d_{\bm{x}_F}\bm{v}$ is an optimal counterfactual.

    Second, we consider an optimal counterfactual $\bm{x}_{CF}^*= \bm{x}_{F}+d_{\bm{x}_F}\bm{v}$ and will show that $\bm{a}^{\top}\bm{v}=\|\bm{a}\|_{N_1}^*$ with $\|\bm{v}\|_{N_1}\leq 1$. Since $\bm{x}_{CF}^*$ is an optimal counterfactual, Corollary \ref{cor:optimality_conditions_CF_non_differentiable_norm} states that the two optimality conditions are met. The first condition tells us
    \begin{align*}
        \bm{a}^\top \bm{x}_{CF}^* &= \bm{a}^\top (\bm{x}_F+d_{\bm{x}_F}\bm{v})= \bm{a}^\top\bm{x}_F +\frac{b-\bm{a}^{\top}\bm{x}_F}{\|\bm{a}\|_{N_1}^*}\bm{a}^\top\bm{v}=b.
    \end{align*}
    Rearranging the terms yields
    \begin{align*}
        \frac{\bm{a}^\top\bm{v}}{\|\bm{a}\|_{N_1}^*}&=\frac{b-\bm{a}^{\top}\bm{x}_F}{b-\bm{a}^{\top}\bm{x}_F}=1.
    \end{align*}
    Hence, we have $\bm{a}^\top\bm{v}=\|\bm{a}\|_{N_1}^*$. The second optimality condition tells us that there exists a $\lambda\in\mathbb{R}$ such that the following relation holds:
    \begin{align*}
        \lambda \bm{a}^\top (\bm{x}_{CF}^* - \bm{x}_F) =\lambda d_{\bm{x}_F}\bm{a}^\top\bm{v}=\|\bm{x}_{CF}^* - \bm{x}_F\|_{N_1}=\|d_{\bm{x}_F}\bm{v}\|_{N_1}= |d_{\bm{x}_F}|\|\bm{v}\|_{N_1}.
    \end{align*}
    Using the fact that $\bm{a}^\top\bm{v}=\|\bm{a}\|_{N_1}^*$, we can rearrange the terms to find
    \begin{align*}
        \lambda&=\sgn(d_{\bm{x}_F})\frac{\|\bm{v}\|_{N_1}}{\|\bm{a}\|_{N_1}^*}.
    \end{align*}
    The last optimality condition implies a bound on $\|\lambda \bm a\|^*\leq 1$:
    \begin{align*}
        \|\lambda \bm a\|_{N_1}^*& = |\lambda| \|\bm{a}\|_{N_1}^* = |\lambda| \|\bm{a}\|_{N_1}^* = \frac{\|\bm{v}\|_{N_1}}{\|\bm{a}\|_{N_1}^*}\|\bm{a}\|_{N_1}^*=\|\bm{v}\|_{N_1}\leq 1.
    \end{align*}
    Therefore, it must hold that $\|\bm{v}\|_{N_1}\leq 1$ and $\bm{a}^{\top}\bm{v}=\|\bm{a}\|_{N_1}^*$
\end{proof}

\begin{proof}[Lemma \ref{lem:CF_basis}]
Note that Lemma \ref{lem:CF_direction} proves the existence of $\bm{v}$. Since the set $\{\bm{v}, \bm{v}^{2},\ldots, \bm{v}^{p}\}$ is linearly independent, we know that for $i=2,\ldots,p$ the counterfactuals are nonzero, \textit{i.e.}, $\bm{ \bm{v}^{j}}_{CF} =  \bm{v}^{j}+d_{ \bm{v}^{j}}\bm{v}\neq\bm{0}$.
    \begin{enumerate}[(i)]
        \item Assume $\bm{v}_{CF}\neq\bm{0}$. Consider $\lambda,\lambda_2,\ldots, \lambda_p$ such that
        \[\lambda\bm{v}_{CF}+\sum_{i=2}^p\lambda_i\bm{v}_{CF}^{i}=\bm{0}.\]
        By construction of the counterfactuals, we have
        \begin{align*}
        \lambda\bm{v}_{CF}+\sum_{i=2}^p\lambda_i\bm{v}_{CF}^{i} &=  \lambda\left(\bm{v}+d_{\bm{v}}\bm{v}\right)+\sum_{i=2}^p\lambda_i\left(\bm{v}^{i}+d_{\bm{v}^{i}}\bm{v}\right)\\
        &= \bm{v}\left(\lambda+\lambda d_{\bm{v}}+ \sum_{i=2}^p\lambda_i d_{\bm{v}^{i}}\right) + \sum_{i=2}^p \lambda_i\bm{v}^{i} = \bm{0}.
        \end{align*}
        This implies that $\lambda_2=\ldots=\lambda_p=0$, since otherwise $\bm{v}$ is a linear combination of vectors $\bm{v}^{2},\ldots, \bm{v}^{p}$ which contradicts with $V$ being a basis. The resulting equation is $\lambda(1+d_{\bm{v}})\bm{v}=\lambda\bm{v}_{CF}=\bm{0}$. 
        Since $\bm{v}_{CF}\neq\bm{0}$, it must hold that $\lambda=0$.
        Hence, the set $\{\bm{v}_{CF},\bm{v}_{CF}^{2},\ldots, \bm{v}_{CF}^{p}\}$ is linearly independent.
        \item Assume $\bm{v}_{CF}=\bm{0}$ and consider $\lambda_2,\ldots, \lambda_p$ such that
        \[\sum_{i=2}^p\lambda_i\bm{v}_{CF}^{i}=\bm{0}.\]
        By construction of the counterfactuals, we have
        \begin{align*}
        \sum_{i=2}^p\lambda_i\bm{v}_{CF}^{i} &= \sum_{i=2}^p\lambda_i\left(\bm{v}^{i}+d_{\bm{v}^{i}}\bm{v}\right) 
        = \sum_{i=2}^p \lambda_i\bm{v}^{i} + \bm{v} \sum_{i=2}^p\lambda_i d_{\bm{v}^{i}} = \bm{0}.
        \end{align*}
        This implies that $\lambda_2=\ldots=\lambda_p=0$, since otherwise $\bm{v}$ is a linear combination of vectors $\bm{v}^{2},\ldots, \bm{v}^{p}$ which contradicts with $V$ being a basis.
        Hence, the set $\{\bm{v}_{CF}^{2},\ldots, \bm{v}_{CF}^{p}\}$ is linearly independent.
    \end{enumerate}
\end{proof}
\begin{proof}[\cref{thm:CF_nondiff}]
It follows directly from Lemmas \ref{lem:CF_direction} and \ref{lem:CF_basis} that \cref{alg:CF_nondiff} finds $p$ linearly independent vectors on the hyperplane to retrieve the original hyperplane exactly. Since we assume $\lambda\bm{e}^{1}$ is not on the hyperplane, it takes one counterfactual query to find the direction of the counterfactuals, $\bm{v}$. Then only $p$ counterfactual queries are needed for the newly created basis $V$ to obtain a linearly independent system of equations to retrieve the original hyperplane exactly.
\end{proof}

\subsection{Robust Counterfactuals}
\begin{proof}[Lemma \ref{lem:optimality_conditions_RCF}]
Without loss of generality, we let $\bm{x}_F$ be classified as \textit{`No'} and reformulate \eqref{eq:robust_CF_problem} as
\begin{align*}
    \min_{\bm{x}_{RCF}} & \ \| \bm{x}_{RCF}-\bm{x}\|_{N_1} \\
    s.t. \quad & \bm{a}^\top (\bm{x}_{RCF} + \bm{s}) \ge b \quad \forall \bm{s}\in\mathcal S.
\end{align*}
Using the classical robust optimization reformulation, we can rewrite the infinite set of constraints as
\begin{align*}
    \bm{a}^\top (\bm{x}_{RCF} + \bm{s}) \ge b \quad \forall \bm{s}\in\mathcal S  \qquad
    &\Leftrightarrow \qquad  \bm{a}^\top \bm{x}_{RCF} + \min_{\bm{s}: \| \bm{s}\|_{N_2} \le \rho} \bm{a}^\top \bm{s} \ge b \\&
    \Leftrightarrow \qquad  \bm{a}^\top \bm{x}_{RCF} - \rho\|\bm{a}\|_{N_2}^* \ge b,
\end{align*}
where $\| \cdot \|_{N_2}^*$ is the dual norm of $\| \cdot \|_{N_2}$. The latter constraint is linear in the optimization variable $\bm{x}_{RCF}$. Since the objective function is convex, we know that every optimal solution $\bm{x}^*$ must fulfill the KKT-conditions
\begin{align*}
    & b - \bm{a}^\top \bm{x}^* + \rho\|\bm{a}\|_{N_2}^* \le 0,  \\
    &\bm{0} \in \partial_{\bm{x}}  \mathcal L(\bm{x}^*,\lambda^*), \\
    & \lambda^* (b - \bm{a}^\top \bm{x}^* + \rho\|\bm{a}\|_{N_2}^*) = 0, \\
    & \lambda^* \ge 0,
\end{align*}
where $\lambda^*$ is a dual optimal solution and $\mathcal L$ is the Lagrangian dual function
\[
\mathcal L(\bm{x},\lambda) = \| \bm{x}- \bm{x}_F\|_{N_1} + \lambda (b - \bm{a}^\top \bm{x} + \rho\|\bm{a}\|_{N_2}^*).
\]
We have $\partial_{\bm{x}}  \mathcal L(\bm{x},\lambda) = \partial f(\bm{x}-\bm{x}_F) - \lambda \bm{a}$. Note that since $\bm{x}_F \neq \bm{x}_{RCF}^*$, we have $\bm{0}\notin  \partial  f(\bm{x}_{RCF}^* - \bm{x}_F)$ and hence, $\lambda^*>0$ must hold. Applying this to the third KKT results in
\[
b - \bm{a}^\top \bm{x}^* +\rho \|\bm{a}\|_{N_2}^* = 0,
\]
which makes the first condition redundant. This proves the result.
\end{proof}

\begin{proof}[\cref{thm:RCF_diff}]
First, we query $\bm{x}_F$ such that we know the value of $q_F(\bm{x}_F)$. Then, we apply Lemma \ref{lem:optimality_conditions_RCF} to the case where the norm is differentiable. Then, there exists an $\lambda^*>0$ such that
\begin{align*}
    &\lambda^* \bm{a} = \nabla  f(\bm{x}_{RCF}^* - \bm{x}_F),\\
    &b=\bm{a}^\top \bm{x}_{RCF}^*+q_F(\bm{x}_F)\rho\|\bm{a}\|_{N_2}. 
\end{align*}
By defining $\hat{\bm{a}}=\nabla  f(\bm{x}_{RCF}^* - \bm{x}_F)$ and $\hat{b}=\hat{\bm{a}}^\top \bm{x}_{RCF}^*+q_F(\bm{x}_F)\rho\|\hat{\bm{a}}\|_{N_2}$ we have $\lambda^* \bm{a}=\hat{\bm{a}}$ and $\lambda^* b=\lambda^*\bm{a}^\top \bm{x}_{RCF}^*+\lambda^*q_F(\bm{x}_F)\rho\|\bm{a}\|_{N_2}=\hat{\bm{a}}^\top \bm{x}_{RCF}^*+q_F(\bm{x}_F)\rho\|\hat{\bm{a}}\|_{N_2}=\hat{b}$. Hence, we $\frac{1}{\lambda^*}(\hat{\bm{a}},\hat{b})=(\bm{a},b)$ for a $\lambda^*\neq 0$ and the hyperplane given by $\bm{a},b$ is equivalent to the original hyperplane with parameters $\bm{a},b$ which proves the result.
\end{proof}

\begin{proof}[Lemma \ref{lem:RCF_direction}]
    We note that $\bm{a}\neq\bm{0}$, hence $\|\bm{a}\|_{N_1}^*\neq\bm{0}$ and $d_{\bm{x}_F}$ is well defined. Besides, because $q_F(\bm{x}_F)\neq 0$ we have $d_{\bm{x}_F}>0$. Moreover, we have $\|\bm{a}\|_{N_1}^*:=\sup_{\|\bm{x}\|_{N_1}\leq 1}\bm{a}^{\top}\bm{x}$. Since the unit ball defined by $\|\bm{x}\|_{N_1}\leq 1$ is a compact space and the map $\bm{x} \mapsto \bm{a}^{\top}\bm{x}$ is continuous, we know the supremum is attained at a certain point $\bm{v}$ such that $\|\bm{v}\|_{N_1}\leq 1$ and $\bm{a}^{\top}\bm{v}=\|\bm{a}\|_{N_1}^*$.

    First, we consider a vector $\bm{v}$ with $\|\bm{v}\|_{N_1}\leq 1$ and $\bm{a}^{\top}\bm{v} = \|\bm{a}\|_{N_1}^*$ and show that $\bm{x}_{RCF}^*=\bm{x}_F+d_{\bm{x}_F}\bm{v}$ is an optimal robust counterfactual. We show the optimality conditions described in Corollary \ref{cor:optimality_conditions_CF_non_differentiable_norm} are met. For the first condition we have
    \begin{align*}
        \bm{a}^\top \bm{x}_{RCF}^* &= \bm{a}^\top (\bm{x}_F+d_{\bm{x}_F}\bm{v})= \bm{a}^\top\bm{x}_F +\left(\frac{b-\bm{a}^{\top}\bm{x}_F- q_F(\bm{x}_F)\rho\|\bm{a}\|_{N_2}^*}{\|\bm{a}\|_{N_1}^*} \right)\bm{a}^\top\bm{v}\\
        &= \bm{a}^\top\bm{x}_F +b-\bm{a}^{\top}\bm{x}_F - q_F(\bm{x}_F)\rho\|\bm{a}\|_{N_2}^*\\
        &=b - q_F(\bm{x}_F)\rho\|\bm{a}\|_{N_2}^*.
    \end{align*}
    The second and third conditions require a $\lambda\in\mathbb{R}$. Set $\lambda=\frac{\|\bm{v}\|_{N_1}}{\bm{a}^{\top}\bm{v}} \sgn(d_{\bm{x}_F})$. Then, the third condition is met:
    $$\|\lambda\bm{a}\|_{N_1}^*=|\lambda| \|\bm{a}\|_{N_1}^*=|\lambda| \bm{a}^{\top}\bm{v}
    = \frac{\|\bm{v}\|_{N_1}}{|\bm{a}^{\top}\bm{v}|} \bm{a}^{\top}\bm{v}\leq \|\bm{v}\|_{N_1}\leq 1.$$
    Lastly, the second optimality condition is also met:
    \begin{align*}
    \lambda \bm{a}^\top (\bm{x}_{RCF}^* - \bm{x}_F) &= \lambda  d_{\bm{x}_F}  \bm{a}^\top \bm{v}
    =\frac{\|\bm{v}\|_{N_1}}{\bm{a}^{\top}\bm{v}} \sgn(d_{\bm{x}_F})   d_{\bm{x}_F}   \bm{a}^\top \bm{v}\\
    &= |d_{\bm{x}_F}|  \|\bm{v}\|_{N_1} =\|d_{\bm{x}_F}  \bm{v}\|_{N_1}=\|\bm{x}_{RCF}^* - \bm{x}_F\|_{N_1} .
    \end{align*}
    We conclude that $\bm{x}_{RCF}^*=\bm{x}_F+d_{\bm{x}_F}\bm{v}$ is an optimal robust counterfactual.

    Second, we consider an optimal counterfactual $\bm{x}_{RCF}^*= \bm{x}_{F}+d_{\bm{x}_F}\bm{v}$ and show that $\bm{a}^{\top}\bm{v}=\|\bm{a}\|_{N_1}^*$ with $\|\bm{v}\|_{N_1}\leq 1$. Since $\bm{x}_{RCF}^*$ is an optimal counterfactual, Corollary \ref{cor:optimality_conditions_RCF_non_differentiable_norm} implies the corresponding optimality conditions are met. The first condition tells us
    \begin{align*}
        \bm{a}^\top \bm{x}_{RCF}^* &= \bm{a}^\top (\bm{x}_F+d_{\bm{x}_F}\bm{v})= \bm{a}^\top\bm{x}_F + \left(\frac{b-\bm{a}^{\top}\bm{x}_F - q_F(\bm{x}_F) \rho \|\bm{a}\|_{N_2}^*}{\|\bm{a}\|_{N_1}^*} \right) \bm{a}^\top \bm{v}\\
        &= b - q_F(\bm{x}_F) \rho \|\bm{a}\|_{N_2}^*.
    \end{align*}
    Rearranging the terms yields
    \begin{align*}
        \frac{\bm{a}^\top\bm{v}}{\|\bm{a}\|_{N_1}^*}&=\frac{b-\bm{a}^{\top}\bm{x}_F - q_F(\bm{x}_F) \rho \|\bm{a}\|_{N_2}^*}{b-\bm{a}^{\top}\bm{x}_F - q_F(\bm{x}_F) \rho \|\bm{a}\|_{N_2}^*}=1.
    \end{align*}
    Hence, we have $\bm{a}^\top\bm{v}=\|\bm{a}\|_{N_1}^*$. The second optimality condition tells us there exists a $\lambda\in\mathbb{R}$ such that the following relation holds.
    \begin{align*}
        \lambda \bm{a}^\top (\bm{x}_{RCF}^* - \bm{x}_F) =\lambda d_{\bm{x}_F}\bm{a}^\top\bm{v}=\|\bm{x}_{RCF}^* - \bm{x}_F\|_{N_1}=\|d_{\bm{x}_F}\bm{v}\|_{N_1}= |d_{\bm{x}_F}| \|\bm{v}\|_{N_1}\\
    \end{align*}
    Using the fact that $\bm{a}^\top\bm{v}=\|\bm{a}\|_{N_1}^*$, we can rearrange the terms to find
    \begin{align*}
        \lambda&=\sgn(d_{\bm{x}_F})\frac{\|\bm{v}\|_{N_1}}{\|\bm{a}\|_{N_1}^*}.
    \end{align*}
    The last optimality condition implies a bound on $\|\lambda \bm a\|^*\leq 1$.
    \begin{align*}
        \|\lambda \bm a\|_{N_1}^*& = |\lambda|   \|\bm{a}\|_{N_1}^* = |\lambda| \|\bm{a}\|_{N_1}^* = \frac{\|\bm{v}\|_{N_1}}{\|\bm{a}\|_{N_1}^*}\|\bm{a}\|_{N_1}^*=\|\bm{v}\|_{N_1}\leq 1.
    \end{align*}
    Therefore, it must hold that $\|\bm{v}\|_{N_1}\leq 1$ and $\bm{a}^{\top}\bm{v}=\|\bm{a}\|_{N_1}^*$.
\end{proof}

\begin{proof}[Lemma \ref{lem:perspective_point}]
Without loss of generality, assume that $\bm{x}_F$ is classified as \textit{`No'}. From Corollary \ref{cor:optimality_conditions_RCF_non_differentiable_norm} it follows that there exists a $\lambda\in\mathbb R\setminus \{ 0\}$ such that
\begin{align}
    &\bm{a}^\top \bm{x}_{RCF}^*  = b + \rho \|\bm{a}\|_{N_2}^*, \label{eq:proof_perspective_1}\\
    &  \lambda \bm{a}^\top \bm{v} = 1, \label{eq:proof_perspective_2}\\
    & \|\lambda \bm{a} \|_{N_1}^* \le 1 .\label{eq:proof_perspective_3}
\end{align}
Furthermore, from the second condition the definition of $\bm{v}$ and since $\bm{a}^\top \bm{x}_{RCF}^*\ge b \ge \bm{a}^\top \bm{x}_{F}$, it follows that $\lambda>0$ must hold. 
From the latter conditions and the equivalence of the norms, we obtain for the inner product of $\bm{a}$ and $\bar{\bm{x}}$
\begin{equation}\label{eq:proof_lemma_perspective_point}
\begin{aligned}
    \bm{a}^{\top}\bar{\bm{x}} & = \bm{a}^\top\left( \bm{x}_{RCF}^* - d\bm{v} \right)\\
    & = b+\rho\|\bm{a}\|_{N_2}^*- d\lambda^{-1} \\
    & \le  b+\rho C\|\bm{a}\|_{N_1}^*- d\lambda^{-1} \\
    & = b + \lambda^{-1}\left( \rho C -d\right),
    \end{aligned}
\end{equation}
where the second equality follows from \eqref{eq:proof_perspective_1} and \eqref{eq:proof_perspective_2}, the first inequality follows from the equivalence of the dual norms, and the last equality follows from \eqref{eq:proof_perspective_2} and \eqref{eq:proof_perspective_3} since $\lambda^{-1} = \bm{a}^\top \bm{v} \le \|\bm{a}\|_{N_1}^* \le \lambda^{-1}$. Hence for $d\ge \rho C$ we have $\bm{a}^{\top}\bar{\bm{x}}\le b$.
\end{proof}

\begin{proof}[Corollary \ref{cor:N1N2}]
The statements (i) and (ii) follow from applying Lemma \ref{lem:perspective_point} together with classical results for dual $\ell_p$-norms and equivalence between $\ell_p$-norms. Statement (iii) follows since the inequality in \eqref{eq:proof_lemma_perspective_point} is an equality with $C=1$, if $N_1=N_2$.
\end{proof}

\end{document}